\numberwithin{equation}{section}
\newtheorem{lma}{Lemma}[section]
\newaliascnt{thmCt}{lma}
\newtheorem{thm}[thmCt]{Theorem}
\newaliascnt{corCt}{lma}
\newaliascnt{prpCt}{lma}
\newtheorem{prp}[prpCt]{Proposition}
\newtheorem*{thm*}{Theorem}
\newtheorem*{cor*}{Corollary}
\newtheorem*{prp*}{Proposition}
\theoremstyle{definition}
\newaliascnt{pgrCt}{lma}
\newtheorem{pgr}[pgrCt]{}
\newaliascnt{dfnCt}{lma}
\newtheorem{dfn}[dfnCt]{Definition}
\newaliascnt{rmkCt}{lma}
\newtheorem{rmk}[rmkCt]{Remark}
\newaliascnt{rmksCt}{lma}
\newtheorem{rmks}[rmksCt]{Remarks}
\newaliascnt{qstCt}{lma}
\newaliascnt{pbmCt}{lma}
\newtheorem{pbm}[pbmCt]{Problem}
\newaliascnt{exaCt}{lma}
\newtheorem{exa}[exaCt]{Example}
\newaliascnt{exasCt}{lma}
\newaliascnt{conjCt}{lma}
\newaliascnt{ntnCt}{lma}
\newtheorem{ntn}[ntnCt]{Notation}
\newcommand{\NN}{\mathbb{N}}
\newcommand{\QQ}{\mathbb{Q}}
\newcommand{\RR}{\mathbb{R}}
\newcommand{\ZZ}{\mathbb{Z}}
\newcommand{\KK}{\mathcal{K}}
\newcommand{\pom}{positively ordered monoid}
\newcommand{\ca}{$C^*$-al\-ge\-bra}
\newcommand{\starHom}{${}^\ast$-ho\-mo\-mor\-phism}
\newcommand{\axiomO}[1]{(O#1)}
\newcommand{\freeVar}{\_\,}
\newcommand{\ihom}[1]{\llbracket #1 \rrbracket}
\newcommand{\cpc}[1]{\mathrm{cpc}_\perp\!( #1)}
\newcommand{\pathCu}[1]{\mathbf{#1}}
\newcommand{\txtSoft}{\text{soft}}
\newcommand{\nll}{\centernot\ll}
\newcommand{\stHom}{${}^*$-ho\-mo\-mor\-phism}
\newcommand{\CuSgp}{$\CatCu$-sem\-i\-group}
\newcommand{\CuMor}{$\CatCu$-mor\-phism}
\newcommand{\NNbar}{\overline{\mathbb{N}}}
\newcommand{\PPbar}{\overline{\mathbb{P}}}
\DeclareMathOperator{\Cu}{Cu}
\DeclareMathOperator{\id}{id}
\DeclareMathOperator{\Hom}{Hom}
\DeclareMathOperator{\counit}{e}
\newcommand{\CatPom}{\mathrm{PoM}}
\newcommand{\CatPomMor}{\CatPom}
\newcommand{\CatPomBimor}{\mathrm{Bi}\CatPom}
\newcommand{\CatCa}{C^*}
\newcommand{\CatCu}{\ensuremath{\mathrm{Cu}}}
\newcommand{\CatCuMor}{\CatCu}
\newcommand{\CatCuBimor}{\mathrm{Bi}\CatCu}
\newcommand{\CatP}{\mathcal{P}}
\newcommand{\CatPMor}{\CatP}
\newcommand{\CatQ}{\mathcal{Q}}
\newcommand{\CatQMor}{\CatQ}
\newcommand{\CatQBimor}{\mathrm{Bi}\CatQ}
\DeclareMathOperator{\Ext}{Ext}
\begin{document}

\title{Abstract bivariant Cuntz semigroups}
\author{Ramon Antoine}
\author{Francesc Perera}
\author{Hannes Thiel}

\date{\today}

\address{
Ramon~Antoine and Francesc~Perera,
Departament de Matem\`{a}tiques,
Universitat Aut\`{o}noma de Barcelona,
08193 Bellaterra, Barcelona, Spain}
\email[]{ramon@mat.uab.cat; perera@mat.uab.cat}

\address{
Hannes~Thiel,
Mathematisches Institut,
Universit\"at M\"unster,
Einsteinstrasse 62, 48149 M\"unster, Germany}
\email[]{hannes.thiel@uni-muenster.de}

\subjclass[2010]
{Primary
06B35, 
06F05, 
15A69, 
46L05. 
Secondary
06B30, 
06F25, 
13J25, 
16W80, 
16Y60, 
18B35, 
18D20, 
19K14, 
46L06, 
46M15, 
54F05. 
}

\keywords{Cuntz semigroup, tensor product, continuous poset, $C^*$-algebra}

\begin{abstract}
We show that abstract Cuntz semigroups form a closed symmetric monoidal category.
Thus, given Cuntz semigroups $S$ and $T$, there is another Cuntz semigroup $\ihom{S,T}$ playing the role of morphisms from $S$ to $T$.
Applied to \ca{s} $A$ and $B$, the semigroup $\ihom{\Cu(A),\Cu(B)}$ should be considered as the target in analogues of the UCT for bivariant theories of Cuntz semigroups.

Abstract bivariant Cuntz semigroups are computable in a number of interesting cases.
We also show that order-zero maps between \ca{s} naturally define elements in the respective bivariant Cuntz semigroup.
\end{abstract}

\maketitle

\section{Introduction}
\label{sec:intro}

The Cuntz semigroup $\Cu(A)$ of a \ca{} $A$ is an invariant that plays an important role in the structure theory of \ca{s} and the related Elliott classification program. It is defined analogously to the Murray-von Neumann semigroup, $V(A)$, by using equivalence classes of positive elements instead of projections;
see \cite{Cun78DimFct}.
In general, however, the semigroup $\Cu(A)$ contains much more information than $V(A)$, and it is therefore also more difficult to compute.

The Cuntz semigroup has been successfully used in classification results, both in the simple and nonsimple setting.
For example, Toms constructed two simple AH-algebras that have the same Elliott invariant, but which are not isomorphic, a fact that is captured by their Cuntz semigroups;
see \cite{Tom08ClassificationNuclear}.
On the other hand, Robert classified (not necessarily simple) inductive limits of one-dimensional NCCW-complexes with trivial $K_1$-groups using the Cuntz semigroup; see \cite{Rob12LimitsNCCW}.

The connection of $\Cu(A)$, for a \ca{} $A$, with the Elliott invariant of $A$ has been explored in a number of instances;
see for example \cite{PerTom07Recasting}, \cite{BroPerTom08CuElliottConj} and \cite{Tik11CuFunctions}.
In fact, for the class of simple, unital, nuclear \ca{s} that are $\mathcal{Z}$-stable (that is, that tensorially absorb the Jiang-Su algebra $\mathcal{Z}$), the Elliott invariant and the Cuntz semigroup together with $K_1$ determine one another functorially;
see \cite{AntDadPerSan14RecoverElliott}.
When dropping the assumption of $\mathcal{Z}$-stability, it is not known whether the pair consisting of the Elliott invariant and the Cuntz semigroup provides a complete invariant for classification of simple, unital, nuclear \ca{s}.

It is therefore very interesting to study the structural properties of $\Cu(A)$, for a \ca{} $A$.
This study was initiated by Coward, Elliott and Ivanescu in \cite{CowEllIva08CuInv}, who introduced a category $\CatCu$ and showed that the assignment $A\mapsto\Cu(A)$ is a sequentially continuous functor from \ca{s} to $\CatCu$.
The objects of $\CatCu$ are called \emph{abstract Cuntz semigroups} or \emph{$\CatCu$-semigroups}.
Working in this category allows one to provide elegant algebraic proofs for structural properties of \ca{s}.

A systematic study of the category $\CatCu$ was undertaken in \cite{AntPerThi14arX:TensorProdCu}.
One of the main results obtained is that $\CatCu$ is naturally a symmetric monoidal category (see \autoref{pgr:prelim:closedCat} for more details).
This means, in particular, that $\CatCu$ admits tensor products and that there is a bifunctor
\[
\otimes\colon\CatCu\times\CatCu\to\CatCu
\]
which is (up to natural isomorphisms) associative, symmetric, and has a unit object, namely the semigroup $\NNbar=\{0,1,2,\dots,\infty\}$.
The basic properties of this construction were studied in \cite{AntPerThi14arX:TensorProdCu}, relating in particular $\Cu(A\otimes B)$ with $\Cu(A)\otimes\Cu(B)$ for certain classes of \ca{s}.

An important motivation for our investigations here is to find an analogue of the universal coefficient theorem (UCT) for Cuntz semigroups.
Recall that a separable \ca{} $A$ is said to satisfy the UCT if for every separable \ca{} $B$ there is a short exact sequence
\[
0 \to \bigoplus_{i=0,1} \Ext\big( K_i(A), K_{1-i}(B) \big)
\to KK_0(A,B)
\to \bigoplus_{i=0,1} \Hom\big( K_i(A), K_i(B) \big)
\to 0.
\]
We refer to \cite[Chapter~23]{Bla98KThy} for details.

The goal is then to replace $KK_0(A,B)$ by a suitable bivariant version of the Cuntz semigroup (for example, along the lines of \cite{BosTorZac16arX:BivarThyCu}), and the $\Hom$-functor in the category of abelian groups by a suitable internal-hom functor in the category $\CatCu$.
In this direction, the construction developed in \cite{BosTorZac16arX:BivarThyCu} as a possible substitute for $KK_0(A,B)$ uses certain equivalence classes of completely positive contractive (abbreviated c.p.c.) order-zero maps between \ca{s}, denoted here as $\cpc{A,B}$.

The substitute of the $\Hom$-functor in the exact sequence above should be, if it exists, the adjoint to the tensor product functor alluded to above. It is thus a very natural question to determine whether the  category $\CatCu$ is, besides symmetric monoidal, also \emph{closed}.
This problem was left open in  \cite[Chapter~9]{AntPerThi14arX:TensorProdCu}.
More precisely, given \CuSgp{s} $S$ and $T$, the question asks if there exists a \CuSgp{} $\ihom{S,T}$ that plays the role of morphisms from $S$ to $T$, and such that the functor $\ihom{T,\freeVar}$ is adjoint to the functor $\freeVar\otimes T$. This means that, for any other \CuSgp{} $P$, we have a natural bijection
\[
\CatCuMor\big( S, \ihom{T,P} \big) \cong \CatCuMor\big( S\otimes T, P \big),
\]
where $\CatCuMor(\freeVar,\freeVar)$ denotes the set of morphisms in the category $\CatCu$.
The morphisms in $\CatCu$, also called \emph{\CuMor{s}}, are order-preserving monoid maps that preserve suprema of increasing sequences and that preserve the so-called  \emph{way-below relation};
see \autoref{dfn:prelim:CatCu}.
By functoriality, the natural model for $\CatCu$-morphisms consists of the \stHom{} between \ca{s}.

One of the main objectives of this paper is to construct the \CuSgp{} $\ihom{S,T}$ and to study its basic properties.
We call $\ihom{S,T}$ an \emph{abstract bivariant Cuntz semigroup} or a \emph{bivariant \CuSgp{}}.
The construction defines a bifunctor
\[
\ihom{\freeVar,\freeVar}\colon\CatCu\times\CatCu\to\CatCu,
\]
referred to as the \emph{internal-hom} bifunctor;
see, for example, \cite{Kel05EnrichedCat}.

The said construction resorts to the use of a more general class of maps than just \CuMor{s}.
A \emph{generalized \CuMor{}} is defined as an order-preserving monoid map that preserves suprema of increasing sequences (but not necessarily the way-below relation);
see \autoref{dfn:prelim:CatCu}. The natural model for generalized $\CatCu$-morphisms comes from order-zero maps between \ca{s}. We denote the set of such maps by $\CatCuMor[S,T]$.
Since every \CuMor{} is also a generalized \CuMor, we have an inclusion $\CatCuMor(S,T)\subseteq\CatCuMor[S,T]$.

When equipped with pointwise order and addition, $\CatCuMor[S,T]$ has a natural structure as a partially ordered monoid, but it is in general not a \CuSgp.
Similarly, $\CatCu(S,T)$ is usually not a \CuSgp{}.
The solution is to consider \emph{paths} in $\CatCuMor[S,T]$, that is, rationally indexed maps $\QQ\cap(0,1)\to\CatCu[S,T]$ that are `rapidly increasing' in a certain sense.
Equipped with a suitable equivalence relation, these paths define the desired \CuSgp{} $\ihom{S,T}$.

This procedure can be carried out in a much more general setting.
In \autoref{sec:Q} we introduce a category $\CatQ$ of partially ordered semigroups that, roughly speaking, is a weakening of the category $\CatCu$, in that the way-below relation is replaced by a possibly different binary relation (called \emph{auxiliary relation}).
We show that $\CatCu$ is a full subcategory of $\CatQ$;
see \autoref{prp:Q:fullCuQ}.
The path construction we have delineated above yields a covariant functor
\[
\tau\colon\CatQ\to\CatCu,
\]
that turns out to be right adjoint to the natural inclusion functor from $\CatCu$ into $\CatQ$;
see \autoref{thm:Q:coreflection}.
We refer to this functor as the \emph{$\tau$-construction}.
In \cite{AntPerThi17pre:Productscoproducs} we show that the $\tau$-construction can also be used to compute the Cuntz semigroup of ultraproduct \ca{s}.
In our setting, the functor $\tau$ applied to the semigroup of generalized \CuMor{s} $\CatCu[S,T]$ yields the internal-hom of $S$ and $T$;
see \autoref{dfn:bivarCu:ihom}.
In other words, for \CuSgp{s} $S$ and $T$, we define
\[
\ihom{S,T} := \tau(\CatCu[S,T]).
\]

We illustrate our results by computing a number of examples, that include the (Cuntz semigroups of the) Jiang-Su algebra $\mathcal{Z}$, the Jacelon-Razak algebra $\mathcal W$, UHF-algebras of infinite type, and purely infinite simple \ca{s}.
Interestingly, $\ihom{\Cu(\mathcal W),\Cu(\mathcal{W})}$ is isomorphic to the Cuntz semigroup of a II$_1$-factor.

The fact that $\CatCu$ is a closed category automatically adds additional features well known to category theory.
For example, one obtains a \emph{composition product} given in the form of a $\CatCu$-morphism:
\[
\circ\colon\ihom{T,P}\otimes\ihom{S,T}\to\ihom{S,P}.
\]
In particular, this product equips $\ihom{S,S}$ with the structure of a (not necessarily commutative) $\CatCu$-semiring.
These structures will be further analysed in a subsequent paper; see \cite{AntPerThi17pre:AbsbivarII}.

Finally, we specialise to \ca{s} and show that a c.p.c.\ order-zero map $\varphi\colon A\to B$ between \ca{s} $A$ and $B$ naturally defines an element $\Cu(\varphi)$ in the bivariant \CuSgp{} $\ihom{\Cu(A),\Cu(B)}$;
see \autoref{prp:appl:oz:oz_ind_ihom} and \autoref{dfn:appl:oz:oz_ind_ihom}.
We then analyse the induced map
\[
\cpc{A,B}\to\ihom{\Cu(A),\Cu(B)},
\]
and show it is surjective in a number of cases; namely for a UHF-algebra of infinite type, the Jiang-Su algebra, or the Jacelon-Razak algebra $\mathcal{W}$; see \autoref{exa:appl:cpcToIhom}.

\section*{Acknowledgements}

This work was initiated during a research in pairs (RiP) stay at the Oberwolfach Research Institute for Mathematics (MFO) in March 2015.
The authors would like to thank the MFO for financial support and for providing inspiring working conditions.

Part of this research was conducted while the third named author was visiting the Universitat Aut\`{o}noma de Barcelona (UAB) in September 2015 and June 2016, and while the first and second named authors visited M\"unster Universit\"at in June 2015 and 2016. Part of the work was also completed while the second and third named authors were attending the Mittag-Leffler institute during the 2016 program on Classification of Operator Algebras: Complexity, Rigidity, and Dynamics.
They would like to thank all the involved institutions for their kind hospitality.

The two first named authors were partially supported by DGI MICIIN (grant No.\ MTM2011-28992-C02-01), by MINECO (grant No.\ MTM2014-53644-P), and by the Comissionat per Universitats i Recerca de la Generalitat de Catalunya.
The third named author was partially supported by the Deutsche Forschungsgemeinschaft (SFB 878).

\section{Preliminaries}
\label{sec:prelim}

Throughout, $\KK$ denotes the \ca{} of compact operators on a separable, infinite-dimensional Hilbert space.
Given a \ca{} $A$, we let $A_+$ denote the positive elements in $A$.

\subsection{The category \texorpdfstring{$\CatCu$}{Cu} of abstract Cuntz semigroups}
\label{sec:prelim:CatCu}

In this subsection, we recall the definition of the category $\CatCu$ of abstract Cuntz semigroups.

\begin{pgr}
\label{pgr:prelim:CatPom}
We first recall the basic theory of the category $\CatPom$ of \pom{s};
see \cite[Appendix~B.2]{AntPerThi14arX:TensorProdCu} for details.
A \emph{\pom} is a commutative monoid $M$ together with a partial order $\leq$ such that $a\leq b$ implies that $a+c\leq b+c$ for all $a,b,c\in M$, and such that $0\leq a$ for all $a\in M$.
We let $\CatPom$ denote the category whose objects are \pom{s}, and whose morphisms are maps preserving addition, order and the zero element.

Let $M,N$ and $P$ be \pom{s}.
We denote the set of $\CatPom$-morphisms from $M$ to $N$ by $\CatPomMor(M,N)$.
A map $\varphi\colon M\times N\to P$ is called a \emph{$\CatPom$-bimorphism} if it is a $\CatPom$-morphism in each variable.
We denote the collection of such maps by $\CatPomBimor(M\times N,P)$.
We equip both $\CatPomMor(M,N)$ and $\CatPomBimor(M\times N,P)$ with pointwise order and addition, which gives them a natural structure as \pom{s}.

Given \pom{s} $M$ and $N$, there exists a \pom{} $M\otimes_{\CatPom}N$ and a $\CatPom$-bimorphism $\omega\colon M\times N\to M\otimes_{\CatPom}N$ with the following universal property:
For every $P$, mapping a $\CatPom$-morphism $\alpha\colon M\otimes_{\CatPom}N\to P$ to the $\CatPom$-bimorphism $\alpha\circ\omega\colon M\times N\to P$ defines a natural bijection
\[
\CatPomMor \big( M\otimes_{\CatPom}N, P \big) \cong \CatPomBimor \big( M\times N, P \big),
\]
which moreover respects the structure of the (bi)morphism sets as \pom{s}.
We call $M\otimes_{\CatPom}N$ together with $\omega$ the \emph{tensor product} of $M$ and $N$. 
\end{pgr}

Recall that a set $\Lambda$ with a binary relation $\prec$ is called \emph{upward directed} if for all $\lambda_1,\lambda_2\in\Lambda$ there exists $\lambda\in\Lambda$ with $\lambda_1,\lambda_2\prec\lambda$.
Following \cite[Definition~I-1.11, p.57]{GieHof+03Domains}, we define auxiliary relations on partially ordered sets and monoids:

\begin{dfn}
\label{dfn:prelim:auxRel}
Let $X$ be a partially ordered set.
An \emph{auxiliary relation} on $X$ is a binary relation $\prec$ on $X$ satisfying the following conditions for all $x,x',y,y'\in X$:
\begin{enumerate}
\item
If $x\prec y$ then $x\leq y$.
\item
If $x'\leq x\prec y\leq y'$ then $x'\prec y'$.
\end{enumerate}
If $X$ is also a monoid, then an auxiliary relation $\prec$ on $X$ is said to be \emph{additive} if $0\prec x$ for all $x\in X$ and if for all $x_1,x_2,y_1,y_2\in X$ with $x_1\prec y_1$ and $x_2\prec y_2$ we have $x_1+x_2\prec y_1+y_2$.
\end{dfn}

An important example of an auxiliary relation is the so called \emph{way-below relation}, which has its origins in domain theory (see \cite{GieHof+03Domains}).
We recall below its sequential version, which is the one used to define abstract Cuntz semigroups.

\begin{dfn}
\label{dfn:prelim:waybelow}
Let $X$ be a partially ordered set, and let $x,y\in X$.
We say that $x$ is \emph{way-below} $y$, or that $x$ is \emph{compactly contained in} $y$, in symbols $x\ll y$, if whenever $(z_n)_n$ is an increasing sequence in $X$ for which the supremum exists and which satisfies $y\leq \sup_n z_n$, then there exists $k\in\NN$ with $x\leq z_k$.
We say that $x$ is \emph{compact} if $x\ll x$.
We let $X_c$ denote the set of compact elements in $X$.
\end{dfn}

The following definition is due to Coward, Elliott and Ivanescu in \cite{CowEllIva08CuInv}.
See also \cite[Definition~3.1.2]{AntPerThi14arX:TensorProdCu}.

\begin{dfn}
\label{dfn:prelim:CatCu}
A \emph{$\CatCu$-semigroup}, also called \emph{abstract Cuntz semigroup}, is a positively ordered semigroup $S$ that satisfies the following axioms \axiomO{1}-\axiomO{4}:
\begin{itemize}
\item[\axiomO{1}]
Every increasing sequence $(a_n)_n$ in $S$ has a supremum $\sup_n a_n$ in $S$.
\item[\axiomO{2}]
For every element $a\in S$ there exists a sequence $(a_n)_n$ in $S$ with $a_n\ll a_{n+1}$ for all $n\in\NN$, and such that $a=\sup_n a_n$.
\item[\axiomO{3}]
If $a'\ll a$ and $b'\ll b$ for $a',b',a,b\in S$, then $a'+b'\ll a+b$.
\item[\axiomO{4}]
If $(a_n)_n$ and $(b_n)_n$ are increasing sequences in $S$, then $\sup_n(a_n+b_n)=\sup_n a_n+\sup_n b_n$.
\end{itemize}

Given $\CatCu$-semigroups $S$ and $T$, a \emph{$\CatCu$-morphism} is a map $f\colon S\to T$ that preserves addition, order, the zero element, the way-below relation and suprema of increasing sequences.
A \emph{generalized $\CatCu$-morphism} is a $\CatCu$-morphism that is not required to preserve the way-below relation.
We denote the set of $\CatCu$-morphisms by $\CatCuMor(S,T)$;
and we denote the set of generalized $\CatCu$-morphisms by $\CatCuMor[S,T]$.

We let $\CatCu$ be the category whose objects are $\CatCu$-semigroups and whose morphisms are $\CatCu$-morphisms.
\end{dfn}

\begin{rmk}
\label{rmk:prelim:CatCu}
Let $S$ be a $\CatCu$-semigroup.
Note that $0\ll a$ for all $a\in S$.
Thus, \axiomO{3} ensures that $\ll$ is an additive auxiliary relation on $S$.
\end{rmk}

\begin{pgr}
Let $A$ be a \ca{}, and let $a,b\in(A\otimes\KK)_+$.
We say that $a$ is \emph{Cuntz subequivalent} to $b$, denoted $a\precsim b$, if there is a sequence $(x_n)_n$ in $A\otimes \KK$ such that $a=\lim_n x_nbx_n^*$.
We say that $a$ and $b$ are \emph{Cuntz equivalent}, written $a\sim b$, provided $a\precsim b$ and $b\precsim a$.
The set of equivalence classes $\Cu(A) :=(A\otimes\KK)_+/\!\!\sim$ is called the (completed) Cuntz semigroup of $A$.
One defines an addition on $\Cu(A)$ by setting $[a]+[b]:=\left[\left(\begin{smallmatrix}a & 0\\ 0 & b\end{smallmatrix}\right)\right]$ for $a,b\in(A\otimes\KK)_+$.
(One uses that there is an isomorphism $M_2(\KK)\cong\KK$, and that the definition does not depend on the choice of isomorphism.)
The class of $0\in (A\otimes\KK)_+$ is a zero element for $\Cu(A)$. 
One defines an order on $\Cu(A)$ by setting $[a]\leq [b]$ whenever $a\precsim b$.
This gives $\Cu(A)$ the structure of a \pom.
\end{pgr}

\begin{thm}[\cite{CowEllIva08CuInv}]
\label{prp:prelim:functorCu}
For every \ca{} $A$, the \pom{} $\Cu(A)$ is a $\CatCu$-semigroup.
Furthermore, if $B$ is another \ca{}, then a \stHom{} $\varphi\colon A\to B$ induces a $\CatCu$-morphism $\Cu(\varphi)\colon\Cu(A)\to\Cu(B)$ by
\[
\Cu(\varphi)([a]) := [\varphi(a)],
\]
for $a\in(A\otimes\KK)_+$.
This defines a functor from the category of \ca{s} with \starHom{s} to the category $\CatCu$.
\end{thm}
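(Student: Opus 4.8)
The plan is to reduce everything to the standard comparison theory for positive elements. The essential tool is the family of cut-downs $(a-\varepsilon)_+:=f_\varepsilon(a)$, defined by functional calculus with $f_\varepsilon(t)=\max(t-\varepsilon,0)$, for which I would record (or cite, following R{\o}rdam and Kirchberg--R{\o}rdam) three facts valid for $a,b\in(A\otimes\KK)_+$: (i) $(a-\varepsilon)_+\precsim a$ and $((a-\varepsilon)_+-\delta)_+=(a-\varepsilon-\delta)_+$; (ii) $a\precsim b$ holds if and only if for every $\varepsilon>0$ there is $\delta>0$ with $(a-\varepsilon)_+\precsim(b-\delta)_+$; and (iii) if $\|a-b\|<\varepsilon$ then $(a-\varepsilon)_+\precsim b$. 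From (ii) alone one checks that $[a]=\sup_n[(a-1/n)_+]$ for every $a$, and hence that $[a']\ll[a]$ implies $a'\precsim(a-1/n)_+$ for some $n$, since the classes $[(a-1/n)_+]$ form an increasing sequence with supremum $[a]$.

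The main obstacle is \axiomO{1}. Given a Cuntz-increasing sequence $[a_1]\le[a_2]\le\cdots$, I would, after passing to a subsequence (which leaves the supremum unchanged), use (ii), (iii), and the stability of $A\otimes\KK$ to construct positive representatives $c_n$ with $[c_n]=[a_n]$ forming a norm-Cauchy sequence. Turning the abstract subequivalences $[a_n]\le[a_{n+1}]$ into an honestly norm-convergent sequence of representatives — by spreading the elements out orthogonally inside the stable algebra and applying R{\o}rdam's lemma at each inductive step — is where all the analytic work lies. Writing $c:=\lim_n c_n$, fact (iii) gives $(c_n-\varepsilon)_+\precsim c$ and $(c-\varepsilon)_+\precsim c_n$ for $n$ large, whence $[c_n]\le[c]$ for all $n$ and, whenever $[a_n]\le[z]$ for all $n$, also $[(c-\varepsilon)_+]\le[z]$ for every $\varepsilon$; since $[c]=\sup_\varepsilon[(c-\varepsilon)_+]$ by (ii), this shows $[c]=\sup_n[a_n]$. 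With \axiomO{1} in hand, the converse implication in the way-below characterization follows: if $a'\precsim(a-\varepsilon)_+$ then $[a']\ll[a]$, using that any supremum is now realized by a norm-convergent sequence of representatives together with (iii). Thus $[a']\ll[a]$ if and only if $a'\precsim(a-\varepsilon)_+$ for some $\varepsilon>0$.

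Axioms \axiomO{2}, \axiomO{3} and \axiomO{4} then follow formally. For \axiomO{2}, the identity in (i) gives $(a-1/n)_+=((a-1/(n+1))_+-\eta)_+$, so $[(a-1/n)_+]\ll[(a-1/(n+1))_+]$ by the characterization, and their supremum is $[a]$. For \axiomO{3}, if $[a']\ll[a]$ and $[b']\ll[b]$ then $a'\precsim(a-\varepsilon)_+$ and $b'\precsim(b-\delta)_+$; with $\eta\le\min(\varepsilon,\delta)$ one has $(a\oplus b-\eta)_+=(a-\eta)_+\oplus(b-\eta)_+$, whence $a'\oplus b'\precsim(a\oplus b-\eta)_+$ and so $[a']+[b']\ll[a]+[b]$. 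For \axiomO{4}, realize $\sup_n[a_n]=[a]$ and $\sup_n[b_n]=[b]$ by \axiomO{1}; then $[a]+[b]=[a\oplus b]=\sup_\varepsilon\big([(a-\varepsilon)_+]+[(b-\varepsilon)_+]\big)$, and since each cut-down is way-below $[a]$ respectively $[b]$ it is dominated by some $[a_n]$ respectively $[b_{n'}]$, so $[(a-\varepsilon)_+]+[(b-\varepsilon)_+]\le[a_k]+[b_k]$ with $k=\max(n,n')$; taking suprema gives $\sup_n[a_n]+\sup_n[b_n]\le\sup_n([a_n]+[b_n])$, and the reverse inequality is immediate from monotonicity of addition.

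For the functor, extend $\varphi$ to $\varphi\otimes\id_\KK\colon A\otimes\KK\to B\otimes\KK$. Applying it to a defining limit $a=\lim_n x_nbx_n^*$ shows that $a\precsim b$ implies $\varphi(a)\precsim\varphi(b)$, so $\Cu(\varphi)$ is well defined, order-preserving and additive (using $\varphi(a\oplus b)=\varphi(a)\oplus\varphi(b)$), and preserves $0$. Preservation of $\ll$ uses $\varphi(f_\varepsilon(a))=f_\varepsilon(\varphi(a))$: from $a'\precsim(a-\varepsilon)_+$ one gets $\varphi(a')\precsim(\varphi(a)-\varepsilon)_+$, hence $[\varphi(a')]\ll[\varphi(a)]$ by the characterization together with property~(2) of auxiliary relations. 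For suprema, realizing $\sup_n[a_n]=[a]$ and using $[\varphi(a)]=\sup_\varepsilon[\varphi((a-\varepsilon)_+)]$ with $[(a-\varepsilon)_+]\le[a_n]$ for large $n$ gives $[\varphi(a)]=\sup_n[\varphi(a_n)]$. Finally $\Cu(\id)=\id$ and $\Cu(\psi\circ\varphi)=\Cu(\psi)\circ\Cu(\varphi)$ hold on representatives, establishing functoriality.
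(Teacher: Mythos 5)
This theorem carries no proof in the paper: it is stated as a quotation from Coward--Elliott--Ivanescu \cite{CowEllIva08CuInv}, so there is no internal argument to compare against, and your proposal has to be measured against the standard literature proof, which is indeed the route you take. At the level of detail you give, the formal skeleton is correct: facts (i)--(iii) are the usual R{\o}rdam cut-down lemmas; the pivot characterization $[a']\ll[a]$ if and only if $a'\precsim(a-\varepsilon)_+$ for some $\varepsilon>0$ is the right one; and your derivations of \axiomO{2}, \axiomO{3}, \axiomO{4}, as well as the functoriality claims (well-definedness, preservation of $\ll$ via $\varphi(f_\varepsilon(a))=f_\varepsilon(\varphi(a))$, preservation of suprema via cut-downs), are all sound consequences of that characterization once \axiomO{1} is in place.

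The one substantive incompleteness sits exactly where you flag it, in \axiomO{1}, and I would make two concrete points about it. First, the standard construction does not produce a norm-Cauchy sequence of representatives with $[c_n]=[a_n]$ exactly; after passing to a subsequence one only arranges $c_n\sim(a_n-\varepsilon_n)_+$ with $\varepsilon_n\to0$, which is weaker but suffices, provided the subsequence is chosen adaptively so that $\sup_n[(a_n-\varepsilon_n)_+]=\sup_n[a_n]$. Second, your step ``fact (iii) gives \dots\ whence $[c_n]\le[c]$'' does not follow from (iii) applied to the limit alone: for fixed $n$ and $\varepsilon$, fact (ii) yields some $\delta>0$ with $(c_n-\varepsilon)_+\precsim(c_m-\delta)_+$, but nothing guarantees $\delta>\Vert c_m-c\Vert$, so the Cauchy rate must be interlocked with the cut-down parameters during the inductive construction (e.g.\ arranging $\Vert c_{m+1}-c_m\Vert<2^{-m}$ together with hereditary containments at matched levels). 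This interlocking, plus the cofinality property your converse way-below characterization relies on, is precisely the analytic content of the Coward--Elliott--Ivanescu theorem; so your proposal is best read as a correct reduction of the statement to that known construction rather than a self-contained proof, which is an entirely reasonable treatment of a cited preliminary.
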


\begin{rmk}
Let $A$ be a \ca{}.
In order to show that \axiomO{2} holds for $\Cu(A)$ one proves that, for every $a\in (A\otimes K)_+$ and $\varepsilon>0$ we have $[(a-\varepsilon)_+]\ll [a]$, and that moreover $[a]=\sup_{\varepsilon>0} [(a-\varepsilon)_+]$.
One can then derive from this that the sequence $([(a-1/n)_+])_n$ satisfies the required properties in \axiomO{2}.

This suggests the possibility of formally strengthening \axiomO{2} for every \CuSgp{} $S$ in the following way:
Given $a\in S$, there exists a $(0,1)$-indexed chain of elements $(a_\lambda)_{\lambda\in (0,1)}$ with the property that $a=\sup_{\lambda}a_\lambda$, and $a_{\lambda'}\ll a_{\lambda}$ whenever $\lambda'<\lambda$.
Next, we show that this property holds for all \CuSgp{s}.
\end{rmk}

\begin{lma}
\label{prp:prelim:ctsO2starCondition}
Let $S$ be a set equipped with a transitive binary relation $\prec$ that satisfies the following condition:
\begin{enumerate}
\item[(*)]
For each $a\in S$ there exists a sequence $(a_n)_n$ in $S$ such that $a_n\prec a_{n+1}\prec a$ for all $n$;
and such that whenever $a'\in S$ satisfies $a'\prec a$ then there exists $n_0$ with $a'\prec a_{n_0}$.
\end{enumerate}
Then, for every $a\in S$, there exists a chain $(a_{\lambda})_{\lambda\in (0,1)\cap \QQ}$ such that $a_{\lambda'}\prec a_\lambda$ whenever $\lambda',\lambda\in(0,1)\cap\QQ$ satisfy $\lambda'<\lambda$, and such that for every $a'\in S$ with $a'\prec a$ there exists $\mu\in (0,1)\cap\QQ$ with $a'\prec a_\mu$.
\end{lma}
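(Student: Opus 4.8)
The plan is to distill an \emph{interpolation property} from hypothesis (*), to use it to produce, between any two $\prec$-comparable elements, a dense chain indexed by $\QQ\cap(0,1)$, and finally to glue countably many such dense chains along the distinguished sequence furnished by (*). A pleasant feature is that the cofinality clause costs almost nothing: the sequence in (*) is by design what detects the elements lying $\prec a$, so once it is used as a skeleton the final requirement is immediate. Concretely, I would first record interpolation: if $x\prec y$, then there is $z$ with $x\prec z\prec y$. Applying (*) to $y$ gives a sequence $(c_k)_k$ with $c_k\prec c_{k+1}\prec y$ and with the property that any element $\prec y$ lies $\prec c_{k_0}$ for some $k_0$; taking this for $x$ yields $x\prec c_{k_0}$, and then $z:=c_{k_0}$ works because $c_{k_0}\prec c_{k_0+1}\prec y$ forces $z\prec y$ by transitivity.

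The technical heart is the building block: given $x\prec y$, construct a chain $(z_q)_{q\in\QQ\cap(0,1)}$ with $x\prec z_q\prec y$ for all $q$ and with $z_{q'}\prec z_q$ whenever $q'<q$. I would enumerate $\QQ\cap(0,1)$ and place its elements one at a time, maintaining the invariant that the values assigned so far form a strict $\prec$-chain lying strictly between $x$ and $y$ and ordered compatibly with their indices. To insert a new rational, locate its immediate predecessor and successor among the finitely many rationals already placed, using $x$ and $y$ as virtual endpoints when no actual predecessor or successor exists, and apply interpolation to the corresponding pair of values (which is $\prec$-comparable by the invariant); transitivity then restores the invariant, including the bounds $x\prec z_q$ and $z_q\prec y$.

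For the assembly, fix $a$ and let $(a_n)_n$ be the sequence from (*). Choose rationals $\mu_1<\mu_2<\cdots$ in $(0,1)$ with $\mu_n\to 1$ and set $a_{\mu_n}:=a_n$; since $n'<n$ gives $a_{n'}\prec a_n$ by transitivity, the anchors form a strict $\prec$-chain compatibly indexed. By (*) applied to $a_1$ there is $d\prec a_1$. Using the building block on the pair $d\prec a_1$ and on each pair $a_n\prec a_{n+1}$, together with the order isomorphisms $\QQ\cap(0,\mu_1)\cong\QQ\cap(0,1)\cong\QQ\cap(\mu_n,\mu_{n+1})$, I would fill the rationals below $\mu_1$ and inside each $(\mu_n,\mu_{n+1})$ with the transported dense chains. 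Within each interval the chain condition is built in; across intervals it follows by transitivity through the anchors $a_{\mu_n}=a_n$, since every value assigned in $(\mu_{n-1},\mu_n)$ is $\prec a_n$ and every value in $(\mu_n,\mu_{n+1})$ has $a_n\prec$ it. Finally, for cofinality, given $a'\prec a$, property (*) hands us $n_0$ with $a'\prec a_{n_0}=a_{\mu_{n_0}}$, so $\mu:=\mu_{n_0}$ satisfies the requirement.

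I expect the main obstacle to be the recursion in the building block, specifically verifying that the interpolation invariant survives every insertion when the new rational has no predecessor or no successor among the elements already placed, and confirming that transitivity keeps all assigned values strictly between $x$ and $y$ throughout; the remaining steps are bookkeeping with transitivity, and the cofinality clause is essentially read off from (*).
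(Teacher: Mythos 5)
Your proof is correct, but it takes a genuinely different route from the paper's. The paper first extracts from (*) a \emph{two-element} interpolation property (if $b_1,b_2\prec b$ then there is $b_3$ with $b_1,b_2\prec b_3\prec b$), then builds a single nested family of dyadic chains $0\prec a_1^{(n)}\prec\dots\prec a_{2^n-1}^{(n)}\prec a$, threading the cofinal sequence from (*) into the top of each refinement level via the extra requirement $a_n\prec a_{2^n-1}^{(n)}$, and finally re-indexes the dyadically indexed family by $\QQ\cap(0,1)$ by invoking Cantor's theorem that a countable dense total order without endpoints is order-isomorphic to the rationals in $(0,1)$. You instead need only \emph{binary} interpolation ($x\prec y$ yields $z$ with $x\prec z\prec y$, which you derive correctly from (*) applied to $y$), and you localize the construction: the one-rational-at-a-time insertion in your building block is, in effect, a hands-on proof of exactly the instance of Cantor's theorem the paper uses as a black box, and the gluing along the skeleton $a_{\mu_n}:=a_n$ with rational anchors $\mu_n\to 1$ is sound --- the transported re-indexings are explicit (e.g.\ $q\mapsto\mu_n+q(\mu_{n+1}-\mu_n)$ preserves rationality since the $\mu_n$ are rational), the cross-interval comparisons follow by transitivity through the anchors as you say, and your treatment of the leftmost block via some $d\prec a_1$ is legitimate because (*) applied to $a_1$ supplies such a $d$. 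Your decomposition buys two things: the cofinality clause is read off directly from (*) rather than being maintained through an induction, and the two-element interpolation property is never needed; the paper's version buys compactness, since a single refinement induction concentrates all the combinatorics and the order-isomorphism is delegated to a classical theorem. Both arguments are complete.
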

\begin{proof}
Note that condition (*) implies the following:
Whenever $b_1,b_2,b\in S$ satisfy $b_1,b_2\prec b$, then there exists $b_3\in S$ with $b_1,b_2\prec b_3\prec b$.
This property, which we will refer to as the interpolation property, will be used throughout.

Given $a\in S$, first use (*) to fix an increasing sequence $0\prec a_1\prec a_2\prec \dots \prec a$ which is cofinal in $a^\prec:=\{b \mid b\prec a\}$.
(This means that, if $a'\in S$ satisfies $a'\prec a$, then there is $k\in\NN$ with $a'\prec a_k$.)
Use the interpolation property to find $a_1^{(1)}$ such that $a_1\prec a_1^{(1)}\prec a$ and consider the chain $0\prec a_1^{(1)}\prec a$.
Now use the interpolation property to refine the above chain as
\[
\begin{array}{rcccl}
0 & \prec & a_1^{(1)} & \prec & a \\
\rotatebox[origin=c]{90}{=} & & \rotatebox[origin=c]{90}{=} & & \rotatebox[origin=c]{90}{=} \\ 0 &\prec a_1^{(2)} \prec & a_2^{(2)} & \prec a_3^{(2)}\prec & a \,,
\end{array}\]
in such a way that moreover $a_2\prec a_3^{(2)}$.
We now proceed inductively, and thus suppose we have constructed a chain $0\prec a_1^{(n)}\prec \dots\prec a_{2^n-1}^{(n)}\prec a$ with $a_n\prec a_{2^n-1}^{(n)}$.
Use the interpolation property to construct a new chain
\[
0\prec a_1^{(n+1)}\prec \dots \prec a_{2^{n+1}-1}^{(n+1)}\prec a
\]
such that
\[
0\prec a_1^{(n+1)}\prec a_1^{(n)},\quad
a_{i}^{(n)}\prec a_{2i+1}^{(n+1)}\prec a_{i+1}^{(n)},\quad
a_{2i}^{(n+1)}=a_{i}^{(n)},\quad
a_{2^n-1}^{(n)}\prec a_{2^{n+1}-1}^{(n+1)}\prec a,
\]
and such that moreover $a_{n+1}\prec a_{2^{n+1}-1}^{(n+1)}$.
This latter condition will ensure that the set of elements thus constructed is cofinal in $a^\prec$.

The index set $I:=\{(n,i)\mid 1\leq n, 1\leq i\leq 2^n-1\}$ can be totally ordered by setting $(n,i)\leq (m,j)$ provided $i2^{-n}\leq j2^{-m}$.
It now follows from the construction above that $a^{(n)}_i\prec a^{(m)}_j$ whenever $(n,i)\leq (m,j)$.

The set $I$ is order-isomorphic to the dyadic rationals in $(0,1)$.
In fact, $I$ is a countably infinite, totally ordered, dense set with no minimal nor maximal element.
(Here, dense means that whenever $x<y$ in $I$ there exists $z\in I$ with $x<z<y$.)
By a classical result of G. Cantor (see, for example, \cite[Theorem~27]{Roi90}), there is only one such set, up to order-isomorphism.
We can therefore choose an order-isomorphism $\psi\colon I\to (0,1)\cap \QQ$ and set $a_\lambda=a^{(n)}_i$ whenever $\psi((n,i))=\lambda$.
\end{proof}

The proof of the following proposition is (essentially) included in \cite[Proposition IV-3.1]{GieHof+03Domains}.

\begin{prp}
\label{prp:prelim:ctsO2}
Let $S$ be a \CuSgp, and let $a\in S$.
Then, there exists a family $(a_\lambda)_{\lambda\in(0,1]}$ in $S$ with $a_1=a$;
such that $a_{\lambda'}\ll a_\lambda$ whenever $\lambda',\lambda\in(0,1]$ satisfy $\lambda'<\lambda$;
and such that $a_\lambda=\sup_{\lambda'<\lambda} a_{\lambda'}$ for every $\lambda\in(0,1]$.
\end{prp}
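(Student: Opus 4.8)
The plan is to deduce this from \autoref{prp:prelim:ctsO2starCondition} applied to the way-below relation $\ll$ on $S$, and then to interpolate a real-indexed family between the rational-indexed chain that the lemma produces. First I would check that $\ll$ (in the role of $\prec$) satisfies the hypotheses of the lemma. Transitivity is routine: if $x\ll y\ll z$ and $(z_n)_n$ is increasing with $z\leq\sup_n z_n$, then $y\leq z_k$ for some $k$, and feeding the constant sequence $(z_k)_n$ into $x\ll y$ yields $x\leq z_k$, so $x\ll z$. For condition~(*), fix $a\in S$ and take the sequence $(a_n)_n$ provided by \axiomO{2}, so that $a_n\ll a_{n+1}$ and $a=\sup_n a_n$. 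Since $a_{n+1}\ll a_{n+2}\leq a$, property~(2) of an auxiliary relation (\autoref{dfn:prelim:auxRel}) gives $a_{n+1}\ll a$, so the chain has the form $a_n\ll a_{n+1}\ll a$. For the cofinality clause, if $a'\ll a=\sup_n a_n$ then $a'\leq a_k$ for some $k$, and $a'\leq a_k\ll a_{k+1}$ gives $a'\ll a_{k+1}$; this is exactly the strengthening from $\leq$ to $\ll$ that (*) demands. Thus \autoref{prp:prelim:ctsO2starCondition} produces a chain $(b_\mu)_{\mu\in(0,1)\cap\QQ}$ with $b_{\mu'}\ll b_\mu$ whenever $\mu'<\mu$, which is cofinal in $a^{\ll}=\{b\mid b\ll a\}$.

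Next I would pass to a real index by setting, for $\lambda\in(0,1]$,
\[
a_\lambda:=\sup\{\,b_\mu\mid \mu\in(0,1)\cap\QQ,\ \mu<\lambda\,\};
\]
this supremum exists by \axiomO{1}, since the defining set is an increasing chain cofinal with an increasing sequence $(b_{\mu_n})_n$ for some rationals $\mu_n\uparrow\lambda$. To see $a_1=a$: each $b_\mu\ll a$ gives $a_\lambda\leq a$ for all $\lambda$, while for the sequence $(a_n)_n$ from \axiomO{2} the cofinality of $(b_\mu)_\mu$ yields rationals $\mu_n<1$ with $a_n\ll b_{\mu_n}$, hence $a_n\leq b_{\mu_n}\leq a_1$ and so $a=\sup_n a_n\leq a_1$. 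For compact containment, given $\lambda'<\lambda$ in $(0,1]$ I would interpolate rationals $\lambda'<\mu<\nu<\lambda$; then $a_{\lambda'}\leq b_\mu\ll b_\nu\leq a_\lambda$, and property~(2) of \autoref{dfn:prelim:auxRel} upgrades this to $a_{\lambda'}\ll a_\lambda$. For the self-continuity $a_\lambda=\sup_{\lambda'<\lambda}a_{\lambda'}$, the inequality $a_\lambda\geq\sup_{\lambda'<\lambda}a_{\lambda'}$ is immediate from $a_{\lambda'}\ll a_\lambda$ (hence $a_{\lambda'}\leq a_\lambda$), while for the reverse each $b_\mu$ with $\mu<\lambda$ satisfies $b_\mu\leq a_{\mu'}$ for any real $\mu'\in(\mu,\lambda)$, so $b_\mu\leq\sup_{\lambda'<\lambda}a_{\lambda'}$, and taking the supremum over $\mu$ gives $a_\lambda\leq\sup_{\lambda'<\lambda}a_{\lambda'}$.

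I expect the main obstacle to be the simultaneous handling of the last two properties when moving from the rational chain to the real-indexed family: the relation $\ll$ is not itself continuous, so $a_{\lambda'}\ll a_\lambda$ cannot be read off directly from the definition of $a_\lambda$ as a supremum. The device that resolves this is the double interpolation $\lambda'<\mu<\nu<\lambda$ by rationals, which sandwiches the two real-indexed elements between a genuine way-below pair $b_\mu\ll b_\nu$ of the lemma's chain, after which the auxiliary-relation axioms promote the sandwich to $a_{\lambda'}\ll a_\lambda$. A minor point to keep track of throughout is the existence of the various suprema, which is guaranteed by \axiomO{1} precisely because every relevant chain is countably cofinal.
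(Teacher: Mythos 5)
Your proposal is correct and takes essentially the same route as the paper: the paper's proof likewise verifies condition (*) of \autoref{prp:prelim:ctsO2starCondition} for $\ll$ using \axiomO{2}, extracts the rational-indexed $\ll$-increasing chain, and defines $a_\lambda:=\sup\{\bar a_{\lambda'} : \lambda'<\lambda\}$ for $\lambda\in(0,1]$. The paper dismisses the final verifications as easy; the details you supply, in particular the double rational interpolation $\lambda'<\mu<\nu<\lambda$ to upgrade the sandwich $a_{\lambda'}\leq b_\mu\ll b_\nu\leq a_\lambda$ to $a_{\lambda'}\ll a_\lambda$, are exactly the intended argument.
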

\begin{proof}
Consider $S$ equipped with the transitive relation $\ll$.
Then \axiomO{2} ensures that condition (*) in \autoref{prp:prelim:ctsO2starCondition} is fulfilled with $\ll$ in place of $\prec$.
Hence, given $a\in S$ we can apply \autoref{prp:prelim:ctsO2starCondition} to choose a $\ll$-increasing chain $(\bar a_\lambda)_{\lambda\in (0,1)\cap\QQ}$ with $a=\sup_\lambda \bar a_\lambda$.
For each $\lambda\in(0,1]$, define $a_\lambda:=\sup\{\bar a_{\lambda'} : \lambda'<\lambda\}$.
It is now easy to see that the chain $(a_\lambda)_{\lambda\in (0,1]}$ satisfies the conclusion.
\end{proof}

\subsection{Closed, monoidal categories}
\label{sec:prelim:cats}

In this subsection, we recall the basic notions from the theory of closed, monoidal categories.
For details we refer to \cite{Kel05EnrichedCat} and \cite{MacLan71Categories}.
See also \cite[Appendix~A]{AntPerThi14arX:TensorProdCu}.

\begin{pgr}
\label{pgr:prelim:monCat}
A monoidal category $\mathcal{V}$ consists of a category $\mathcal{V}_0$ (which we assume is locally small), a bifunctor $\otimes\colon \mathcal{V}_0\times \mathcal{V}_0\to \mathcal{V}_0$ (covariant in each variable) and a unit object $I$ in $\mathcal{V}_0$ such that, whenever $X,Y,Z$ are objects in $\mathcal{V}_0$, there are natural isomorphisms $(X\otimes Y)\otimes Z\cong X\otimes (Y\otimes Z)$, and $X\otimes I\cong X$, and $I\otimes X\cong X$,
that are subject to certain coherence axioms.
An object or morphism in $\mathcal{V}$ means an object or morphism in $\mathcal{V}_0$, respectively.
In concrete examples, such as $\CatPom$ and $\CatCu$, we will use the same notation for a monoidal category and its underlying category.

A monoidal category $\mathcal{V}$ is called \emph{symmetric} provided that for each pair of objects $X$ and $Y$ there is a natural isomorphism $X\otimes Y\cong Y\otimes X$.

In many concrete examples of monoidal categories, the tensor product of two objects $X$ and $Y$ is the object $X\otimes Y$ (unique up to natural isomorphism) that linearizes bilinear maps from $X\times Y$.
This is formalized by considering a functorial association of bimorphisms $\mathrm{Bimor}(X\times Y,Z)$ such that $X\otimes Y$ represents the functor $\mathrm{Bimor}(X\times Y,\freeVar)$, that is, for each $Z$ there is a natural bijection
\[
\mathrm{Bimor}\big( X\times Y,Z \big) \cong \mathrm{Mor}\big( X\otimes Y,Z \big).
\]

One instance of this is the monoidal structure in the category $\CatCu$ of abstract Cuntz semigroups. We recall details in \autoref{sec:prelim:tensCu}.
Another example is the category $\CatPom$ of \pom{s};
see \autoref{pgr:prelim:CatPom}.
\end{pgr}

\begin{pgr}
\label{pgr:prelim:closedCat}
A monoidal category $\mathcal V$ is said to be \emph{closed} provided that for each object $Y$, the functor $-\otimes Y\colon \mathcal V_0\to \mathcal V_0$ has a right adjoint, that we will denote by $\ihom{Y,-}$.
Thus, in a closed monoidal category, for all objects $X,Y,Z$, there is a natural bijection
\[
\mathcal{V}_0\big( X\otimes Y,Z \big)\cong \mathcal{V}_0\big( X,\ihom{Y,Z} \big),
\]
where $\mathcal{V}_0(\freeVar,\freeVar)$ denotes the morphisms between two objects $X$ and $Y$.

Let $\mathcal{V}$ be a monoidal category with unit object $I$.
An \emph{enriched} category $\mathcal{C}$ over $\mathcal{V}$ consists of:
a collection of objects in $\mathcal{C}$;
an object $\mathcal{C}(X,Y)$ in $\mathcal{V}$, for each pair of objects $X$ and $Y$ in $\mathcal{C}$ (playing the role of the morphisms in $\mathcal{C}$ from $X$ to $Y$);
a $\mathcal{V}$-morphism $j_X\colon I\to\mathcal{C}(X,X)$, called the identity on $X$, for each object $X$ in $\mathcal{C}$ (playing the role of the identity morphism on $X$);
and for each triple $X$, $Y$ and $Z$ of objects in $\mathcal{C}$, a $\mathcal{V}$-morphism $\mathcal{C}(Y,Z)\otimes\mathcal{C}(X,Y)\to\mathcal{C}(X,Z)$ that plays the role of a composition law and is subject to certain coherence axioms;
see \cite[Section 1.2]{Kel05EnrichedCat} for details.

It follows from general category theory that every closed symmetric monoidal category $\mathcal{V}$ can be enriched over itself.
Let us recall some details.
Given two objects $X$ and $Y$ in $\mathcal{V}$, the object $\ihom{X,Y}$ in $\mathcal{V}$ plays the role of the morphisms from $X$ to $Y$.
Given an object $X$, the identity on $X$ (for the enrichment) is defined as the $\mathcal{V}$-morphism $j_X\colon I\to\ihom{X,X}$ that corresponds to the `usual' identity morphism $\id_X\in\mathcal{V}_0\big( X, X \big)$ under the following natural bijections
\[
\mathcal{V}_0\big( I,\ihom{X,X} \big)
\cong \mathcal{V}_0\big( I\otimes X, X \big)
\cong \mathcal{V}_0\big( X, X \big).
\]
It is easiest to construct the composition map by using the evaluation maps.
Given objects $X$ and $Y$, the evaluation (or counit) map is defined as the $\mathcal{V}$-morphism $\counit_X^Y\colon\ihom{X,Y}\otimes X\to Y$ that corresponds to the identity morphism in $\mathcal{V}_0(\ihom{X,Y},\ihom{X,Y})$ under the natural bijection
\[
\mathcal{V}_0\big( \ihom{X,Y}\otimes X, Y \big)
\cong \mathcal{V}_0\big( \ihom{X,Y},\ihom{X,Y} \big).
\]
Then, given objects $X$, $Y$ and $Z$, the composition $\ihom{Y,Z}\otimes\ihom{X,Y}\to\ihom{X,Z}$ is defined as the $\mathcal{V}$-morphism that corresponds to the composition
\[
\ihom{Y,Z}\otimes\ihom{X,Y}\otimes X \xrightarrow{\id_{\ihom{Y,Z}}\otimes\counit_X^Y} \ihom{Y,Z}\otimes Z \xrightarrow{\counit_Y^Z} Z
\]
under the natural bijection
\[
\mathcal{V}_0\big( \ihom{Y,Z}\otimes\ihom{X,Y}, \ihom{X,Z} \big)
\cong \mathcal{V}_0\big( \ihom{Y,Z}\otimes\ihom{X,Y}\otimes X, Z \big).
\]
\end{pgr}

The natural question of whether $\CatCu$ is a closed category was left open in \cite[Problem~2]{AntPerThi14arX:TensorProdCu}.
We show in \autoref{prp:bivarCu:CuClosed} that this is indeed the case.

\subsection{Tensor products in \texorpdfstring{$\CatCu$}{Cu}}
\label{sec:prelim:tensCu}

In this subsection we recall the construction of tensor products of $\CatCu$-semigroups as introduced in \cite{AntPerThi14arX:TensorProdCu}.

\begin{dfn}[{\cite[Definition~6.3.1]{AntPerThi14arX:TensorProdCu}}]
\label{dfn:prelim:CatCuBimor}
Let $S,T$ and $P$ be $\CatCu$-semigroups, and let $\varphi\colon S\times T\to P$ be a $\CatPom$-bimorphism.
We say that $\varphi$ is a \emph{$\CatCu$-bimorphism} if it satisfies the following conditions:
\begin{enumerate}
\item
We have that $\sup_k\varphi(a_k,b_k)=\varphi(\sup_k a_k, \sup_k b_k)$, for every increasing sequences $(a_k)_k$ in $S$ and $(b_k)_k$ in $T$.
\item
If $a',a\in S$ and $b',b\in T$ satisfy $a'\ll a$ and $b'\ll b$, then $\varphi(a',b')\ll\varphi(a,b)$.
\end{enumerate}
We denote the set of $\CatCu$-bimorphisms by $\CatCuBimor(S\times T,R)$.
\end{dfn}

Given $\CatCu$-semigroups $S,T$ and $P$, we equip $\CatCuBimor(S\times T,R)$ with pointwise order and addition, giving it the structure of a \pom.
Similarly, we consider the set of $\CatCu$-morphisms between two $\CatCu$-semigroups as a \pom{} with the pointwise order and addition.

\begin{thm}[{\cite[Theorem~6.3.3]{AntPerThi14arX:TensorProdCu}}]
\label{prp:prelim:tensCu}
Let $S$ and $T$ be $\CatCu$-semigroups.
Then there exists a $\CatCu$-semigroup $S\otimes T$ and a $\CatCu$-bimorphism $\omega\colon S\times T\to S\otimes T$ such that for every $\CatCu$-semigroup $P$ the following universal properties hold:
\begin{enumerate}
\item
For every $\CatCu$-bimorphism $\varphi\colon S\times T\to P$ there exists a (unique) $\CatCu$-morphism $\tilde{\varphi}\colon S\otimes T\to P$ such that $\varphi=\tilde{\varphi}\circ\omega$.
\item
If $\alpha_1,\alpha_2\colon S\otimes T\to P$ are $\CatCu$-morphisms, then $\alpha_1\leq\alpha_2$ if and only if $\alpha_1\circ\omega\leq\alpha_2\circ\omega$.
\end{enumerate}
Thus, for every $P$, the assignment that sends a $\CatCu$-morphism $\alpha\colon S\otimes T\to P$ to the $\CatCu$-bimorphism $\alpha\circ\omega\colon S\times T\to P$ defines a natural bijection
\[
\CatCuMor\big( S\otimes T, P \big) \cong \CatCuBimor\big( S\times T, P \big),
\]
which respects the structure of the (bi)morphism sets as \pom{s}.
\end{thm}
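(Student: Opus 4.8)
The plan is to build $S\otimes T$ by completing the tensor product $S\otimes_{\CatPom}T$ of $S$ and $T$ taken in the category of \pom{s}, which already exists together with its universal bimorphism $\omega_0\colon S\times T\to S\otimes_{\CatPom}T$ by \autoref{pgr:prelim:CatPom}. The \pom{} $M:=S\otimes_{\CatPom}T$ need not satisfy \axiomO{1}--\axiomO{4}, so I would split the work into two essentially independent pieces: first, equip $M$ with an auxiliary relation that records the way-below relations of $S$ and $T$; second, pass to a sequential completion that manufactures a \CuSgp{} whose universal property upgrades $\CatPom$-bimorphisms to \emph{$\CatCu$-bimorphisms}. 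Write $a\otimes b:=\omega_0(a,b)$ for simple tensors.

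For the auxiliary relation, I would take $\prec$ to be the smallest additive auxiliary relation on $M$ (in the sense of \autoref{dfn:prelim:auxRel}) with $a'\otimes b'\prec a\otimes b$ whenever $a'\ll a$ in $S$ and $b'\ll b$ in $T$. Using \axiomO{2} for $S$ and $T$ together with \axiomO{3} one checks that $\prec$ is interpolative and that each element of $M$ is a supremum, along $\prec$, of a $\prec$-increasing sequence. I would then define $S\otimes T$ as the set of $\prec$-increasing sequences $(x_n)_n$ in $M$, modulo the equivalence $(x_n)_n\approx(y_n)_n$ that holds when the sequences are mutually cofinal for $\prec$, ordered by $[(x_n)_n]\leq[(y_n)_n]$ iff for every $n$ there is $m$ with $x_n\prec y_m$, and added termwise (well defined since $\prec$ is additive). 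For simple tensors I would set $\omega(a,b):=[(a_n\otimes b_n)_n]$, where $a=\sup_n a_n$ and $b=\sup_n b_n$ are the $\ll$-increasing approximations from \axiomO{2}. Axioms \axiomO{1}--\axiomO{4} then hold by design: suprema of increasing sequences are computed by a diagonal sequence, \axiomO{4} reduces to termwise addition, and \axiomO{2} is built in, since $[(x_n)_n]=\sup_k[(x_k,x_k,\dots)]$ is a $\ll$-increasing supremum because $[(x,x,\dots)]\ll[(y_n)_n]$ exactly when $x\prec y_m$ for some $m$.

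For the universal property, let $\varphi\colon S\times T\to P$ be a $\CatCu$-bimorphism. As a $\CatPom$-bimorphism it factors uniquely as $\varphi=\varphi_0\circ\omega_0$ for a $\CatPom$-morphism $\varphi_0\colon M\to P$, and the two conditions of \autoref{dfn:prelim:CatCuBimor} say precisely that $\varphi_0$ carries $\prec$ into $\ll$ (on simple tensors, hence on all of $M$ by additivity and \axiomO{3} in $P$) and that it is continuous for the relevant suprema. I would define $\tilde\varphi([(x_n)_n]):=\sup_n\varphi_0(x_n)$, the supremum existing by \axiomO{1} in $P$ and the value being independent of the representative by cofinality. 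That $\tilde\varphi$ is a $\CatCu$-morphism with $\tilde\varphi\circ\omega=\varphi$ is then routine, and uniqueness follows because any $\CatCu$-morphism agreeing with $\varphi$ on simple tensors is forced, by additivity and continuity, to equal $\sup_n\varphi_0(x_n)$ on each class. Claim (2) follows from the analogous formula $\alpha([(x_n)_n])=\sup_n(\alpha\circ\omega_0)(x_n)$ valid for every $\CatCu$-morphism $\alpha$ out of $S\otimes T$; the asserted natural bijection $\CatCuMor(S\otimes T,P)\cong\CatCuBimor(S\times T,P)$ and its compatibility with the \pom{} structure are then formal (injectivity comes from applying (2) to $\alpha_1\circ\omega\leq\alpha_2\circ\omega$ and its reverse).

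I expect the main obstacle to be the two-sided compatibility between the order-theoretic completion and the monoid structure, concentrated in the construction of $\prec$ on $M$. An element of $M$ is only a \emph{finite} sum of simple tensors with no canonical representation, so proving that the generated relation $\prec$ is genuinely interpolative and additive, and that termwise addition of $\prec$-increasing sequences descends correctly to the quotient, requires a Riesz-type refinement argument rather than a formal manipulation. Verifying \axiomO{2} in $S\otimes T$ is where the sequential, as opposed to directed, character of the $\CatCu$ axioms must be respected: one must ensure that the approximants produced from \axiomO{2} in $S$ and $T$ assemble into a single $\prec$-increasing \emph{sequence}, which is exactly what the construction via rapidly increasing sequences (rather than arbitrary round ideals) is designed to guarantee.
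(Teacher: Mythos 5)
Your strategy is, in substance, the route taken in the source this paper quotes (the statement is imported from \cite{AntPerThi14arX:TensorProdCu}, Theorem~6.3.3, and not reproved here): one endows the monoid tensor product with an auxiliary relation generated by simple tensors --- concretely, $x\prec y$ if and only if $x\leq\sum_i a_i'\otimes b_i'$ and $\sum_i a_i\otimes b_i\leq y$ for finitely many $a_i'\ll a_i$ in $S$ and $b_i'\ll b_i$ in $T$, which is exactly your ``smallest additive auxiliary relation'' --- and then applies the sequential round-ideal completion, i.e.\ $\prec$-increasing sequences modulo mutual cofinality. Note that this completion is precisely the $\tau$-construction of the present paper for the path type $(\NN,<)$; see \autoref{rmk:path:tauispom}. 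Your factorization of a $\CatCu$-bimorphism through a $\CatPomMor$-morphism $\varphi_0$ on $M$, the extension $\tilde\varphi$ by suprema, and the derivation of statement~(2) from the formula for $\CatCu$-morphisms out of the completion all match the cited argument, so the architecture is correct.

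Two concrete issues remain. First, a local error: your verification of \axiomO{2} via $[(x_n)_n]=\sup_k[(x_k,x_k,\dots)]$ uses constant sequences, which are not $\prec$-increasing unless $x_k\prec x_k$; you must instead replace $x_k$ by a $\prec$-increasing sequence cofinal in $x_k^{\prec}:=\{z : z\prec x_k\}$, whose existence is the W-type axiom you assert for $(M,\prec)$ --- the fix is available, but the step as written fails. Second, and more seriously, essentially everything nontrivial is concentrated in one lemma that you name but do not prove: if $\sum_j c_j\otimes d_j\leq\sum_i a_i\otimes b_i$ in $M$ and $c_j'\ll c_j$, $d_j'\ll d_j$, then there exist $a_i'\ll a_i$, $b_i'\ll b_i$ with $\sum_j c_j'\otimes d_j'\leq\sum_i a_i'\otimes b_i'$. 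Without this Riesz-type refinement across different representations of an element of $M$, you cannot show that the approximating sequence built from one chosen representation of $x$ is cofinal in all of $x^{\prec}$ (upward directedness of $x^{\prec}$ does not follow by simply adding witnesses), nor that $(a_n\otimes b_n)_n$ is cofinal in $(a\otimes b)^{\prec}$; the latter is what makes $\omega$ preserve joint suprema of increasing sequences and is also what your uniqueness argument silently uses when it identifies $\omega(a,b)$ with the canonical image of $a\otimes b$ in the completion. (By contrast, interpolativity of $\prec$ is formal: interpolate each witness $a_i'\ll a_i''\ll a_i$ and $b_i'\ll b_i''\ll b_i$ and take $\sum_i a_i''\otimes b_i''$.) So your proposal is a correct blueprint of the actual proof, with its central refinement lemma --- the real content of the cited theorem --- left open.
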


\begin{pgr}
\label{pgr:prelim:tensmaps}
Let $S$ and $T$ be $\CatCu$-semigroups, and consider the universal $\CatCu$-bimorphism $\omega\colon S\times T\to S\otimes T$ from \autoref{prp:prelim:tensCu}.
Given $s\in S$ and $t\in T$, we set $s\otimes t :=\omega(s,t)$.
We call $s\otimes t$ a \emph{simple tensor}.

The tensor product in $\CatCu$ is functorial in each variable:
If $\varphi_1\colon S_1\to T_1$ and $\varphi_2\colon S_2\to T_2$ are $\CatCu$-morphisms, then there is a unique $\CatCu$-morphism $\varphi_1\otimes\varphi_2\colon S_1\otimes S_2\to T_1\otimes T_2$ with the property that $(\varphi_1\otimes\varphi_2)(a_1\otimes a_2)=\varphi_1(a_1)\otimes\varphi_2(a_2)$ for every $a_1\in S_1$ and $a_2\in S_2$.

Thus, the tensor product in $\CatCu$ defines a bifunctor $\otimes\colon\CatCu\times\CatCu\to\CatCu$.
The $\CatCu$-semigroup $\NNbar=\{0,1,2,\ldots,\infty\}$ is a unit object, that is, for every $\CatCu$-semigroup $S$ there are canonical isomorphisms $S\otimes\NNbar\cong S$ and $\NNbar\otimes S\cong S$.
Further, for every $\CatCu$-semigroups $S$, $T$ and $P$, there are natural isomorphisms
\[
S\otimes (T\otimes P) \cong (S\otimes T)\otimes P
\quad\text{ and }\quad
S\otimes T \cong T\otimes S.
\]
It follows that $\CatCu$ is a symmetric, monoidal category;
see also \cite[6.3.7]{AntPerThi14arX:TensorProdCu}.
\end{pgr}

\section{The Path Construction}
\label{sec:path}

In this section we introduce a functorial construction from a category of monoids with a transitive relation to the category $\CatCu$.
This construction, when restricted to the category $\CatQ$ introduced in \autoref{sec:Q} (a category that contains $\CatCu$) is a coreflection for the natural inclusion from $\CatCu$.

\begin{dfn}
\label{dfn:path:CatP}
A \emph{$\CatP$-semigroup} is a pair $(S,\prec)$, where $S$ is a commutative monoid and where $\prec$ is a transitive relation on $S$, such that:
\begin{enumerate}
\item
We have $0\prec a$ for all $a\in S$.
\item
If $a_1,a_2,b_1,b_2\in S$ satisfy $a_1\prec b_1$ and $a_2\prec b_2$, then $a_1+a_2\prec b_1+b_2$.
\end{enumerate}
We often denote a $\CatP$-semigroup $(S,\prec)$ simply by $S$.

A \emph{$\CatP$-morphism} is a monoid morphism that preserves the relation.
Given $\CatP$-semigroups $(S,\prec)$ and $(T,\prec)$, we denote the collection of all $\CatP$-morphisms by $\CatPMor((S,\prec),(T,\prec))$, or simply by $\CatPMor(S,T)$.
We let $\CatP$ be the category whose objects are $\CatP$-semigroups and whose morphisms are $\CatP$-morphisms.
\end{dfn}

\begin{rmk}
Conditions~(1) and~(2) of \autoref{dfn:path:CatP} are the same as the conditions from \autoref{dfn:prelim:auxRel} for an auxiliary relation to be additive.
\end{rmk}

\begin{dfn}
\label{dfn:path:path}
Let $I=(I,\prec)$ be a set with an upward directed transitive relation $\prec$.
Let $S=(S,\prec)$ be a $\CatP$-semigroup.
An \emph{$I$-path} (or simply a \emph{path}) in $S$ is a map $f\colon I\to S$ such that $f(\lambda')\prec f(\lambda)$ whenever $\lambda',\lambda\in I$ satisfy $\lambda'\prec\lambda$.
We set
\[
P(I,S) := \big\{ f\colon I\to S \text{ such that } f \text{ is a path in } S \big\}.
\]
We define the sum of two paths $f$ and $g$ setting $(f+g)(\lambda):=f(\lambda)+g(\lambda)$, for $\lambda\in I$.
Let $0\in P(I,S)$ denote the path given by $0(\lambda)=0$, for $\lambda\in I$.

We define a binary relation $\precsim$ on $P(I,S)$ by setting $f\precsim g$ for two paths $f$ and $g$ if and only if for every $\lambda\in I$ there exists $\mu\in I$ such that $f(\lambda)\prec g(\mu)$.
Finally we antisymmetrize the relation $\precsim$ by setting $f\sim g$ if and only if $f\precsim g$ and $g\precsim f$.

Given $s\in S$ and $f\in P(I,S)$, we write $s\prec f$ if $s\prec f(\lambda)$ for all $\lambda\in I$;
and we write $f\prec s$ provided $f(\lambda)\prec s$ for all $\lambda\in I$.
\end{dfn}

The proof of the following result is straightforward and therefore omitted.

\begin{lma}
\label{prp:path:relationsPaths}
Let $I$ be a set with an upward directed transitive relation, and let $S$ be a $\CatP$-semigroup.
Then the addition and the zero element defined in \autoref{dfn:path:path} give $P(I,S)$ the structure of a commutative monoid.
Moreover, the relation $\precsim$ on $P(I,S)$ is transitive, reflexive and satisfies:
\begin{enumerate}
\item
For every $f\in P(I,S)$ we have $0\precsim f$.
\item
If $f_1,f_2,g_1,g_2\in P(I,S)$ satisfy $f_1\precsim g_1$ and $f_2\precsim g_2$, then $f_1+f_2\precsim g_1+g_2$.
\end{enumerate}
Further, $\sim$ is an equivalence relation on $P(I,S)$.
\end{lma}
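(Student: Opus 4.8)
The plan is to verify each assertion directly from the definitions, treating the algebraic and order-theoretic parts separately. First I would check that $P(I,S)$ is closed under the operations introduced in \autoref{dfn:path:path}. If $f,g$ are paths and $\lambda'\prec\lambda$ in $I$, then $f(\lambda')\prec f(\lambda)$ and $g(\lambda')\prec g(\lambda)$, so condition~(2) of \autoref{dfn:path:CatP} yields $(f+g)(\lambda')\prec(f+g)(\lambda)$; hence $f+g$ is again a path. Likewise the constant map $0$ is a path, since $0\prec 0$ by condition~(1). The commutative monoid axioms (associativity, commutativity, neutrality of the constant path $0$) are then inherited pointwise from the monoid structure of $S$.

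Next I would show that $\precsim$ is a preorder. For transitivity, given $f\precsim g\precsim h$ and $\lambda\in I$, I first choose $\mu$ with $f(\lambda)\prec g(\mu)$ and then $\nu$ with $g(\mu)\prec h(\nu)$, and conclude $f(\lambda)\prec h(\nu)$ by transitivity of $\prec$ on $S$. Reflexivity is the first place where the hypothesis that $\prec$ is upward directed on $I$ is needed: given $\lambda$, directedness applied to the pair $\lambda,\lambda$ produces $\mu$ with $\lambda\prec\mu$, and the path condition then gives $f(\lambda)\prec f(\mu)$, so $f\precsim f$.

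Properties~(1) and~(2) follow in the same spirit. For~(1), given $\lambda$ I simply take $\mu=\lambda$ and use $0\prec f(\lambda)$ from condition~(1) of \autoref{dfn:path:CatP}. For~(2), given $\lambda$ I pick $\mu_1,\mu_2$ with $f_1(\lambda)\prec g_1(\mu_1)$ and $f_2(\lambda)\prec g_2(\mu_2)$, use upward directedness to find a single $\mu$ with $\mu_1,\mu_2\prec\mu$, apply the path condition to get $g_i(\mu_i)\prec g_i(\mu)$, and then combine $f_i(\lambda)\prec g_i(\mu)$ via condition~(2) to obtain $(f_1+f_2)(\lambda)\prec(g_1+g_2)(\mu)$. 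Finally, since $\precsim$ is reflexive and transitive, its symmetrization $\sim$ is automatically an equivalence relation: reflexivity and symmetry are immediate from the definition, and transitivity follows by applying transitivity of $\precsim$ in each direction.

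The only step that is not pure bookkeeping is the use of upward directedness of $I$ in reflexivity and in property~(2): it allows one to replace the two separate indices $\mu_1,\mu_2$ arising from $f_1\precsim g_1$ and $f_2\precsim g_2$ by a common index $\mu$, after which monotonicity of the paths $g_1,g_2$ restores the required relations $f_i(\lambda)\prec g_i(\mu)$. This interaction between directedness of the index set and the monotonicity built into the notion of a path is the conceptual heart of the statement, and everything else reduces to transitivity of $\prec$ together with the additivity conditions of \autoref{dfn:path:CatP}.
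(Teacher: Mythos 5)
Your proof is correct and is exactly the straightforward verification the paper has in mind; indeed, the paper omits the proof as routine, and your argument supplies it in full. You also correctly identify the one non-trivial ingredient, namely that upward directedness of $I$ is needed both for reflexivity of $\precsim$ (since $\prec$ on $I$ need not be reflexive) and to merge the two indices $\mu_1,\mu_2$ in property~(2).
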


\begin{dfn}
\label{dfn:path:tau}
Let $I$ be a set with an upward directed transitive relation, and let $S$ be a $\CatP$-semigroup.
Let $\sim$ be the equivalence relation on $P(I,S)$ from \autoref{dfn:path:path}.
We define
\[
\tau_I(S) :=P(I,S)/_{\sim}.
\]
Given a path $f$ in $S$, its equivalence class in $\tau_I(S)$ is denoted by $[f]$.

We define $0\in \tau_I(S)$ as the equivalence class of the zero-path.
We define $+$ and $\leq$ on $\tau_I(S)$ by setting $[f]+[g]:=[f+g]$, and by setting $[f]\leq[g]$ provided $f\precsim g$.
\end{dfn}

The following results follows immediately from \autoref{prp:path:relationsPaths}.

\begin{prp}
\label{prp:path:tauispom}
Let $I$ be a set with an upward directed transitive relation, and let $S$ be a $\CatP$-semigroup.
Then the addition, the zero element, and the order defined in \autoref{dfn:path:tau} give $\tau_I(S)$ the structure of a \pom.
\end{prp}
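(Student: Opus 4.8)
The statement to prove is Proposition~\ref{prp:path:tauispom}: that $\tau_I(S)$, with the addition, zero element, and order from \autoref{dfn:path:tau}, is a \pom. The plan is to verify each defining property of a \pom{} in turn, reducing everything to the corresponding facts about $P(I,S)$ established in \autoref{prp:path:relationsPaths} and checking that each is compatible with passing to the quotient by $\sim$.

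First I would address well-definedness of the operations on the quotient. The addition $[f]+[g]:=[f+g]$ is well-defined precisely because $\sim$ is compatible with $+$: if $f\sim f'$ and $g\sim g'$, then property~(2) of \autoref{prp:path:relationsPaths} (applied in both directions) gives $f+g\precsim f'+g'$ and $f'+g'\precsim f+g$, so $f+g\sim f'+g'$. Granting this, commutativity, associativity, and the neutrality of the zero-path all descend immediately from the monoid structure on $P(I,S)$, so $(\tau_I(S),+,0)$ is a commutative monoid.

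Next I would check that $\leq$ is a well-defined partial order. It is well-defined because $\precsim$ respects $\sim$: if $f\sim f'$, $g\sim g'$, and $f\precsim g$, then $f'\precsim f\precsim g\precsim g'$ by transitivity of $\precsim$ (\autoref{prp:path:relationsPaths}). Reflexivity and transitivity of $\leq$ follow from the same properties of $\precsim$. Antisymmetry is the point where the quotient is essential: if $[f]\leq[g]$ and $[g]\leq[f]$, then $f\precsim g$ and $g\precsim f$, which is by definition $f\sim g$, so $[f]=[g]$. This is exactly why $\tau_I(S)$ is defined as $P(I,S)/_\sim$ rather than on $P(I,S)$ itself.

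Finally I would verify the two \pom{} compatibility axioms. That $[f]+[h]\leq[g]+[h]$ whenever $[f]\leq[g]$ follows by combining property~(2) with reflexivity of $\precsim$: from $f\precsim g$ and $h\precsim h$ we get $f+h\precsim g+h$. That $0\leq[f]$ for all $[f]$ is immediate from property~(1) of \autoref{prp:path:relationsPaths}. None of these steps presents a genuine obstacle, which is why the excerpt states the result follows immediately from \autoref{prp:path:relationsPaths}; the only substantive point worth spelling out is the well-definedness of $+$ and $\leq$ on the quotient, and I would make sure the compatibility of $\sim$ with both operations (property~(2) and transitivity of $\precsim$) is invoked explicitly there.
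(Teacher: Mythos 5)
Your proof is correct and is exactly the routine verification that the paper compresses into the single remark that the result ``follows immediately'' from \autoref{prp:path:relationsPaths}: well-definedness of $+$ and $\leq$ on the quotient via property~(2) and transitivity of $\precsim$, antisymmetry from the definition of $\sim$, and the two \pom{} axioms from properties~(1) and~(2). You rightly identify well-definedness on the quotient as the only point worth spelling out, so there is nothing to add or correct.
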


\begin{rmks}
\label{rmk:path:tauispom}
(1)
We call the construction of $\tau_I(S)$ the \emph{$\tau$-construction} or \emph{path construction}.
We call $I$ the \emph{path type}.

(2)
Given a $\CatP$-semigroup $S$, the path construction $\tau_I(S)$ depends heavily on the choice of $I$.
For instance, using the most simple case $I=(\{0\},\leq)$, we obtain
\[
\tau_{\{0\}}(S)
\simeq \big\{ a\in S : a\prec a \big\}.
\]
For $I=(\NN,<)$, one can show that $\tau_I(S)$ is the (sequential) round ideal completion of $S$ as considered for instance in \cite[Proposition~3.1.6]{AntPerThi14arX:TensorProdCu}.

We will not pursue this general constructions further.
Rather,  motivated by the results in \autoref{prp:prelim:ctsO2starCondition} and \autoref{prp:prelim:ctsO2}, we will focus on the concrete case where the path type is taken to be $\big(\QQ\cap(0,1),<\big)$.
\end{rmks}

\begin{ntn}
\label{ntn:path:standardPathType}
We set $I_\QQ:=\big(\QQ\cap(0,1),<\big)$.
Given a $\CatP$-semigroup $S$, we denote $P(I_\QQ,S)$ and $\tau_{I_\QQ}(S)$ by $P(S)$ and $\tau(S)$, respectively.
If we want to stress the auxiliary relation on $S$, we also write $P(S,\prec)$ and $\tau(S,\prec)$.

Thinking of $I_\QQ$ as an ordered index set, we will often denote a path in $S$ as an indexed family $(a_\lambda)_{\lambda\in I_\QQ}$.
\end{ntn}

Given a $\CatP$-semigroup $S$, we show in \autoref{prp:path:pathinCu} that $\tau(S)$ is a $\CatCu$-semigroup when equipped with the order and addition in \autoref{dfn:path:tau}.
We split the proof into several lemmas.
Recall from \autoref{dfn:path:path} that, given paths $f$ and $g$ in $S$, and given $\lambda\in I_\QQ$, we write $f(\lambda)\prec g$ (respectively, $f\prec g(\lambda)$) if $f(\lambda)\prec g(\mu)$ (respectively, $f(\mu)\prec g(\lambda)$) for every $\mu\in I_\QQ$.

\begin{lma}
\label{prp:path:pathsubequivalence}
Let $S=(S,\prec)$ be a $\CatP$-semigroup, let $f$ be a path in $S$, and let $\lambda',\lambda\in I_\QQ$ satisfy $\lambda'<\lambda$.
Then there exists a path $h$ in $S$ such that $f(\lambda')\prec h\prec f(\lambda)$.
\end{lma}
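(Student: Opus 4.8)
The plan is to obtain $h$ by \emph{reparametrizing} the given path $f$ over the subinterval $(\lambda',\lambda)$. The crucial observation is that $\QQ\cap(\lambda',\lambda)$, equipped with its usual order, is a countable, densely ordered set with neither a least nor a greatest element, and hence is order-isomorphic to $I_\QQ=\QQ\cap(0,1)$ by the same classical theorem of Cantor already invoked in the proof of \autoref{prp:prelim:ctsO2starCondition}. I would therefore fix an order-preserving bijection $\varphi\colon I_\QQ\to\QQ\cap(\lambda',\lambda)$ and set $h:=f\circ\varphi$, that is, $h(\mu):=f(\varphi(\mu))$ for $\mu\in I_\QQ$.

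Next I would check that $h$ is a path. If $\mu'<\mu$ in $I_\QQ$, then $\varphi(\mu')<\varphi(\mu)$ since $\varphi$ is order-preserving, and since $f$ is a path this gives $h(\mu')=f(\varphi(\mu'))\prec f(\varphi(\mu))=h(\mu)$. Thus $h$ is indeed a path in $S$.

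It then remains to verify the two relations $f(\lambda')\prec h$ and $h\prec f(\lambda)$, which by the conventions in \autoref{dfn:path:path} amount to $f(\lambda')\prec h(\mu)$ and $h(\mu)\prec f(\lambda)$ for every $\mu\in I_\QQ$. Both are immediate from the fact that $\varphi$ takes values in $(\lambda',\lambda)$: for each $\mu$ we have $\lambda'<\varphi(\mu)<\lambda$, so applying the path property of $f$ yields $f(\lambda')\prec f(\varphi(\mu))=h(\mu)$ and $h(\mu)=f(\varphi(\mu))\prec f(\lambda)$.

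There is really no hard step here: once one hits on the reparametrization idea, everything follows directly from the defining property of a path, and even the transitivity of $\prec$ is not needed. The only point requiring a moment of care is the appeal to Cantor's theorem to identify $\QQ\cap(\lambda',\lambda)$ with $I_\QQ$; this is precisely what guarantees that the reparametrized family is again indexed by the standard path type $I_\QQ$ rather than by some other countable dense order, so that $h$ genuinely lies in $P(S)$.
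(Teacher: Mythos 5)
Your proof is correct and takes essentially the same approach as the paper: reparametrizing $f$ over the subinterval $(\lambda',\lambda)$. The paper does this with the explicit affine map $h(\gamma):=f\big(\gamma\lambda+(1-\gamma)\lambda'\big)$, which is order-preserving and takes rational values since $\lambda',\lambda\in\QQ$, so your appeal to Cantor's theorem is unnecessary --- and indeed bijectivity plays no role, as any order-preserving map $I_\QQ\to\QQ\cap(\lambda',\lambda)$ already yields a path indexed by $I_\QQ$ with the required properties.
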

\begin{proof}
Define $h\colon I_\QQ\to S$ by
\[
h(\gamma) :=f \big( \gamma\lambda+(1-\gamma)\lambda' \big),
\]
for $\gamma\in I_\QQ$.
Then $h$ is a path satisfying $f(\lambda')\prec h\prec f(\lambda)$, as desired.
\end{proof}

\begin{lma}
\label{prp:path:pathsuprema}
Let $S=(S,\prec)$ be a $\CatP$-semigroup.
Given a sequence $(f_n)_{n\geq 1}$ of paths in $S$, and given a sequence $(a_n)_{n\geq 1}$ in $S$ such that
\[
0\prec f_1 \prec a_1 \prec f_2\prec a_2 \prec f_3 \prec a_3  \cdots,
\]
there exists a path $h$ in $S$ such that $h(\tfrac{n}{n+1})=a_n$ for all $n\geq 1$.
\end{lma}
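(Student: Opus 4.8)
The plan is to construct $h$ explicitly by a single change of variables that turns the distinguished points $\tfrac{n}{n+1}$ into the positive integers, and then to glue the given paths $f_n$ together along the unit intervals between consecutive integers. Concretely, I would use the order isomorphism $\phi\colon\QQ\cap(0,1)\to\QQ\cap(0,\infty)$ given by $\phi(\gamma)=\tfrac{\gamma}{1-\gamma}$, whose inverse is $t\mapsto\tfrac{t}{1+t}$; it is strictly increasing, maps rationals to rationals, and satisfies $\phi(\tfrac{n}{n+1})=n$. Thus it suffices to build a map $g\colon\QQ\cap(0,\infty)\to S$ with $g(n)=a_n$ for all $n\geq 1$ and $g(t')\prec g(t)$ whenever $t'<t$, and then to set $h:=g\circ\phi$, which is then a path with $h(\tfrac{n}{n+1})=g(n)=a_n$.

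I would define $g$ piecewise: $g(t):=f_1(t)$ for $t\in(0,1)$; $g(n):=a_n$ at each integer $n\geq 1$; and $g(t):=f_{n+1}(t-n)$ for $t\in(n,n+1)$. Here every argument $t-n$ lies in $\QQ\cap(0,1)=I_\QQ$, so the expressions are defined, and within each open unit interval $g$ is monotone for $\prec$ simply because each $f_{n+1}$ is a path and $t\mapsto t-n$ is increasing.

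Before checking monotonicity across the integer points, I would first record the chain $a_n\prec a_{n+1}$ for every $n$: from the hypotheses $a_n\prec f_{n+1}$ and $f_{n+1}\prec a_{n+1}$ one has $a_n\prec f_{n+1}(\lambda)\prec a_{n+1}$ for any fixed $\lambda$, and transitivity of $\prec$ gives $a_n\prec a_{n+1}$; iterating yields $a_m\prec a_n$ whenever $m<n$. The core of the argument is then the pair of \emph{separation facts}: for every $t$ and every integer $n\geq 1$, if $t<n$ then $g(t)\prec a_n$, and if $t>n$ then $a_n\prec g(t)$. Each follows by a short case distinction on the interval containing $t$, using that $f_{k+1}(\lambda)\prec a_{k+1}$ and $a_k\prec f_{k+1}(\lambda)$ hold for all $\lambda$ (these are exactly the relations $f_{k+1}\prec a_{k+1}$ and $a_k\prec f_{k+1}$ from the hypothesis), together with the chain among the $a_n$ just established.

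Granting the separation facts, monotonicity of $g$ is immediate: given $t'<t$, if some integer $n$ satisfies $t'<n<t$ then $g(t')\prec a_n\prec g(t)$; if $t$ (resp.\ $t'$) is itself an integer $n$ then the relevant separation fact gives $g(t')\prec a_n=g(t)$ (resp.\ $a_n=g(t')\prec g(t)$); and otherwise $t',t$ lie in a common open unit interval and monotonicity was already noted. The only place requiring care is that $\prec$ is merely transitive—there is no reflexivity or antisymmetry available—so every step must be routed through an \emph{intermediate} element, and coincidences such as $a_{k+1}=a_n$ must be handled as genuine equalities rather than as instances of $\prec$. Beyond this bookkeeping I expect no real obstacle.
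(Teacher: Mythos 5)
Your proof is correct and follows essentially the same route as the paper: both glue the paths piecewise along the intervals between the points $\tfrac{n}{n+1}$, place $a_n$ at the junctions, and verify $\prec$-monotonicity from the hypotheses $f_n\prec a_n\prec f_{n+1}$ together with transitivity. The only difference is cosmetic: you reparametrize via $\gamma\mapsto\tfrac{\gamma}{1-\gamma}$ and use shifted full copies of $f_{n+1}$ on $(n,n+1)$, whereas the paper simply uses the restriction of $f_n$ to $\bigl(\tfrac{n-1}{n},\tfrac{n}{n+1}\bigr)$ without rescaling; both work because the relations $s\prec f$ and $f\prec s$ quantify over all parameter values.
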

\begin{proof}
Define $h\colon I_\QQ\to S$ as follows:
\[
h(\lambda):=
\begin{cases}
f_n(\lambda), &\text{if } \lambda\in (\tfrac{n-1}{n},\tfrac{n}{n+1}) \\
a_n, &\text{if } \lambda=\tfrac{n}{n+1}
\end{cases}.
\]
It is easy to see that $h$ is a path and that $h(\tfrac{n}{n+1})=a_n$, as desired.
\end{proof}

\begin{lma}
\label{prp:path:pathO1}
Let $S$ be a $\CatP$-semigroup, and let $([f_n])_{n\geq 1}$ be an increasing sequence in $\tau(S)$.
Then there exists a strictly increasing sequence $(\lambda_m)_{m\geq 1}$ in $I_\QQ$ and a path $f$ in $S$ such that the following conditions hold:
\begin{enumerate}
\item
We have $\sup_m\lambda_m=1$.
\item
We have $f_n(\lambda_m)\prec f_l(\lambda_l)$, whenever $n,m<l$.
\item
We have $f(\tfrac{n}{n+1})=f_n(\lambda_n)$ for all $n\geq 1$.
\end{enumerate}
Moreover, if $f$ is a path in $S$ for which there exists a strictly increasing sequence $(\lambda_m)_{m\geq 1}$ in $I_\QQ$ satisfying conditions~(1), (2) and~(3) above, then $[f]=\sup_n [f_n]$ in $\tau(S)$.
In particular, $\tau(S)$ satisfies \axiomO{1}.
\end{lma}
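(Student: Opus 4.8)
The plan is to first build the index sequence $(\lambda_m)$ by a diagonalization that encodes the increasingness of $([f_n])_n$ into condition~(2), then to assemble the path $f$ out of suitable pieces of the $f_n$ via \autoref{prp:path:pathsuprema}, and finally to read off from conditions~(1)--(3) that $[f]$ is the least upper bound of the sequence $([f_n])_n$.

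For the construction of $(\lambda_m)$ I would argue by induction. Since $([f_n])_n$ is increasing we have $f_n\precsim f_l$ whenever $n<l$; unwinding \autoref{dfn:path:path}, this says that for each $\mu\in I_\QQ$ there is $\nu\in I_\QQ$ with $f_n(\mu)\prec f_l(\nu)$. Having chosen $\lambda_1<\dots<\lambda_{l-1}$, I would for each of the finitely many pairs $(n,m)$ with $n,m<l$ pick $\nu_{n,m}\in I_\QQ$ with $f_n(\lambda_m)\prec f_l(\nu_{n,m})$, and then choose $\lambda_l\in I_\QQ$ strictly above $\lambda_{l-1}$, above every $\nu_{n,m}$, and above $1-\tfrac1l$. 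This is possible because only finitely many constraints are imposed and $I_\QQ$ is dense with no largest element. The bound $\lambda_l>1-\tfrac1l$ forces $\sup_m\lambda_m=1$ (condition~(1)), while $\nu_{n,m}<\lambda_l$ together with the path property of $f_l$ gives $f_n(\lambda_m)\prec f_l(\nu_{n,m})\prec f_l(\lambda_l)$, which is condition~(2). I expect this simultaneous bookkeeping --- securing (2) while still driving $\lambda_m\to1$ --- to be the main technical point.

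Writing $b_n:=f_n(\lambda_n)$, condition~(2) with $l=n+1$ gives $b_n\prec b_{n+1}$. To manufacture the interpolating paths demanded by \autoref{prp:path:pathsuprema}, I would apply \autoref{prp:path:pathsubequivalence} to the path $f_n$ at a pair $\mu_n<\lambda_n$ chosen so that $b_{n-1}\prec f_n(\mu_n)$; for $n\geq 2$ one may take $\mu_n:=\nu_{n-1,n-1}$ from the previous step, and for $n=1$ any $\mu_1<\lambda_1$ works since $0\prec a$ for all $a$. This produces a path $g_n$ with $f_n(\mu_n)\prec g_n\prec f_n(\lambda_n)=b_n$, whence $b_{n-1}\prec g_n\prec b_n$ by transitivity. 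Thus
\[
0\prec g_1\prec b_1\prec g_2\prec b_2\prec g_3\prec\cdots,
\]
and \autoref{prp:path:pathsuprema} yields a path $f$ with $f(\tfrac{n}{n+1})=b_n=f_n(\lambda_n)$, which is condition~(3).

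It then remains to prove the ``moreover'' statement, from which \axiomO{1} is immediate. To see $[f_n]\leq[f]$, I would fix $\mu\in I_\QQ$; since $\sup_m\lambda_m=1$ there is $m$ with $\lambda_m>\mu$, and for any $l>\max(n,m)$ condition~(2) and the path property give $f_n(\mu)\prec f_n(\lambda_m)\prec f_l(\lambda_l)=f(\tfrac{l}{l+1})$, so $f_n\precsim f$. Conversely, if $[f_n]\leq[g]$ for all $n$, then given $\lambda\in I_\QQ$ I pick $n$ with $\lambda<\tfrac{n}{n+1}$, so that $f(\lambda)\prec f(\tfrac{n}{n+1})=f_n(\lambda_n)$, and since $f_n\precsim g$ there is $\nu$ with $f_n(\lambda_n)\prec g(\nu)$; hence $f(\lambda)\prec g(\nu)$ and $f\precsim g$. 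This shows $[f]=\sup_n[f_n]$. Since every increasing sequence in $\tau(S)$ arises in this way, $\tau(S)$ satisfies \axiomO{1}.
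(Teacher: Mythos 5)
Your proof is correct and takes essentially the same route as the paper's: the same inductive choice of $(\lambda_m)$ extracted from $f_n\precsim f_l$, the same use of \autoref{prp:path:pathsubequivalence} and \autoref{prp:path:pathsuprema} to assemble the path $f$ with $f(\tfrac{n}{n+1})=f_n(\lambda_n)$, and an identical two-sided verification that $[f]=\sup_n[f_n]$. The only cosmetic difference is that you fix the whole sequence $(\lambda_m)$ first (securing condition~(2) for all pairs directly) and construct the interpolating paths $g_n$ in a second pass from the remembered witnesses $\nu_{n-1,n-1}$, whereas the paper interleaves the choice of $\lambda_m$ and the interpolating path $h_m$ in a single induction.
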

\begin{proof}
The proof is divided in two parts.

We inductively find $\lambda_m\in I_\QQ$ and $h_m\in P(S)$ for $m\geq 1$ such that:
\begin{itemize}
\item[(a)]
$\lambda_{m-1}<\lambda_m$ and $\tfrac{m}{m+1}\leq\lambda_m$, for all $m$; and
\item[(b)]
$f_{n}(\lambda_{m-1})\prec h_m \prec  f_m(\lambda_m)$, for all $n<m$.
\end{itemize}
Set $\lambda_1 :=\frac{1}{2}$, and define the path $h_1$ by $h_1(\lambda)=f_1(\tfrac{\lambda}{2})$.
Note that $0\prec h_1\prec f_1(\lambda_1)$.

Assume we have chosen $\lambda_n$ and $h_n$ for all $n<m$.
For each $k=1,\ldots,m-1$, using that $f_k\precsim f_m$, we choose $\lambda_{m,k}\in I_\QQ$ such that $f_k(\lambda_{m-1})\prec f_m(\lambda_{m,k})$.
Let $\lambda_m'$ be the maximum of $\lambda_{m,1},\ldots,\lambda_{m,m-1},\tfrac{m}{m+1}$.
Choose $\lambda_m\in I_\QQ$ with $\lambda_m'<\lambda_m$.
Using \autoref{prp:path:pathsubequivalence}, we choose a path $h_m$ with $f_m(\lambda_m')\prec h_m\prec f_m(\lambda_m)$.

Note that in particular we have the following relations:
\[
0\prec h_1 \prec f_1(\lambda_1)\prec h_2\prec f_2(\lambda_2) \prec  h_3\prec  f_3(\lambda_3)\prec h_4\cdots
\]
Applying \autoref{prp:path:pathsuprema}, we choose $f\in P(S)$ with $f(\tfrac{n}{n+1}) =f_n(\lambda_n)$ for all $n\geq 1$.
Then it is easy to check that the sequence $(\lambda_m)_m$ and the path $f$ satisfy conditions~(1), (2) and~(3).

For the second part, let $(\lambda_m)_{m\geq 1}$ be a strictly increasing sequence in $I_\QQ$, and let $f\in P(S)$ satisfy (1), (2) and~(3).
We show that $[f]=\sup_n [f_n]$ in $\tau(S)$.

We first show that $[f_n]\leq [f]$ for each $n\geq 1$.
Fix $n\geq 1$.
To verify that $f_n\precsim f$, let $\lambda$ be an element in $I_\QQ$.
Use~(1) to choose $m$ with $n<m$ and $\lambda<\lambda_m$.
Using that $f_n$ is a path at the first step, using condition~(2) at the second step, and using~(3) at the last step, we obtain that
\[
f_n(\lambda) \prec f_n(\lambda_m) \prec f_{m+1}(\lambda_{m+1}) = f(\tfrac{m+1}{m+2}).
\]
Hence $f_n\precsim f$, as desired.

Conversely, let $g\in P(S)$ satisfy $f_n\precsim g$ for all $n\geq 1$.
To show that $f\precsim g$, take $\lambda\in I_\QQ$.
Choose $m$ such that $\lambda<\tfrac{m}{m+1}$.
Since $f_m\precsim g$, there exists $\mu\in I_\QQ$ such that $f_m(\lambda_m)\prec g(\mu)$.
Using this at the last step, using that $f$ is a path at the first step, and using condition~(3) at the second step, we get
\[
f(\lambda)\prec f(\tfrac{m}{m+1}) =f_m(\lambda_m) \prec g(\mu).
\]
This shows that $f\precsim g$, as desired.
\end{proof}

\begin{dfn}
\label{dfn:path:pathcutdown}
Let $S$ be a $\CatP$-semigroup, let $f\in P(S)$, and let $\varepsilon\in I_\QQ$.
We define $f_\varepsilon\colon I_\QQ\to S$ by
\[
f_\varepsilon(\lambda):=
\begin{cases}
f(\lambda-\varepsilon), &\text{if } \lambda>\varepsilon \\
0, &\text{otherwise}
\end{cases}.
\]
We will refer to $f_\varepsilon$ as the \emph{$\varepsilon$-cut down} of $f$.
\end{dfn}

\begin{rmk}
\label{rmk:path:pathcutdown}
It is easy to see that $f_\varepsilon$ is a path in $S$.
If $t$ is a real number, we write $t_+$ for $\max\{0,t\}$.
Then, under the convention that $f(0)=0$, we have $f_\varepsilon(\lambda)=f((\lambda-\varepsilon)_+)$ for all $\lambda\in I_\QQ$.
\end{rmk}

\begin{lma}
\label{prp:path:pathO2}
Let $S$ be a $\CatP$-semigroup, and let $f\in P(S)$.
Then $[f_\varepsilon]\ll[f_{\varepsilon'}]$ in $\tau(S)$, for every $\varepsilon',\varepsilon\in I_\QQ$ with $\varepsilon'<\varepsilon$.
Moreover, we have $[f]=\sup_{\varepsilon\in I_\QQ} [f_\varepsilon]$ in $\tau(S)$.
In particular, $\tau(S)$ satisfies \axiomO{2}.
\end{lma}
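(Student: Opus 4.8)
The plan is to prove the two displayed assertions in turn and then assemble them into \axiomO{2}. The orienting observation is that as $\varepsilon$ decreases the paths $f_\varepsilon$ increase: for $\varepsilon'<\varepsilon$ and every $\lambda\in I_\QQ$ one has $f_\varepsilon(\lambda)=f((\lambda-\varepsilon)_+)\prec f((\lambda-\varepsilon')_+)=f_{\varepsilon'}(\lambda)$ because $f$ is a path (the degenerate case $(\lambda-\varepsilon)_+=0$ being covered by property~(1) of a $\CatP$-semigroup), so $f_\varepsilon\precsim f_{\varepsilon'}$ and $([f_\varepsilon])_{\varepsilon\in I_\QQ}$ is increasing; its supremum exists since $\tau(S)$ already satisfies \axiomO{1} by \autoref{prp:path:pathO1}.

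For the identity $[f]=\sup_{\varepsilon\in I_\QQ}[f_\varepsilon]$ I would first note $[f_\varepsilon]\leq[f]$: given $\lambda$, either $f_\varepsilon(\lambda)=0\prec f(\lambda)$ or $f_\varepsilon(\lambda)=f(\lambda-\varepsilon)\prec f(\lambda)$ since $f$ is a path, so $f_\varepsilon\precsim f$. For the reverse inequality, let $[g]$ be any upper bound, so that $f_\varepsilon\precsim g$ for all $\varepsilon$; to obtain $f\precsim g$, fix $\lambda\in I_\QQ$, choose $\varepsilon\in I_\QQ$ with $\lambda+\varepsilon<1$, and observe that $f_\varepsilon(\lambda+\varepsilon)=f(\lambda)$, so applying $f_\varepsilon\precsim g$ at the index $\lambda+\varepsilon$ produces $\mu$ with $f(\lambda)\prec g(\mu)$. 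This ``shift by $\varepsilon$'' is the only device required here.

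The substantial step is $[f_\varepsilon]\ll[f_{\varepsilon'}]$ for $\varepsilon'<\varepsilon$, which I regard as the main obstacle. Let $([g_n])_n$ be increasing in $\tau(S)$ with $[f_{\varepsilon'}]\leq\sup_n[g_n]$; I must find one $k$ with $[f_\varepsilon]\leq[g_k]$. Invoking the second part of \autoref{prp:path:pathO1}, I would replace the supremum by a concrete representative: a path $g$ and a strictly increasing sequence $(\lambda_m)_m$ in $I_\QQ$ with $\sup_m\lambda_m=1$, with $g(\tfrac{n}{n+1})=g_n(\lambda_n)$, and with $[g]=\sup_n[g_n]$; in particular $f_{\varepsilon'}\precsim g$. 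The point is to exploit the gap between $\varepsilon'$ and $\varepsilon$: pick a rational $s$ with $1-\varepsilon<s<1-\varepsilon'$. Every argument $\lambda-\varepsilon$ appearing in $f_\varepsilon$ satisfies $\lambda-\varepsilon<1-\varepsilon<s$, so the single element $f(s)$ dominates the whole path, that is $f_\varepsilon(\lambda)\prec f(s)$ for all $\lambda\in I_\QQ$. Since $s+\varepsilon'<1$ we have $f(s)=f_{\varepsilon'}(s+\varepsilon')$, whence $f_{\varepsilon'}\precsim g$ yields $\mu$ with $f(s)\prec g(\mu)$; choosing $n$ with $\mu<\tfrac{n}{n+1}$ and using that $g$ is a path gives $f(s)\prec g(\tfrac{n}{n+1})=g_n(\lambda_n)$. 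Hence $f_\varepsilon(\lambda)\prec f(s)\prec g_n(\lambda_n)$ for every $\lambda$, so $f_\varepsilon\precsim g_n$ with the constant witness $\lambda_n$, giving $[f_\varepsilon]\leq[g_n]$. The crux, and what makes the way-below relation work, is precisely that the gap lets one dominate the entire path $f_\varepsilon$ by a single value $f(s)$, which can then be absorbed into one fixed $g_n$ rather than chasing the varying index $\mu$.

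Finally, to deduce \axiomO{2} at an arbitrary $[f]\in\tau(S)$, I would take $\varepsilon_n:=\tfrac{1}{n+1}$ and set $a_n:=[f_{\varepsilon_n}]$. The way-below step gives $a_n\ll a_{n+1}$, and since $(\varepsilon_n)_n$ is cofinal downwards in $I_\QQ$ the supremum identity gives $\sup_n a_n=\sup_{\varepsilon\in I_\QQ}[f_\varepsilon]=[f]$, which is exactly the statement of \axiomO{2}.
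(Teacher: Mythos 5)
Your proof is correct and follows essentially the same route as the paper's: both arguments hinge on replacing the supremum $\sup_n[g_n]$ by the concrete representative from \autoref{prp:path:pathO1} and then dominating the entire cut-down path by a single value of $f$ taken inside the gap, which lands in one fixed $g_n(\lambda_n)$. The only cosmetic difference is that the paper first reduces $[f_\varepsilon]\ll[f_{\varepsilon'}]$ to the case $\varepsilon'=0$ via the identity $f_\varepsilon=(f_{\varepsilon'})_{\varepsilon-\varepsilon'}$, whereas you handle the pair $(\varepsilon',\varepsilon)$ directly by choosing $s\in(1-\varepsilon,1-\varepsilon')$, and you also write out the supremum verification that the paper dismisses as routine.
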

\begin{proof}
It is routine to check that $[f]=\sup_{\varepsilon}[f_{\varepsilon}]$.
Given $\varepsilon',\varepsilon\in I_\QQ$ with $\varepsilon'<\varepsilon$, note that  $f_{\varepsilon}=(f_{\varepsilon'})_{\varepsilon-\varepsilon'}$.
Thus it is enough to show that $[f_\varepsilon]\ll [f]$ for every $\varepsilon>0$.

Fix $\varepsilon>0$.
To show that $[f_\varepsilon]\ll [f]$, let $([g_n])_n$ be an increasing sequence in $\tau(S)$ with $[f]\leq \sup_n[g_n]$.
By \autoref{prp:path:pathO1}, there exists a path $h\in P(S)$ and an increasing sequence $(\lambda_n)_n$ in $I_\QQ$ such that $[h]=\sup_n[g_n]$, and such that $h(\tfrac{m}{m+1})= g_m(\lambda_m)$ for all $m\geq 1$.

Choose $m_0\geq 1$ with $\tfrac{1}{m_0}<\varepsilon$.
Since $f\precsim h$, there exists $\mu\in I_\QQ$ satisfying $f(1-\tfrac{1}{m_0})\prec h(\mu)$.
Choose $m_1\geq 1$ such that $\mu<\tfrac{m_1}{m_1+1}$.
Let us show that $f_\varepsilon\precsim g_{m_1}$.
For every $\lambda\in I_\QQ$, we have $\lambda-\varepsilon<1-\tfrac{1}{m_0}$.
Therefore, using that $f$ and $h$ are paths at the second and fourth step, respectively, and using that $f(1-\tfrac{1}{m_0})\prec h(\mu)$ at the third step, we obtain that
\[
f_\varepsilon(\lambda)
=f\big( (\lambda-\varepsilon)_+ \big)
\prec f(1-\tfrac{1}{m_0})
\prec h(\mu)
\prec h(\tfrac{m_1}{m_1+1})
= g_{m_1}(\lambda_{m_1}),
\]
for every $\lambda\in I_\QQ$.
This proves that $[f_\varepsilon]\leq [g_{m_1}]$, as desired.
\end{proof}

\begin{thm}
\label{prp:path:pathinCu}
Let $S$ be a $\CatP$-semigroup.
Then $\tau(S)$ is a $\CatCu$-semigroup.
\end{thm}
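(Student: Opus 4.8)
The plan is to lean on the results already in place: by \autoref{prp:path:tauispom} the set $\tau(S)$ is a \pom, by \autoref{prp:path:pathO1} it satisfies \axiomO{1}, and by \autoref{prp:path:pathO2} it satisfies \axiomO{2}. Hence it only remains to verify \axiomO{3} and \axiomO{4}, and for both the guiding idea is that cutting down paths and the path construction of \autoref{prp:path:pathO1} are compatible with the (pointwise) addition of paths.

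For \axiomO{3} the key point is that the cut-down operation commutes with addition: by \autoref{rmk:path:pathcutdown}, for $f,g\in P(S)$ and $\varepsilon\in I_\QQ$ both $(f+g)_\varepsilon$ and $f_\varepsilon+g_\varepsilon$ send $\lambda$ to $f((\lambda-\varepsilon)_+)+g((\lambda-\varepsilon)_+)$, so $(f+g)_\varepsilon=f_\varepsilon+g_\varepsilon$ as paths. Now assume $[f']\ll[f]$ and $[g']\ll[g]$ in $\tau(S)$. By \autoref{prp:path:pathO2} the classes $[f_{1/(k+1)}]$ form an increasing sequence with supremum $[f]$, so $[f']\ll[f]$ provides an index with $[f']\leq[f_\varepsilon]$; after shrinking $\varepsilon$ we may assume simultaneously $[g']\leq[g_\varepsilon]$. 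Then, computing sums as in \autoref{dfn:path:tau},
\[
[f']+[g']\leq[f_\varepsilon]+[g_\varepsilon]=[f_\varepsilon+g_\varepsilon]=[(f+g)_\varepsilon]\ll[f+g]=[f]+[g],
\]
where the way-below step is \autoref{prp:path:pathO2}. Since the way-below relation on any partially ordered set satisfies ``$x'\leq x\ll y$ implies $x'\ll y$'', we conclude $[f']+[g']\ll[f]+[g]$, establishing \axiomO{3}.

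For \axiomO{4}, let $([f_n])_n$ and $([g_n])_n$ be increasing sequences in $\tau(S)$; I will prove the equality $\sup_n([f_n]+[g_n])=\sup_n[f_n]+\sup_n[g_n]$ directly. The plan is to run the inductive construction from the first half of the proof of \autoref{prp:path:pathO1} \emph{simultaneously} for both sequences: at the $m$-th step one enlarges the index $\lambda_m$ so as to absorb the finitely many constraints arising from both $(f_n)_n$ and $(g_n)_n$ at once. This yields a single strictly increasing sequence $(\lambda_m)_m$ with $\sup_m\lambda_m=1$ together with paths $F,G\in P(S)$ satisfying $F(\tfrac{n}{n+1})=f_n(\lambda_n)$ and $G(\tfrac{n}{n+1})=g_n(\lambda_n)$, for which conditions~(1), (2) and~(3) of \autoref{prp:path:pathO1} hold for $F$ relative to $(f_n)_n$ and for $G$ relative to $(g_n)_n$ with the \emph{same} sequence $(\lambda_m)_m$. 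By the ``moreover'' part of \autoref{prp:path:pathO1} this already gives $[F]=\sup_n[f_n]$ and $[G]=\sup_n[g_n]$, and hence $[F]+[G]=\sup_n[f_n]+\sup_n[g_n]$.

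The punchline is to apply the ``moreover'' part of \autoref{prp:path:pathO1} a second time, now to the increasing sequence $([f_n]+[g_n])_n=([f_n+g_n])_n$ together with the path $F+G$ and the same sequence $(\lambda_m)_m$. Condition~(3) holds because $(F+G)(\tfrac{n}{n+1})=f_n(\lambda_n)+g_n(\lambda_n)=(f_n+g_n)(\lambda_n)$, and condition~(2) holds because the relations $f_n(\lambda_m)\prec f_l(\lambda_l)$ and $g_n(\lambda_m)\prec g_l(\lambda_l)$ (valid for $n,m<l$) add up, by additivity of $\prec$ in a $\CatP$-semigroup (\autoref{dfn:path:CatP}), to $(f_n+g_n)(\lambda_m)\prec(f_l+g_l)(\lambda_l)$. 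Therefore $[F+G]=\sup_n([f_n]+[g_n])$, while on the other hand $[F+G]=[F]+[G]=\sup_n[f_n]+\sup_n[g_n]$ by \autoref{dfn:path:tau}; comparing the two expressions gives \axiomO{4}. I expect the main obstacle to be exactly this synchronization of the two inductive constructions into a single index sequence $(\lambda_m)_m$ witnessing both suprema at once; granting that, the compatibility of the path construction with addition does the rest.
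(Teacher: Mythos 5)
Your proof is correct; the \axiomO{3} half is essentially the paper's own argument, but your \axiomO{4} argument takes a genuinely different route, so let me compare the two.

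For \axiomO{3} you argue exactly as the paper does: the identity $(f+g)_\varepsilon=f_\varepsilon+g_\varepsilon$ together with \autoref{prp:path:pathO2}, choosing one $\varepsilon$ that works for both paths (the paper takes $\varepsilon=\min\{\varepsilon_1,\varepsilon_2\}$ where you ``shrink''; your extra care in extracting the increasing sequence $([f_{1/(k+1)}])_k$ to match the sequential definition of $\ll$ is a nice touch the paper leaves implicit). For \axiomO{4} the paper uses \autoref{prp:path:pathO1} purely as a black box, applying it once to each sequence and accepting two unrelated index sequences $(\lambda_m)_m$ and $(\mu_m)_m$; it then proves $[f]+[g]\leq\sup_n([f_n]+[g_n])$ by a direct elementwise estimate: given $\lambda$, pick $m$ with $\lambda<\tfrac{m}{m+1}$ and $\tilde\lambda$ above both $\lambda_m$ and $\mu_m$, so that $f(\lambda)+g(\lambda)\prec f_m(\lambda_m)+g_m(\mu_m)\prec f_m(\tilde\lambda)+g_m(\tilde\lambda)$ --- path monotonicity absorbs the mismatch between the two index sequences, so no synchronization is ever needed. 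You instead reopen the inductive construction inside the proof of \autoref{prp:path:pathO1} and run it for both sequences in parallel, producing a single sequence $(\lambda_m)_m$ witnessing both suprema, and then invoke the ``moreover'' part of that lemma a third time, for $([f_n+g_n])_n$ and the path $F+G$; condition~(2) for the sum sequence does follow from additivity of $\prec$ and condition~(3) from pointwise addition, as you say. The synchronization you flag as the main obstacle genuinely works: each inductive step of the construction imposes only finitely many constraints of the form ``$\lambda_m$ must exceed a given rational below $1$'', so the constraints coming from both families can be absorbed by one choice of $\lambda_m$, with the interpolating paths $h_m$ chosen separately for each family. What each approach buys: the paper's route is shorter and never looks inside \autoref{prp:path:pathO1} (whose statement, as given, does not supply a common index sequence for two input sequences), whereas yours costs a re-examination of that proof but yields the cleaner structural fact that the sum of the two witnessing paths itself witnesses the supremum of the sum sequence --- \axiomO{4} is then read off from the uniqueness half of \autoref{prp:path:pathO1} rather than verified by hand.
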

\begin{proof}
By \autoref{prp:path:tauispom}, \autoref{prp:path:pathO1} and \autoref{prp:path:pathO2}, $\tau(S)$ is a \pom{} that satisfies axioms \axiomO{1} and \axiomO{2}.
It remains to show that $\tau(S)$ satisfies \axiomO{3} and \axiomO{4}.

To verify \axiomO{3}, let $[f'],[f],[g'],[g]\in\tau(S)$ satisfy $[f']\ll[f]$ and $[g']\ll[g]$.
Using that $[f]=\sup_\varepsilon[f_\varepsilon]$, we can choose $\varepsilon_1\in I_\QQ$ such that $[f']\leq[f_{\varepsilon_1}]$.
Similarly we choose $\varepsilon_2$ for $[g]$.
Set $\varepsilon :=\min\{\varepsilon_1,\varepsilon_2\}$.
We then have $[f']\leq[f_\varepsilon]$ and $[g']\leq[g_\varepsilon]$.
Using that $f_\varepsilon+g_\varepsilon=(f+g)_\varepsilon$ at the third step, and using \autoref{prp:path:pathO2} at the fourth step, we deduce that
\[
[f']+[g']
\leq[f_\varepsilon]+[ g_\varepsilon]
=[f_\varepsilon+g_\varepsilon]
=[(f+g)_\varepsilon]
\ll[f+g]
=[f]+[g],
\]
which implies that $[f']+[g']\ll[f]+[g]$, as desired.

To prove \axiomO{4}, let $([f_n])_n$ and $([g_n])_n$ be two increasing sequences in $\tau(S)$.
It is clear that $\sup_n ([f_n]+[g_n])\leq\sup_n[f_n]+\sup_n[g_n]$.
Let us prove the converse inequality.

By \autoref{prp:path:pathO1}, there exist $f,g\in P(S)$ and increasing sequences $(\lambda_m)_m$ and $(\mu_m)_m$ in $I_\QQ$ such that $[f]=\sup_n[f_n]$ and $[g]=\sup_n[g_n]$, and such that
$f(\tfrac{m}{m+1})= f_m(\lambda_m)$ and $g(\tfrac{m}{m+1})= g_m(\mu_m)$ for all $m\in\NN$.
Given $\lambda\in I_\QQ$, choose $m\in\NN$ with $\lambda<\tfrac{m}{m+1}$.
Choose $\tilde\lambda\in I_\QQ$ such that $\lambda_m,\mu_m<\tilde\lambda$.
We deduce that
\begin{align*}
f(\lambda)+g(\lambda)
\prec (f+g)(\tfrac{m}{m+1})
= f_m(\lambda_m) + g_m(\mu_m)
\prec f_m(\tilde\lambda)+ g_m(\tilde\lambda).
\end{align*}
It follows that $[f]+[g]\leq\sup_n([f_n]+[g_n])$, as desired.
This verifies \axiomO{4}.
\end{proof}

The following result provides a useful criterion for compact containment in $\tau(S)$.

\begin{lma}
\label{prp:path:pathwaybelow}
Let $S$ be a $\CatP$-semigroup, and let $f',f$ be elements in $P(S)$.
Then $[f']\ll[f]$ in $\tau(S)$ if and only if there exists $\mu\in I_\QQ$ such that $f'\prec f(\mu)$.
\end{lma}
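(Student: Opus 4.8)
The plan is to use the cut-down paths $f_\varepsilon$ from \autoref{dfn:path:pathcutdown} as the bridge between the two conditions, exploiting the description of compact containment in \autoref{prp:path:pathO2}: there one has $[f]=\sup_{\varepsilon\in I_\QQ}[f_\varepsilon]$ and $[f_\varepsilon]\ll[f]$ for every $\varepsilon$. Both implications will be reduced to the more tractable statement that $[f']\leq[f_\varepsilon]$ for a suitable $\varepsilon$, which by the definition of the order on $\tau(S)$ means exactly $f'\precsim f_\varepsilon$.

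For the ``if'' direction, suppose $f'\prec f(\mu)$, i.e.\ $f'(\lambda)\prec f(\mu)$ for all $\lambda\in I_\QQ$. Since $\mu<1$, I would choose $\varepsilon\in I_\QQ$ with $\varepsilon<1-\mu$ and set $\nu:=\mu+\varepsilon\in I_\QQ$; as $\nu>\varepsilon$ and $(\nu-\varepsilon)_+=\mu$, one gets $f_\varepsilon(\nu)=f(\mu)$, so $f'(\lambda)\prec f_\varepsilon(\nu)$ for every $\lambda$, witnessing $f'\precsim f_\varepsilon$ and hence $[f']\leq[f_\varepsilon]$. Since $[f_\varepsilon]\ll[f]$ by \autoref{prp:path:pathO2} and $\ll$ is an auxiliary relation on the $\CatCu$-semigroup $\tau(S)$ (\autoref{rmk:prelim:CatCu}), the chain $[f']\leq[f_\varepsilon]\ll[f]$ gives $[f']\ll[f]$.

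For the ``only if'' direction, suppose $[f']\ll[f]$. Writing $[f]$ as the supremum of the increasing sequence $\big([f_{1/(n+1)}]\big)_n$ supplied by \autoref{prp:path:pathO2}, the definition of the way-below relation yields some $\varepsilon\in I_\QQ$ with $[f']\leq[f_\varepsilon]$, that is, $f'\precsim f_\varepsilon$. The key point, and the one delicate step, is that $f'\precsim f_\varepsilon$ provides only, for each $\lambda$, \emph{some} $\nu$ with $f'(\lambda)\prec f_\varepsilon(\nu)$, whereas I need a single $\mu$ working uniformly in $\lambda$. This is resolved by the uniform bound $f_\varepsilon(\nu)\prec f(1-\varepsilon)$, valid for \emph{every} $\nu\in I_\QQ$: indeed $(\nu-\varepsilon)_+<1-\varepsilon$ always, so either $f_\varepsilon(\nu)=0\prec f(1-\varepsilon)$, or $f_\varepsilon(\nu)=f\big((\nu-\varepsilon)_+\big)\prec f(1-\varepsilon)$ because $f$ is a path. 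Setting $\mu:=1-\varepsilon\in I_\QQ$ and invoking transitivity of $\prec$, for every $\lambda$ I obtain $f'(\lambda)\prec f_\varepsilon(\nu)\prec f(\mu)$, so $f'\prec f(\mu)$, as required.

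I anticipate that the only genuine obstacle is this uniformization in the ``only if'' direction, which the funnelling through the single value $f(1-\varepsilon)$ is designed to overcome; everything else is bookkeeping with the cut-down operation and the convention $f(0)=0$. The remaining care is merely to check that the auxiliary indices ($\nu=\mu+\varepsilon$ and $1-\varepsilon$) genuinely lie in $I_\QQ=\QQ\cap(0,1)$, which is why $\varepsilon$ is constrained as above.
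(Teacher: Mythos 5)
Your proof is correct and follows essentially the same route as the paper's: both directions go through the cut-down paths $f_\varepsilon$, using $[f']\leq[f_\varepsilon]\ll[f]$ for the ``if'' direction and, for the ``only if'' direction, extracting $\delta$ with $[f']\leq[f_\delta]$ and funnelling all values $f_\delta(\nu)$ through the single element $f(1-\delta)$ via $(\nu-\delta)_+<1-\delta$. Your only additions are bookkeeping the paper leaves implicit (replacing the directed family $([f_\varepsilon])_\varepsilon$ by the increasing sequence $([f_{1/(n+1)}])_n$ before invoking the definition of $\ll$, and treating the case $f_\varepsilon(\nu)=0$ separately), which is careful but not a different argument.
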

\begin{proof}
Assume that $[f']\ll[f]$.
Since $[f]=\sup_\varepsilon [ f_\varepsilon]$, there exists $\delta\in I_\QQ$ such that $[f']\leq [f_\delta]$.
Let us show that $\mu=1-\delta$ has the desired properties, that is, $f'\prec f(\mu)$.
Given $\lambda\in I_\QQ$, there is $\mu'\in I_\QQ$ with $f'(\lambda)\prec f_\delta(\mu')$.
Using that $(\mu'-\delta)_+<1-\delta=\mu$ at the last step, we deduce that
\[
f'(\lambda)\prec f_\delta(\mu')= f\big( (\mu'-\delta)_+ \big) \prec f(\mu).
\]

Conversely, suppose that there exists $\mu\in I_\QQ$ with $f'\prec f(\mu)$.
Then, for every $\mu'$ with $\mu<\mu'<1$ we have $[f']\leq [f_{1-\mu'}]\ll[f]$, as desired.
\end{proof}

\begin{lma}
\label{prp:path:tauOfMorphism}
Let $S$ and $T$ be $\CatP$-semigroups, and let $\alpha\colon S\to T$ be a $\CatP$-morphism.
Then, for every $f\in P(S)$, the map $\alpha\circ f\colon I_\QQ\to T$ belongs to $P(T)$.
Moreover, the induced map $\tau(\alpha)\colon\tau(S)\to\tau(T)$ given by
\[
\tau(\alpha)([f]) := [\alpha\circ f],
\]
for $f\in P(S)$, is a well-defined $\CatCu$-morphism.
\end{lma}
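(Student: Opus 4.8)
The plan is to verify the two claims of Lemma~\ref{prp:path:tauOfMorphism} in sequence: first that $\alpha \circ f$ is a path, then that the induced map $\tau(\alpha)$ is a well-defined $\CatCu$-morphism. For the first claim, I would unwind the definition of a path from \autoref{dfn:path:path}: given $f \in P(S)$ and $\lambda' < \lambda$ in $I_\QQ$, I have $f(\lambda') \prec f(\lambda)$ by definition, and since $\alpha$ is a $\CatP$-morphism it preserves the relation $\prec$, so $\alpha(f(\lambda')) \prec \alpha(f(\lambda))$. Hence $\alpha \circ f$ satisfies the defining property of a path, giving $\alpha \circ f \in P(T)$.

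For well-definedness of $\tau(\alpha)$, I would show that $f \sim g$ implies $\alpha \circ f \sim \alpha \circ g$; by symmetry it suffices to show $f \precsim g$ implies $\alpha \circ f \precsim \alpha \circ g$. Given $\lambda \in I_\QQ$, the hypothesis $f \precsim g$ yields $\mu$ with $f(\lambda) \prec g(\mu)$, and applying $\alpha$ gives $\alpha(f(\lambda)) \prec \alpha(g(\mu))$, which is exactly the condition witnessing $\alpha \circ f \precsim \alpha \circ g$. Thus $[\alpha \circ f]$ depends only on $[f]$. The same computation shows $\tau(\alpha)$ preserves the order $\leq$ on $\tau(S)$, and since $\alpha$ is additive and sends the zero path to the zero path, $\tau(\alpha)$ preserves addition and the zero element.

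The remaining task is to show $\tau(\alpha)$ preserves suprema of increasing sequences and the way-below relation, and here I would exploit the characterizations established earlier rather than argue directly. For suprema, given an increasing sequence $([f_n])_n$ in $\tau(S)$, I would use \autoref{prp:path:pathO1} to produce a path $f$ and a strictly increasing sequence $(\lambda_m)_m$ in $I_\QQ$ with $\sup_m \lambda_m = 1$, with $f_n(\lambda_m) \prec f_l(\lambda_l)$ for $n,m < l$, and with $f(\tfrac{n}{n+1}) = f_n(\lambda_n)$, so that $[f] = \sup_n [f_n]$. Applying $\alpha$ to each condition, the sequence $(\lambda_m)_m$ together with the path $\alpha \circ f$ satisfies the same three conditions relative to the sequence $(\alpha \circ f_n)_n$ in $T$ --- this is immediate since $\alpha$ preserves $\prec$ and commutes with evaluation. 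The uniqueness clause in \autoref{prp:path:pathO1} then forces $[\alpha \circ f] = \sup_n [\alpha \circ f_n]$ in $\tau(T)$, which is precisely $\tau(\alpha)(\sup_n [f_n]) = \sup_n \tau(\alpha)([f_n])$.

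For the way-below relation, I would invoke the clean criterion of \autoref{prp:path:pathwaybelow}: if $[f'] \ll [f]$ in $\tau(S)$, then there exists $\mu \in I_\QQ$ with $f' \prec f(\mu)$, meaning $f'(\lambda) \prec f(\mu)$ for all $\lambda$. Applying $\alpha$ gives $(\alpha \circ f')(\lambda) = \alpha(f'(\lambda)) \prec \alpha(f(\mu)) = (\alpha \circ f)(\mu)$ for all $\lambda$, i.e.\ $\alpha \circ f' \prec (\alpha \circ f)(\mu)$, so by the converse direction of \autoref{prp:path:pathwaybelow} we conclude $[\alpha \circ f'] \ll [\alpha \circ f]$. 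I do not expect any serious obstacle here; the proof is essentially a sequence of applications of the fact that $\alpha$ preserves $\prec$, with all the structural work already packaged into \autoref{prp:path:pathO1} and \autoref{prp:path:pathwaybelow}. The one point demanding a little care is confirming that the three technical conditions in \autoref{prp:path:pathO1} transport verbatim under $\alpha$, since that is what lets me cite the uniqueness statement for suprema rather than reprove \axiomO{1}-type estimates by hand.
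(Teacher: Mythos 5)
Your proposal is correct and follows essentially the same route as the paper's proof: path preservation and well-definedness by direct application of $\alpha$ preserving $\prec$, the way-below relation via both directions of the criterion in \autoref{prp:path:pathwaybelow}, and preservation of suprema by transporting the three conditions of \autoref{prp:path:pathO1} along $\alpha$ and invoking its ``moreover'' clause. No gaps; the one subtlety you flag (checking the conditions transport verbatim) is exactly the point the paper verifies as well.
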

\begin{proof}
Given $f\in P(S)$, it is easy to see that $\alpha\circ f$ belongs to $P(T)$.
Moreover, given $f,g\in P(S)$ with $f\precsim g$ we have $\alpha\circ f\precsim \alpha\circ g$.
This shows that $\tau(\alpha)$ is well-defined and order-preserving.
It is also easy to see that $\tau(\alpha)$ preserves addition and the the zero element.

To show that $\tau(\alpha)$ preserves the way-below relation, let $f',f\in P(S)$ satisfy $[f']\ll[f]$ in $\tau(S)$.
By \autoref{prp:path:pathwaybelow}, there is $\mu\in I_\QQ$ with $f'\prec f(\mu)$.
Since $\alpha$ is a $\CatP$-morphism, we obtain that $\alpha\circ f'\prec(\alpha\circ f)(\mu)$.
A second usage of \autoref{prp:path:pathwaybelow} implies that $[\alpha\circ f']\ll [\alpha\circ f]$, as desired.

To show that $\tau(\alpha)$ preserves suprema of increasing sequences, let $([f_n])_n$ be such a sequence in $\tau(S)$.
By \autoref{prp:path:pathO1}, there exist $f\in P(S)$ and a strictly increasing sequence $(\lambda_m)_m$ in $I_\QQ$ such that the following conditions are satisfied:
\begin{enumerate}
\item
We have $\sup_m\lambda_m=1$.
\item
We have $f_n(\lambda_m)\prec f_l(\lambda_l)$, whenever $n,m<l$.
\item
We have $f(\tfrac{n}{n+1})=f_n(\lambda_n)$ for all $n\geq 1$.
\end{enumerate}
Further, for every $f\in P(S)$ satisfying these conditions, we have $[f]=\sup_n[f_n]$.

To show that $[\alpha\circ f]=\sup_n[\alpha\circ f_n]$, we verify that the path $\alpha\circ f$ and the sequence $(\lambda_m)_m$ satisfy the analogs of the above conditions with respect to the sequence $(\alpha\circ f_n)_n$.
Condition~(1) is unchanged.
To verify the analog of condition~(2), let $n,m<l$.
Since $\alpha$ is a $\CatP$-morphism, we have $(\alpha\circ f_n)(\lambda_m)\prec(\alpha\circ f_l)(\lambda_l)$, as desired.
The analog of~(3) holds, since
\[
(\alpha\circ f)(\tfrac{n}{n+1})
=(\alpha\circ f_n)(\lambda_n),
\]
for every $n\geq 1$.
Thus, the path $\alpha\circ f$ satisfies conditions~(1), (2) and~(3) for the sequence $(\alpha\circ f_n)_n$, which implies that $[\alpha\circ f]=\sup_n[\alpha\circ f_n]$.
Using this in the third step, we deduce that
\[
\tau(\alpha)(\sup_n[f_n])
=\tau(\alpha)([f])
=[\alpha\circ f]
=\sup_n[\alpha\circ f_n]
=\sup_n\tau(\alpha)([f_n]),
\]
as desired.
Altogether, we have that $\tau(\alpha)$ is a $\CatCu$-morphism.
\end{proof}

\begin{prp}
\label{prp:path:functoriality}
The $\tau$-construction defines a covariant functor $\tau\colon\CatP\to\CatCu$ by sending a $\CatP$-semigroup $S$ to the $\CatCu$-semigroup $\tau(S)$ (see \autoref{prp:path:pathinCu}), and by sending a $\CatP$-morphism $\alpha\colon S\to T$ to the $\CatCu$-morphism $\tau(\alpha)\colon\tau(S)\to\tau(T)$ (see \autoref{prp:path:tauOfMorphism}).
\end{prp}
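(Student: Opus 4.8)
The plan is to verify the two functoriality axioms: that $\tau$ preserves identities and that it preserves composition, with all the serious work already done in the preceding lemmas. First I would note that $\tau$ is well-defined on objects by \autoref{prp:path:pathinCu}, and well-defined on morphisms by \autoref{prp:path:tauOfMorphism}, which moreover guarantees that each $\tau(\alpha)$ is an honest $\CatCu$-morphism. So the only thing left is to check the two compatibility identities, and both reduce to pointwise computations on paths.

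For the identity, given a $\CatP$-semigroup $S$ and its identity morphism $\id_S\colon S\to S$, I would observe that for any path $f\in P(S)$ one has $\id_S\circ f = f$ as maps $I_\QQ\to S$. Hence $\tau(\id_S)([f]) = [\id_S\circ f] = [f]$ for every $f$, so $\tau(\id_S)=\id_{\tau(S)}$.

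For composition, suppose $\alpha\colon S\to T$ and $\beta\colon T\to U$ are $\CatP$-morphisms. I would take an arbitrary $f\in P(S)$ and compute, using the pointwise definition of $\tau$ on representatives,
\[
\tau(\beta)\big(\tau(\alpha)([f])\big)
= \tau(\beta)([\alpha\circ f])
= [\beta\circ(\alpha\circ f)]
= [(\beta\circ\alpha)\circ f]
= \tau(\beta\circ\alpha)([f]),
\]
where the middle equalities are just associativity of composition of maps, and the applications of $\tau(\alpha)$ and $\tau(\beta)$ to the relevant classes are legitimate because $\alpha\circ f\in P(T)$ and $\beta\circ\alpha$ is again a $\CatP$-morphism (composites of monoid morphisms that preserve the relation $\prec$ clearly preserve the relation). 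Since this holds on every class $[f]$, we conclude $\tau(\beta)\circ\tau(\alpha)=\tau(\beta\circ\alpha)$.

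There is essentially no obstacle here: the content of functoriality lives entirely in \autoref{prp:path:pathinCu} and \autoref{prp:path:tauOfMorphism}, which establish that the images are objects and morphisms of $\CatCu$. The remaining identities are forced by the fact that $\tau(\alpha)$ acts on representatives simply by postcomposition, $f\mapsto\alpha\circ f$, so functoriality of $\tau$ is inherited verbatim from the functoriality of postcomposition. The only point deserving a word is the well-definedness already dispatched in \autoref{prp:path:tauOfMorphism}; with that in hand the proof is immediate.
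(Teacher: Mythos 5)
Your proof is correct and follows exactly the route the paper takes: the paper simply asserts that $\tau(\id_S)=\id_{\tau(S)}$ and $\tau(\alpha\circ\beta)=\tau(\alpha)\circ\tau(\beta)$ follow easily from the construction, and your pointwise computations on representatives ($\tau(\alpha)([f])=[\alpha\circ f]$, together with associativity of composition) are precisely the routine verifications being alluded to, with well-definedness delegated to the same two preceding results.
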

\begin{proof}
It follow easily from the construction that $\tau(\id_{S})=\id_{\tau(S)}$ for every $\CatP$-semigroup $S$.
It is also straightforward to check that $\tau(\alpha\circ\beta)=\tau(\alpha)\circ\tau(\beta)$ for every pair of composable $\CatP$-morphisms $\alpha$ and $\beta$.
This shows that the $\tau$-construction defines a covariant functor, as claimed.
\end{proof}

Although the $\tau$-construction is a useful tool to derive $\CatCu$-semigroups from such simple objects as $\CatP$-semigroups, the next example shows that without additional care the $\tau$-construction may just produce a trivial object.

\begin{exa}
\label{exa:path:noPaths}
Consider $\NN=\{0,1,2\ldots\}$ with the usual structure as a monoid.
We define $\prec$ on $\NN$ by setting $k\prec l$ if $k<l$ or $k=l=0$.
It is easy to check that $(\NN,\prec)$ is a $\CatP$-semigroup, and that the only path in $P(\NN,\prec)$ is the constant path with value $0$.
It follows that $\tau(\NN,\prec)\cong\{0\}$.
\end{exa}

\section{The category \texorpdfstring{$\CatQ$}{Q}}
\label{sec:Q}

The category $\CatP$ introduced in the previous section, though useful in certain situations to construct $\CatCu$-semigroups from semigroups with very little structure, is too general to provide a nice categorical relation from $\CatP$ to  $\CatCu$. In this section we introduce a subcategory of $\CatP$, which we denote by $\CatQ$, where $\CatCu$ can be embedded as a full subcategory, and in such a way that the restriction of the $\tau$-construction from \autoref{sec:path} defines a coreflection $\CatQ\to\CatCu$;
see \autoref{thm:Q:coreflection}.

Recall the definition of an additive auxiliary relation from \autoref{dfn:prelim:auxRel}.

\begin{dfn}
\label{dfn:Q:CatQ}
A \emph{$\CatQ$-semigroup} is a \pom{} $S$ together with an additive, auxiliary relation $\prec$ on $S$ such that the following conditions are satisfied:
\begin{itemize}
\item[\axiomO{1}]
Every increasing sequence $(a_n)_n$ in $S$ has a supremum $\sup_n a_n$ in $S$.
\item[\axiomO{4}]
If $(a_n)_n$ and $(b_n)_n$ are increasing sequences in $S$, then $\sup_n(a_n+b_n)=\sup_n a_n+\sup_n b_n$.
\end{itemize}

Given $\CatQ$-semigroups $S$ and $T$, a \emph{$\CatQ$-morphism} from $S$ to $T$ is a map $S\to T$ that preserves addition, order, the zero element, the auxiliary relation and suprema of increasing sequences.
We denote the set of $\CatQ$-morphisms by $\CatQMor(S,T)$.
A \emph{generalized $\CatQ$-morphism} is a map that preserves addition, order, the zero element and suprema of increasing sequences.
We denote the set of generalized $\CatQ$-morphisms by $\CatQMor[S,T]$.

We let $\CatQ$ be the category whose objects are $\CatQ$-semigroups and whose morphisms are $\CatQ$-morphisms.
\end{dfn}

\begin{rmks}
\label{rmk:Q:CatQ}
(1)
Axioms \axiomO{1} and \axiomO{4} in \autoref{dfn:Q:CatQ} are the same as in \autoref{dfn:prelim:CatCu}.
A generalized $\CatQ$-morphism is a $\CatQ$-morphism if and only if it preserves the auxiliary relation.
Moreover, generalized $\CatQ$-morphisms are precisely the Scott continuous $\CatP$-morphisms.
(See \cite[Proposition~II-2.1, p.157]{GieHof+03Domains}.)

(2)
Let $S$, $T$ be $\CatQ$-semigroups.
The sets $\CatQMor[S,T]$ and $\CatQMor(S,T)$ of (generalized) $\CatQ$-morphism are \pom{s}, when equipped with the pointwise addition and order.
It is easy to see that $\CatQMor[S,T]$ satisfies \axiomO{1} and \axiomO{4}.
\end{rmks}

\begin{pgr}
\label{pgr:Q:inclusionCuQ}
We define a functor $\iota\colon\CatCu\to\CatQ$ as follows:
Given a $\CatCu$-semigroup $S$, the (sequential) way-below relation $\ll$ is an additive auxiliary relation on $S$.
It follows that $(S,\ll)$ is a $\CatQ$-semigroup, and we let $\iota$ map $S$ to $(S,\ll)$.

Further, given $\CatCu$-semigroups $S$ and $T$, a map $\varphi\colon S\to T$ is a $\CatCu$-morphism if and only if $\varphi\colon(S,\ll)\to(T,\ll)$ is a $\CatQ$-morphism.
We let $\iota$ map a $\CatCu$-morphism to itself, considered as a $\CatQ$-morphism.
This defines a functor from $\CatCu$ to $\CatQ$.
\end{pgr}

\begin{prp}
\label{prp:Q:fullCuQ}
The functor $\iota\colon\CatCu\to\CatQ$ from \autoref{pgr:Q:inclusionCuQ} embeds $\CatCu$ as a full subcategory of $\CatQ$.
\end{prp}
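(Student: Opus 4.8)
The plan is to verify directly the three conditions defining a full embedding—injectivity on objects, faithfulness, and fullness—assembling observations already recorded in \autoref{pgr:Q:inclusionCuQ}. The functor $\iota$ is well-defined by that paragraph: it sends a $\CatCu$-semigroup $S$ to the $\CatQ$-semigroup $(S,\ll)$, and it sends a $\CatCu$-morphism to the same underlying map regarded as a $\CatQ$-morphism. So nothing remains to check about well-definedness, and the proof is purely a matter of comparing the two notions of morphism and of object.

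First I would treat injectivity on objects. The key point is that the way-below relation $\ll$ on a $\CatCu$-semigroup is not extra data: by \autoref{dfn:prelim:waybelow} it is determined entirely by the underlying order together with suprema of increasing sequences. Hence the assignment $S\mapsto(S,\ll)$ neither forgets nor adds structure. Since a $\CatCu$-semigroup is, by \autoref{dfn:prelim:CatCu}, simply a \pom{} satisfying \axiomO{1}--\axiomO{4}, two distinct $\CatCu$-semigroups already differ as \pom{s}, and thus their images $(S,\ll)$ differ as $\CatQ$-semigroups; so $\iota$ is injective on objects. Faithfulness is immediate, since $\iota$ acts as the identity on the underlying maps, so distinct $\CatCu$-morphisms stay distinct.

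The substance of the statement is fullness, and here I would invoke the characterization already noted in \autoref{pgr:Q:inclusionCuQ}: a map $\varphi\colon S\to T$ between $\CatCu$-semigroups is a $\CatCu$-morphism if and only if $\varphi\colon(S,\ll)\to(T,\ll)$ is a $\CatQ$-morphism. The content is a verbatim comparison of the two preservation lists: a $\CatCu$-morphism preserves addition, order, $0$, suprema of increasing sequences, and $\ll$ (\autoref{dfn:prelim:CatCu}), whereas a $\CatQ$-morphism preserves addition, order, $0$, suprema of increasing sequences, and the auxiliary relation (\autoref{dfn:Q:CatQ}). Because the auxiliary relation carried by $\iota(S)$ is precisely $\ll$, the two requirements coincide, so $\CatCuMor(S,T)$ and $\CatQMor(\iota(S),\iota(T))$ agree as subsets of the maps $S\to T$. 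Consequently $\iota$ induces a bijection $\CatCuMor(S,T)\cong\CatQMor(\iota(S),\iota(T))$, which is exactly fullness (and re-proves faithfulness).

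I expect no genuine obstacle. The single point worth flagging is that one must use that the auxiliary relation on $\iota(S)$ is $\ll$ itself, so that ``preserving the auxiliary relation'' in $\CatQ$ translates word-for-word into ``preserving $\ll$'' in $\CatCu$; granting that, everything reduces to matching the two definitions, which is why the statement is asserted to follow ``clearly'' from the preceding discussion.
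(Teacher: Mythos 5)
Your proposal is correct and takes essentially the same route as the paper, which states the proposition as an immediate consequence of \autoref{pgr:Q:inclusionCuQ}, where the decisive fact is recorded: a map $\varphi\colon S\to T$ is a $\CatCu$-morphism if and only if $\varphi\colon(S,\ll)\to(T,\ll)$ is a $\CatQ$-morphism, precisely because the auxiliary relation carried by $\iota(S)$ is $\ll$ itself. Your explicit verification of injectivity on objects and faithfulness merely spells out what the paper leaves implicit; the substance—the word-for-word match of the two preservation lists—is identical.
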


Every $\CatQ$-semigroup can be considered as a $\CatP$-semigroup by forgetting its partial order.
Therefore, if $S$ is a $\CatQ$-semigroup with auxiliary relation $\prec$, then a \emph{path} in $S$ is a map $f\colon I_\QQ\to S$ such that $f(\lambda')\prec f(\lambda)$ whenever $\lambda',\lambda\in I_\QQ$ satisfy $\lambda'<\lambda$;
see \autoref{dfn:path:path} and \autoref{ntn:path:standardPathType}.
Recall that $P(S)$ denotes the set of paths in $S$.

\begin{dfn}
\label{dfn:Q:endpoint}
Let $S$ be a $\CatQ$-semigroup, and let $f\in P(S)$.
We define the \emph{endpoint} of $f$, denoted by $f(1)$, as $f(1) := \sup_{\lambda\in I_\QQ} f(\lambda)$.
\end{dfn}

\begin{prp}
\label{prp:Q:endpoint}
Let $S$ be a $\CatQ$-semigroup, and let $f,g\in P(S)$.
Then:
\begin{enumerate}
\item
We have $(f+g)(1)=f(1)+g(1)$ in $S$.
\item
If $f\precsim g$, then $f(1)\leq g(1)$ in $S$.
\item
If $[f]\ll[g]$ in $\tau(S)$, then $f(1)\prec g(1)$.
\item
If $([f_n])_n$ is an increasing sequence in $\tau(S)$ and $[f]=\sup_n [f_n]$, then $f(1)=\sup_n f_n(1)$ in $S$.
\end{enumerate}
\end{prp}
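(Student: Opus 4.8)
The plan is to establish the four assertions in order, relying throughout on a single preliminary observation: since $\prec$ refines $\leq$ by \autoref{dfn:prelim:auxRel}(1), every path $f\in P(S)$ is $\leq$-increasing, so its endpoint $f(1)=\sup_{\lambda\in I_\QQ}f(\lambda)$ exists by \axiomO{1} and may be computed along any cofinal sequence in $I_\QQ$, for instance $f(1)=\sup_n f(\tfrac{n}{n+1})$. Parts~(1) and~(2) are then routine. For~(1) I would evaluate along this cofinal sequence and apply \axiomO{4} to the increasing sequences $(f(\tfrac{n}{n+1}))_n$ and $(g(\tfrac{n}{n+1}))_n$:
\[
(f+g)(1)=\sup_n\big(f(\tfrac{n}{n+1})+g(\tfrac{n}{n+1})\big)=\sup_n f(\tfrac{n}{n+1})+\sup_n g(\tfrac{n}{n+1})=f(1)+g(1).
\]
For~(2), given $f\precsim g$, for each $\lambda\in I_\QQ$ I pick $\mu$ with $f(\lambda)\prec g(\mu)$, whence $f(\lambda)\leq g(\mu)\leq g(1)$; taking the supremum over $\lambda$ yields $f(1)\leq g(1)$.

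Part~(3) carries the real content, as it requires upgrading an inequality to the strict relation $\prec$. By \autoref{prp:path:pathwaybelow}, the hypothesis $[f]\ll[g]$ provides some $\mu\in I_\QQ$ with $f\prec g(\mu)$, that is, $f(\nu)\prec g(\mu)$ and hence $f(\nu)\leq g(\mu)$ for all $\nu\in I_\QQ$; taking the supremum gives $f(1)\leq g(\mu)$. This alone only yields $f(1)\leq g(1)$, so the decisive step is to interpose a value of $g$ strictly beyond $\mu$: choosing a rational $\mu'$ with $\mu<\mu'<1$, the path property of $g$ gives $g(\mu)\prec g(\mu')$, while $g(\mu')\leq g(1)$. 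Applying \autoref{dfn:prelim:auxRel}(2) to the chain $f(1)\leq g(\mu)\prec g(\mu')\leq g(1)$ then delivers $f(1)\prec g(1)$, as required. I expect this interpolation, together with keeping straight the two notational conventions $f\prec g(\mu)$ versus $f(\nu)\prec g(\mu)$, to be the main obstacle of the proof.

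For~(4), one inequality is immediate from the earlier parts: since $([f_n])_n$ is increasing, part~(2) shows that $(f_n(1))_n$ is increasing with $f_n(1)\leq f(1)$, so $\sup_n f_n(1)\leq f(1)$. For the reverse inequality I would feed the data into \autoref{prp:path:pathO1}, which produces a path $h\in P(S)$ and a (cofinal) strictly increasing sequence $(\lambda_m)_m$ in $I_\QQ$ such that $[h]=\sup_n[f_n]=[f]$ and $h(\tfrac{m}{m+1})=f_m(\lambda_m)$ for all $m$. Since $f\sim h$, part~(2) applied in both directions gives $f(1)=h(1)$, and computing the endpoint of $h$ along the cofinal sequence together with $f_m(\lambda_m)\leq f_m(1)$ yields
\[
f(1)=h(1)=\sup_m f_m(\lambda_m)\leq\sup_m f_m(1)\leq f(1),
\]
forcing equality throughout and completing the argument.
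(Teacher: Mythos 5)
Your proposal is correct and takes essentially the same route as the paper's proof: parts~(1) and~(2) from \axiomO{4} and the definition of $\precsim$, part~(3) from \autoref{prp:path:pathwaybelow} followed by the interpolation $f(1)\leq g(\mu)\prec g(\mu')\leq g(1)$ (which the paper compresses to $f(1)\leq g(\mu)\prec g(1)$), and part~(4) from \autoref{prp:path:pathO1} with the cofinal evaluation $h(\tfrac{m}{m+1})=f_m(\lambda_m)$ sandwiched between the two inequalities. Your extra care in justifying that endpoints can be computed along the cofinal sequence $(\tfrac{n}{n+1})_n$ is a harmless elaboration of a point the paper leaves implicit.
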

\begin{proof}
(1):
This is a consequence of the fact that $S$ satisfies \axiomO{4}.

(2):
Given $\lambda\in I_\QQ$, using that $f\precsim g$, there is $\mu\in I_\QQ$ with $f(\lambda)\prec g(\mu)\leq g(1)$.
Taking the supremum over $\lambda$, we obtain that $f(1)\leq g(1)$.

(3):
Assuming $[f]\ll[g]$, we use \autoref{prp:path:pathwaybelow} to choose $\mu\in I_\QQ$ with $f\prec g(\mu)$.
Then $f(1)\leq g(\mu)\prec g(1)$.

(4):
Let $([f_n])_n$ be an increasing sequence in $\tau(S)$, and let $[f]=\sup_n [f_n]$.
By (2), the endpoint of a path only depends on its equivalence class with respect to the relation $\sim$ from \autoref{dfn:path:path}.

By \autoref{prp:path:pathO1}, there are $f'\in P(S)$ and an increasing sequence $(\lambda_m)_m$ in $I_\QQ$ such that $\sup_m\lambda_m=1$ and $[f']=\sup_n[f_n]$, and such that $f'(\tfrac{n}{n+1})=f_n(\lambda_n)$ for all $n\in\NN$.
Using that $f'\sim f$ at the first step, and using the above property of $f'$ at the fourth step we obtain that
\[
f(1)
=f'(1)
=\sup_{\lambda\in I_\QQ} f'(\lambda)
=\sup_n f'(\tfrac{n}{n+1})
=\sup_n f_n(\lambda_n)
\leq\sup_n f_n(1).
\]
For each $n$, we have $f_n\precsim f$ and therefore $f_n(1)\leq f(1)$ by~(2).
It follows that $\sup_n f_n(1)\leq f(1)$, and therefore $f(1)=\sup_n f_n(1)$, as desired.
\end{proof}

By \autoref{prp:Q:endpoint}, the endpoint of a path only depends on the equivalence class in $\tau(S)$.
Therefore, the following definition makes sense.

\begin{dfn}
\label{dfn:Q:endpointMap}
Let $S$ be a $\CatQ$-semigroup.
We define a map $\varphi_S\colon\tau(S)\to S$ by
\[
\varphi_S([f]) := f(1),
\]
for all $f\in P(S)$.
We refer to $\varphi_S$ as the \emph{endpoint map}.
\end{dfn}

\begin{prp}
\label{prp:Q:endpointnatural}
Let $S$ be a $\CatQ$-semigroup.
Then the endpoint map $\varphi_S\colon\tau(S)\to S$ is a well-defined $\CatQ$-morphism (when considering $\tau(S)$ as a $\CatQ$-morphism via the inclusion functor $\iota$ from \autoref{pgr:Q:inclusionCuQ}.)

Moreover, the endpoint map is natural in the sense that $\alpha\circ\varphi_S=\varphi_T\circ\tau(\alpha)$ for every $\CatQ$-morphism $\alpha\colon S\to T$ between $\CatQ$-semigroups $S$ and $T$.
This means that the following diagram commutes:
\[
\xymatrix{
\tau(S) \ar[r]^-{\varphi_S}\ar[d]_{\tau(\alpha)} &  S\ar[d]^{\alpha} & \\
\tau(T)\ar[r]_-{\varphi_T} & T
}
\]
\end{prp}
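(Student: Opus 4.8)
The plan is to read off the first assertion directly from the four properties of endpoints already recorded in \autoref{prp:Q:endpoint}, and then to deduce naturality from the single fact that every $\CatQ$-morphism preserves suprema of increasing sequences. Since $\tau(S)$ is a $\CatCu$-semigroup by \autoref{prp:path:pathinCu}, under $\iota$ it carries the auxiliary relation $\ll$, so what I must check is that $\varphi_S$ preserves the zero element, addition, order, the passage from $\ll$ on $\tau(S)$ to $\prec$ on $S$, and suprema of increasing sequences.

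First I would note that $\varphi_S$ is well defined: if $[f]=[g]$ then $f\precsim g$ and $g\precsim f$, so applying \autoref{prp:Q:endpoint}(2) twice gives $f(1)=g(1)$ (this is the observation made just before \autoref{dfn:Q:endpointMap}). The remaining properties then match the items of \autoref{prp:Q:endpoint} one by one: preservation of the zero element is immediate, since the zero path has endpoint $\sup_\lambda 0=0$; preservation of addition is \autoref{prp:Q:endpoint}(1); preservation of order follows from \autoref{prp:Q:endpoint}(2), because $[f]\leq[g]$ means $f\precsim g$; preservation of the auxiliary relation, i.e.\ $[f]\ll[g]$ implies $\varphi_S([f])\prec\varphi_S([g])$, is exactly \autoref{prp:Q:endpoint}(3); and preservation of suprema of increasing sequences is \autoref{prp:Q:endpoint}(4). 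This settles that $\varphi_S$ is a $\CatQ$-morphism.

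For naturality, I would fix a $\CatQ$-morphism $\alpha\colon S\to T$ and a path $f\in P(S)$, and simply unwind both sides. On one side,
\[
(\alpha\circ\varphi_S)([f])=\alpha(f(1))=\alpha\Big(\sup_{\lambda\in I_\QQ}f(\lambda)\Big),
\]
while, using $\tau(\alpha)([f])=[\alpha\circ f]$ from \autoref{prp:path:tauOfMorphism},
\[
(\varphi_T\circ\tau(\alpha))([f])=\varphi_T([\alpha\circ f])=(\alpha\circ f)(1)=\sup_{\lambda\in I_\QQ}\alpha(f(\lambda)).
\]
Thus the desired identity reduces to $\alpha\big(\sup_\lambda f(\lambda)\big)=\sup_\lambda \alpha(f(\lambda))$. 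To invoke that $\alpha$ preserves suprema of increasing sequences, I would choose a cofinal strictly increasing sequence $\lambda_1<\lambda_2<\cdots$ in $I_\QQ$ with $\sup_n\lambda_n=1$; since $f$ and $\alpha\circ f$ are paths, cofinality gives $f(1)=\sup_n f(\lambda_n)$ and $(\alpha\circ f)(1)=\sup_n\alpha(f(\lambda_n))$, so that $\alpha(f(1))=\alpha(\sup_n f(\lambda_n))=\sup_n\alpha(f(\lambda_n))=(\alpha\circ f)(1)$, which is the claim.

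There is no genuine obstacle in this argument: the first part is pure bookkeeping on top of \autoref{prp:Q:endpoint}, and naturality is a one-line reduction to Scott continuity of $\alpha$. The only place needing a word of care is the replacement of the directed supremum over $I_\QQ$ by a sequential one, which is necessary because $\CatQ$-morphisms are assumed to preserve suprema only of \emph{increasing sequences} rather than of arbitrary directed families; this is precisely what the cofinal sequence $(\lambda_n)_n$ in $I_\QQ$ supplies.
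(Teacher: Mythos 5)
Your proof is correct and takes essentially the same route as the paper's: the first assertion is read off directly from \autoref{prp:Q:endpoint}, and naturality reduces to the identity $\alpha\big(\sup_{\lambda\in I_\QQ} f(\lambda)\big)=\sup_{\lambda\in I_\QQ}\alpha(f(\lambda))$, which the paper likewise justifies by the fact that $\alpha$ preserves suprema of increasing sequences. Your explicit passage through a cofinal increasing sequence $(\lambda_n)_n$ in $I_\QQ$ only spells out the sequential-supremum step that the paper leaves implicit, so it is a clarification rather than a different argument.
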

\begin{proof}
It follows directly from \autoref{prp:Q:endpoint} that $\varphi_S$ is a well-defined $\CatQ$-mor\-phism.
To show the commutativity of the diagram, let $f\in P(S)$.
Using that $\alpha$ preserves suprema of increasing sequences at the second step, we deduce that
\[
\alpha\left( \varphi_S([f]) \right)
=\alpha\left( \sup_{\lambda\in I_\QQ}f(\lambda) \right)
=\sup_{\lambda\in I_\QQ}\alpha(f(\lambda))
=\varphi_T([\alpha\circ f])
=\varphi_T(\tau(\alpha)([f])),
\]
as desired.
\end{proof}

\begin{rmk}
The naturality of the endpoint map as formulated in \autoref{prp:Q:endpointnatural} means precisely that the $\CatQ$-morphisms $\varphi_S$, for $S$ ranging over the objects in $\CatQ$, form the components of a natural transformation from $\iota\circ\tau$ to the identity functor on $\CatQ$.
\end{rmk}

In general, the endpoint map is neither surjective nor injective;
see Examples~\ref{exa:Q:noPaths} and~\ref{exa:Q:M}.
We now show that $\varphi_S$ is an order-isomorphism if (and only if) $S$ is a $\CatCu$-semigroup.

\begin{prp}
\label{prp:Q:phiisisoforCu}
Let $S$ be a $\CatCu$-semigroup, considered as a $\CatQ$-semigroup $(S,\ll)$.
Then the endpoint map $\varphi_S\colon{\tau(S,\ll)}\to S$ is an order-isomorphism.
\end{prp}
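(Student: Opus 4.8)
The plan is to show that $\varphi_S$ is surjective and that it is an \emph{order-embedding}, meaning that $[f]\le[g]$ in $\tau(S,\ll)$ holds if and only if $f(1)\le g(1)$ in $S$; together these give that $\varphi_S$ is an order-isomorphism. Indeed, an order-embedding is automatically injective by antisymmetry of $\le$, and a surjective order-embedding has an order-preserving inverse, so it is an order-isomorphism. Recall from \autoref{prp:Q:endpointnatural} that $\varphi_S$ is already a well-defined $\CatQ$-morphism, hence order-preserving; this yields the forward implication of the order-embedding, which is also recorded in \autoref{prp:Q:endpoint}(2).

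For the reverse implication of the order-embedding, I would assume $f(1)\le g(1)$ and verify $f\precsim g$ directly. Fix $\lambda\in I_\QQ$ and choose $\lambda''\in I_\QQ$ with $\lambda<\lambda''$. Since $f$ is a path, $f(\lambda)\ll f(\lambda'')$, and since $f(\lambda'')\le f(1)\le g(1)$, condition~(2) of \autoref{dfn:prelim:auxRel} gives $f(\lambda)\ll g(1)$. Now $g(1)=\sup_{\mu\in I_\QQ}g(\mu)$ is a supremum of an increasing sequence, obtained along any sequence in $I_\QQ$ increasing to $1$, so by the definition of $\ll$ there is $\mu_0\in I_\QQ$ with $f(\lambda)\le g(\mu_0)$. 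Choosing $\mu_1\in I_\QQ$ with $\mu_0<\mu_1$ and using that $g$ is a path, we obtain $f(\lambda)\le g(\mu_0)\ll g(\mu_1)$, whence $f(\lambda)\ll g(\mu_1)$ again by \autoref{dfn:prelim:auxRel}(2). As $\lambda$ was arbitrary, this shows $f\precsim g$, that is, $[f]\le[g]$.

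For surjectivity, given $a\in S$ I would invoke \autoref{prp:prelim:ctsO2} to obtain a family $(a_\lambda)_{\lambda\in(0,1]}$ in $S$ with $a_1=a$, with $a_{\lambda'}\ll a_\lambda$ whenever $\lambda'<\lambda$, and with $a_\lambda=\sup_{\lambda'<\lambda}a_{\lambda'}$ for all $\lambda$. Restricting the index set to $I_\QQ$ yields a path $f\in P(S,\ll)$ defined by $f(\lambda)=a_\lambda$. Since the rationals are cofinal in $(0,1)$ and the chain is increasing, $f(1)=\sup_{\lambda\in I_\QQ}a_\lambda=\sup_{\lambda'<1}a_{\lambda'}=a_1=a$, so $\varphi_S([f])=a$.

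The genuinely $\CatCu$-specific input is the surjectivity step: it is exactly \axiomO{2}, in the strengthened chain form of \autoref{prp:prelim:ctsO2}, that produces enough paths to hit every element of $S$, and this is what fails for a general $\CatQ$-semigroup. I expect this to be the crux, whereas the order-embedding argument is a routine manipulation of the way-below relation. A minor point to handle with care is that every endpoint $\sup_{\lambda\in I_\QQ}f(\lambda)$ must be realized as a supremum of an \emph{increasing sequence}, so that \axiomO{1} and the definition of $\ll$ apply; this causes no trouble, since any sequence in $I_\QQ$ increasing to $1$ is cofinal in the path.
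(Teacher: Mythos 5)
Your proposal is correct and follows essentially the same route as the paper: the order-embedding is proved by the same interpolation argument (deduce $f(\lambda)\ll f(\lambda'')\leq g(1)$, extract $\mu_0$ with $f(\lambda)\leq g(\mu_0)$ from a cofinal increasing sequence in $I_\QQ$, then upgrade to $f(\lambda)\ll g(\mu_1)$ via \autoref{dfn:prelim:auxRel}(2)), and surjectivity is obtained exactly as in the paper by restricting the $\ll$-increasing chain of \autoref{prp:prelim:ctsO2} to rational indices. Your explicit remark that the supremum $\sup_{\mu\in I_\QQ}g(\mu)$ must be realized along an increasing \emph{sequence} so that the sequential definition of $\ll$ and \axiomO{1} apply is left implicit in the paper, but it is the same argument.
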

\begin{proof}
We first prove that $\varphi_S$ is an order-embedding.
Let $[f],[g]\in\tau(S,\ll)$ satisfy $\varphi_S([f])\leq \varphi_S([g])$.
Then, by definition, $\sup_{\mu} f(\mu) \leq \sup_{\mu} g(\mu)$.
To show that $f\precsim g$, let $\lambda\in I_\QQ$.
Choose $\tilde\lambda\in I_\QQ$ with $\lambda<\tilde\lambda$.
We deduce that
\[
f(\lambda) \ll f(\tilde\lambda) \leq \sup_{\mu} f(\mu) \leq \sup_{\mu} g(\mu).
\]
Therefore, there exists $\mu\in I_\QQ$ such that $f(\lambda)\leq g(\mu)$.
Choose $\tilde{\mu}\in I_\QQ$ with $\mu<\tilde{\mu}$.
Then $f(\lambda)\ll g(\tilde\mu)$.
This implies that $f\precsim g$ and thus $[f]\leq [g]$, as desired.

To show that $\varphi_S$ is surjective, let $s\in S$.
Choose a $\ll$-increasing chain $(s_\lambda)_{\lambda\in (0,1)}$ as in \autoref{prp:prelim:ctsO2}.
In particular, we have $s=\sup_\lambda s_\lambda$, and $s_{\lambda'}\ll s_\lambda$ whenever $\lambda',\lambda\in I_\QQ$ satisfy $\lambda'<\lambda$.
Thus, if we define $f\colon I_\QQ\to S$ by $f(\lambda):=s_\lambda$, for $\lambda\in I_\QQ$, then $f$ belongs to $P(S,\ll)$.
By construction, $\varphi_S([f])=s$, as desired.
\end{proof}

Given $\CatQ$-semigroups $S$ and $T$, recall that we equip the set of $\CatQ$-morphisms $\CatQMor(S,T)$ with pointwise order and addition;
see \autoref{rmk:Q:CatQ}.

\begin{prp}
\label{prp:Q:univpropertytau}
Let $T$ be a $\CatCu$-semigroup, let $S$ be a $\CatQ$-semigroup, and let $\varphi_S\colon\tau(S)\to S$ be the endpoint map from \autoref{dfn:Q:endpointMap}.
Then:
\begin{enumerate}
\item
For every $\CatQ$-morphism $\alpha\colon T\to S$ there exists a $\CatCu$-morphism $\bar{\alpha}\colon T\to\tau(S)$ such that $\varphi_S\circ\bar{\alpha}=\alpha$.
\item
We have $\varphi_S\circ\beta\leq\varphi_S\circ\gamma$ if and only if $\beta\leq\gamma$, for any pair of $\CatCu$-morphisms $\beta,\gamma\colon T\to\tau(S)$.
\end{enumerate}
Statement~(1) means that for every $\alpha$ one can find $\bar{\alpha}$ making the following diagram commute:
\[
\xymatrix{
\tau(S) \ar[r]^-{\varphi_S} & S \\
& T \ar@{-->}[ul]^{\bar\alpha} \ar[u]_{\alpha}.
}
\]
\end{prp}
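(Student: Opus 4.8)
The plan is to exploit that $\varphi_T$ is invertible (since $T\in\CatCu$) to transport everything through the $\tau$-construction, reducing both statements to the naturality of the endpoint map together with a single identity in $\tau(S)$.

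For (1): by \autoref{prp:Q:phiisisoforCu} the endpoint map $\varphi_T\colon\tau(T,\ll)\to T$ is an order-isomorphism; since both sides are $\CatCu$-semigroups and $\varphi_T$ preserves addition (being a $\CatQ$-morphism), its inverse $\varphi_T^{-1}$ is again a $\CatCu$-morphism. Given a $\CatQ$-morphism $\alpha\colon T\to S$, I would set $\bar\alpha:=\tau(\alpha)\circ\varphi_T^{-1}$, which is a composite of $\CatCu$-morphisms by \autoref{prp:path:tauOfMorphism}. The identity $\varphi_S\circ\bar\alpha=\alpha$ then follows from the naturality square of \autoref{prp:Q:endpointnatural}, namely $\varphi_S\circ\tau(\alpha)=\alpha\circ\varphi_T$, so that $\varphi_S\circ\bar\alpha=\alpha\circ\varphi_T\circ\varphi_T^{-1}=\alpha$.

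For (2), the ``if'' direction is immediate from monotonicity of $\varphi_S$. For ``only if'' I would first establish the \emph{factorization identity}
\[
\beta=\tau(\varphi_S\circ\beta)\circ\varphi_T^{-1}
\]
for every $\CatCu$-morphism $\beta\colon T\to\tau(S)$, so that $\beta$ is exactly the canonical lift of $\varphi_S\circ\beta$ produced in (1). Granting this, note that $\tau$ is order-preserving on morphisms: if $\psi_1\le\psi_2$ pointwise then $\psi_1\circ h\precsim\psi_2\circ h$ for every $h\in P(T,\ll)$, since $\psi_1(h(\sigma))\le\psi_2(h(\sigma))\prec\psi_2(h(\nu))$ for $\nu>\sigma$. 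Hence from $\varphi_S\circ\beta\le\varphi_S\circ\gamma$ we get $\tau(\varphi_S\circ\beta)\le\tau(\varphi_S\circ\gamma)$, and composing on the right with $\varphi_T^{-1}$ (which preserves pointwise order automatically) yields $\beta\le\gamma$.

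It remains to prove the factorization identity, which is the heart of the argument. As $\varphi_T$ is bijective, it suffices to check $\beta\circ\varphi_T=\tau(\varphi_S\circ\beta)$ on a class $[h]$ with $h\in P(T,\ll)$. The left-hand side equals $\beta(h(1))=\sup_\lambda\beta(h(\lambda))=:\Phi$, using that $\beta$ preserves suprema of increasing sequences; the right-hand side is $[\varphi_S\circ\beta\circ h]=:[w]$, where $w(\lambda)=\varphi_S(\beta(h(\lambda)))$. Writing $\beta(h(\lambda))=[f^\lambda]$ with $f^\lambda\in P(S)$, the map $\lambda\mapsto[f^\lambda]$ is a $\ll$-path in $\tau(S)$ and $w(\lambda)=f^\lambda(1)$. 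I would prove $\Phi=[w]$ by two inequalities. For $\Phi\le[w]$: fixing $\lambda$ and picking $\lambda'>\lambda$, one has $[f^\lambda]\ll[f^{\lambda'}]$, so by \autoref{prp:path:pathwaybelow} $f^\lambda\prec f^{\lambda'}(\mu)\le f^{\lambda'}(1)=w(\lambda')$, whence $[f^\lambda]\le[w]$ and, taking suprema, $\Phi\le[w]$. The main obstacle is the reverse inequality $[w]\le\Phi$, because the auxiliary relation $\prec$ on $S$ need not interact with suprema, so one cannot pass directly from $w'\prec w(\mu)=f^\mu(1)=\sup_\tau f^\mu(\tau)$ to $w'\prec f^\mu(\tau)$. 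The device is to apply \autoref{prp:path:pathwaybelow} a second time: given $[w']\ll[w]$, choose $\mu$ with $w'\prec w(\mu)=f^\mu(1)$, then pick $\mu'>\mu$; from $[f^\mu]\ll[f^{\mu'}]$ we get $f^\mu\prec f^{\mu'}(\tau)$ for some $\tau$, hence $f^\mu(1)=\sup_\rho f^\mu(\rho)\le f^{\mu'}(\tau)$, and therefore $w'\prec f^{\mu'}(\tau)$, i.e.\ $[w']\ll[f^{\mu'}]\le\Phi$. Since $[w]=\sup\{[w']:[w']\ll[w]\}$ by \axiomO{2}, this gives $[w]\le\Phi$ and completes the identity, and with it the proposition.
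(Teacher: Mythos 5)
Your proposal is correct, and part~(1) coincides with the paper's proof verbatim: the same lift $\bar\alpha:=\tau(\alpha)\circ\varphi_T^{-1}$, justified by \autoref{prp:Q:phiisisoforCu} and the naturality square of \autoref{prp:Q:endpointnatural}. For part~(2), however, you take a genuinely different route. The paper argues pointwise: given $t\in T$, it uses \axiomO{2} to write $t=\sup_n t_n$ with $t_n\ll t$, represents $\beta(t_n)=[f_n]$, $\gamma(t_n)=[g_n]$, $\gamma(t)=[g]$, exploits that $\gamma$ preserves $\ll$ together with \autoref{prp:path:pathwaybelow} to get $g_n(1)\leq g(\mu)$, and then chains $f_n(\lambda)\leq f_n(1)=\varphi_S(\beta(t_n))\leq\varphi_S(\gamma(t_n))=g_n(1)\leq g(\mu)$ to conclude $\beta(t_n)\leq\gamma(t)$. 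You instead prove the stronger factorization identity $\beta=\tau(\varphi_S\circ\beta)\circ\varphi_T^{-1}$, i.e.\ that every $\CatCu$-morphism into $\tau(S)$ \emph{is} the canonical lift of its endpoint composite, and then deduce the order-embedding from monotonicity of $\tau$ on morphisms. Your key computation is sound: both inequalities $\Phi\leq[w]$ and $[w]\leq\Phi$ check out, and your double application of \autoref{prp:path:pathwaybelow} (passing from $\mu$ to $\mu'>\mu$ so as to dominate the supremum $f^\mu(1)$ by a single element $f^{\mu'}(\tau)$ of a later path) correctly circumvents the same obstruction --- that $\prec$ need not interact with suprema --- which the paper sidesteps by comparing $\beta(t_n)$ against $\gamma(t)$ rather than $\gamma(t_n)$. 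What each approach buys: the paper's argument is shorter and more local; yours yields a reusable uniqueness statement, namely that the lift in~(1) is unique and that the correspondence $\beta\mapsto\varphi_S\circ\beta$ of \autoref{thm:Q:coreflection} is a bijection with an explicit inverse, rather than extracting injectivity only as a corollary of the order-embedding. One cosmetic remark: in the final display you use $\tau$ both for the functor and as a path parameter; rename the index to avoid the collision.
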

\begin{proof}
To show~(1), let $\alpha$ be given.
Since $T$ is a $\CatCu$-semigroup, it follows from \autoref{prp:Q:phiisisoforCu} that $\varphi_T\colon\tau(T,\ll)\to T$ is an order-isomorphism.
Set $\bar\alpha:=\tau(\alpha)\circ\varphi_T^{-1}$, which is clearly a $\CatCu$-morphism. By \autoref{prp:Q:endpointnatural}, we have $\varphi_S\circ\tau(\alpha)=\alpha\circ\varphi_T$. It follows that $\varphi_S\circ\bar\alpha=\alpha$.
The maps are shown in the following diagram:
\[
\xymatrix{
\tau(S) \ar[r]^-{\varphi_S} & S   \\
\tau(T,\ll)\ar[u]^{\tau(\alpha)}\ar[r]_-{\varphi_T}
& T. \ar[u]_\alpha\ar@{-->}@/^1pc/[ul]_{\bar\alpha}
}
\]

To show~(2), let $\beta,\gamma\colon T\to \tau(S)$ be $\CatCu$-morphisms.
It is clear that $\beta\leq\gamma$ implies that $\varphi_S\circ\beta\leq\varphi_S\circ\gamma$.
Thus let us assume that $\varphi_S\circ\beta\leq\varphi_S\circ\gamma$.

To show that $\beta\leq\gamma$, let $t\in T$.
Using that $T$ satisfies \axiomO{2}, choose a $\ll$-increasing sequence $(t_n)_n$ in $T$ with supremum $t$.
Fix $n$, and choose paths $f_n,g_n,g\in P(S)$ with $\beta(t_n)=[f_n]$, and $\gamma(t_n)=[g_n]$, and $\gamma(t)=[g]$.
Since $\gamma$ preserves the way-below relation, we have $[g_n]\ll[g]$ in $\tau(S)$.
By \autoref{prp:path:pathwaybelow}, we can choose $\mu\in I_\QQ$ such that $g_n(\lambda)\prec g(\mu)$ for all $\lambda\in I_\QQ$.
Passing to the supremum over $\lambda$, we obtain that $g_n(1)\leq g(\mu)$.
Using this at the last step, and using the assumption that $\varphi_S\circ\beta\leq\varphi_S\circ\gamma$ at the second step, we deduce that
\[
f_n(\lambda) \leq f_n(1)
= \varphi_S(\beta(t_n))
\leq \varphi_S(\gamma(t_n))
= g_n(1)
\leq g(\mu),
\]
for every $\lambda\in I_\QQ$.
By definition, we have that $f_n\precsim g$, and hence $\beta(t_n)\leq\gamma(t)$.

Using that $\beta$ preserves suprema of increasing sequences at the second step, and using the above observation $\beta(t_n)\leq\gamma(t)$ for each $n$ at the last step, we deduce that
\[
\beta(t)
=\beta\left( \sup_n t_n \right)
=\sup_n\beta(t_n)
\leq\gamma(t),
\]
as desired.
\end{proof}

\begin{thm}
\label{thm:Q:coreflection}
The category $\CatCu$ is a coreflective, full subcategory of $\CatQ$;
the functor $\tau\colon\CatQ\to\CatCu$ is a right adjoint to the inclusion functor $\iota\colon\CatCu\to\CatQ$ from \autoref{pgr:Q:inclusionCuQ}.

More precisely, let $S$ be a $\CatQ$-semigroup, let $\varphi_S\colon\tau(S)\to S$ be the endpoint map from \autoref{dfn:Q:endpointMap}, and let $T$ be a $\CatCu$-semigroup.
Then the assignment 
that sends a $\CatCu$-morphism $\beta\colon T\to\tau(S)$ to the $\CatQ$-morphism $\varphi_S\circ\beta$ defines a natural bijection
\[
\CatCuMor\big( T,\tau(S) \big) \cong \CatQMor(T,S),
\]
which respects the structure of the morphism sets as positively ordered monoids.
\end{thm}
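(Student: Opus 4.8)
The plan is to derive the adjunction almost entirely from the universal property of the endpoint map established in \autoref{prp:Q:univpropertytau}, together with the naturality of the endpoint map from \autoref{prp:Q:endpointnatural}; the coreflection statement is then a repackaging of these facts. First I would check that the assignment $\beta\mapsto\varphi_S\circ\beta$ is well defined as a map from $\CatCuMor(T,\tau(S))$ to $\CatQMor(T,S)$. Viewing the $\CatCu$-semigroup $T$ as the $\CatQ$-semigroup $(T,\ll)$ via $\iota$, a $\CatCu$-morphism $\beta\colon T\to\tau(S)$ is in particular a $\CatQ$-morphism, and $\varphi_S$ is a $\CatQ$-morphism by \autoref{prp:Q:endpointnatural}; hence the composite $\varphi_S\circ\beta$ is a $\CatQ$-morphism from $T$ to $S$, as required.

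For bijectivity I would invoke the two parts of \autoref{prp:Q:univpropertytau} directly. Surjectivity is exactly part~(1): given $\alpha\in\CatQMor(T,S)$, there is a $\CatCu$-morphism $\bar\alpha$ with $\varphi_S\circ\bar\alpha=\alpha$. Injectivity follows from part~(2): if $\varphi_S\circ\beta=\varphi_S\circ\gamma$ for $\CatCu$-morphisms $\beta,\gamma\colon T\to\tau(S)$, then in particular $\varphi_S\circ\beta\leq\varphi_S\circ\gamma$ and $\varphi_S\circ\gamma\leq\varphi_S\circ\beta$, so part~(2) gives $\beta\leq\gamma$ and $\gamma\leq\beta$, whence $\beta=\gamma$ by antisymmetry of the order.

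Next I would verify that the bijection is an isomorphism of positively ordered monoids. The order part is again exactly part~(2) of \autoref{prp:Q:univpropertytau}: we have $\beta\leq\gamma$ if and only if $\varphi_S\circ\beta\leq\varphi_S\circ\gamma$, so both the bijection and its inverse are order-preserving. Since the monoid operations on both morphism sets are defined pointwise, and $\varphi_S$ preserves addition and the zero element (being a $\CatQ$-morphism), one immediately obtains $\varphi_S\circ(\beta+\gamma)=(\varphi_S\circ\beta)+(\varphi_S\circ\gamma)$ and $\varphi_S\circ 0=0$, so the bijection is additive and unital.

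Finally, naturality in both variables is a routine diagram chase. Naturality in $T$ amounts to the identity $(\varphi_S\circ\beta)\circ\psi=\varphi_S\circ(\beta\circ\psi)$ for a $\CatCu$-morphism $\psi\colon T'\to T$, which is just associativity of composition; naturality in $S$ amounts, for a $\CatQ$-morphism $\alpha\colon S\to S'$, to the identity $\alpha\circ\varphi_S\circ\beta=\varphi_{S'}\circ\tau(\alpha)\circ\beta$, which follows from the naturality square $\alpha\circ\varphi_S=\varphi_{S'}\circ\tau(\alpha)$ of \autoref{prp:Q:endpointnatural}. I would emphasize that essentially no obstacle remains at this stage: all the substantive content, namely lifting a $\CatQ$-morphism through $\varphi_S$ and showing that the endpoint map reflects the order on $\CatCu$-morphisms, was already carried out in \autoref{prp:Q:univpropertytau}. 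The only care needed is bookkeeping, keeping track of the functor $\iota$ when regarding $T$ as a $\CatQ$-semigroup and confirming that the two naturality squares commute.
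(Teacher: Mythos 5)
Your proof is correct and follows essentially the same route as the paper's: the paper likewise deduces well-definedness from \autoref{prp:Q:endpointnatural}, surjectivity from part~(1), and the order-embedding property (hence injectivity and the order-isomorphism) from part~(2) of \autoref{prp:Q:univpropertytau}. Your extra verifications of additivity and of the two naturality squares are routine bookkeeping that the paper leaves implicit.
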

\begin{proof}
Let us denote the assignment from the statement by $\Phi\colon\CatCuMor(T,\tau(S)) \to \CatQMor(T,S)$.
Then $\Phi$ is well-defined since $\varphi_S$ is a $\CatQ$-morphism by \autoref{prp:Q:endpointnatural}.
Statement~(1) in \autoref{prp:Q:univpropertytau} means exactly that $\Phi$ is surjective.
Further, statement~(2) in \autoref{prp:Q:univpropertytau} shows that $\Phi$ is an order-embedding.
Thus, $\Phi$ is an order-isomorphism, and in particular bijective.
This shows that $\tau$ is right adjoint to $\iota$, as desired.
\end{proof}

We now consider examples of $\CatQ$-semigroups and their associated endpoint maps.
In \autoref{exa:Q:M} we introduce two important $\CatCu$-semigroups that are obtained by using the $\tau$-construction.
We denote these $\CatCu$-semigroups by $M_1$ and $M_\infty$ since they turn out to be the Cuntz semigroups of $\mathrm{II}_1$- and $\mathrm{II}_\infty$-factors, respectively;
see \autoref{prp:Q:M}.

\begin{exa}
\label{exa:Q:noPaths}
Consider $\NNbar=\{0,1,2,\ldots,\infty\}$ with $\prec$ as in \autoref{exa:path:noPaths}.
Then $\tau(\NNbar,\prec)=\{0\}$, which shows that the endpoint map need not be surjective.
\end{exa}

Recall that, given a $\CatCu$-semigroup $S$ and $a\in S$, we say that $a$ is \emph{soft} if for every $a'\in S$ with $a'\ll a$ we have $a'<_s a$, that is, there exists $k\in\NN$ with $(k+1)a'\leq ka$; see \cite[Definition~5.3.1]{AntPerThi14arX:TensorProdCu}. We denote the set of soft elements in $S$ by $S_\txtSoft$.

\begin{exa}
\label{exa:Q:M}
Consider $\PPbar:=[0,\infty]$, with its usual structure as a \pom{}.
We define two relations $\prec_1$ and $\prec_\infty$ on $\PPbar$ as follows:
given $a,b\in\PPbar$ we set $a\prec_1 b$ if and only if $a<\infty$ and $a\leq b$;
and we set $a\prec_\infty b$ if and only if $a\leq b$.
It is easy to check that $(\PPbar,\prec_1)$ and $(\PPbar,\prec_\infty)$ are $\CatQ$-semigroups.
We set
\[
M_1 := \tau\big( \PPbar,\prec_1 \big),
\quad\text{ and }\quad
M_\infty := \tau\big( \PPbar,\prec_\infty \big).
\]

Let us compute the precise structure of $M_1$ and $M_\infty$.
For the most part, the argument is the same in both cases, and we use $\prec_\ast$ to stand for either $\prec_1$ or $\prec_\infty$.
Recall that $P(\PPbar,\prec_\ast)$ is the set of $\prec_\ast$-increasing map $f\colon I_\QQ\to \PPbar$.
Given a path $f$, we let $f(1)$ denote the endpoint, that is, $f(1)=\sup_{\lambda\in I_\QQ}f(\lambda)$;
see \autoref{dfn:Q:endpoint}.

Let $f,g\in P(\PPbar,\prec_\ast)$.
If $f\precsim g$, then $f(1)\leq g(1)$, by \autoref{prp:Q:endpoint} (2).
Conversely, if $ f(1)<g(1)$, then it is easy to deduce that $f\precsim g$.
In fact, it is clear that the equivalence class of a path only depends on its definition in $(1-\varepsilon,1)\cap I_\QQ$, for some $\varepsilon>0$.
Therefore, all eventually constant paths with the same endpoint are equivalent and they majorize any path with the same endpoint.
Furthermore, two paths with equal endpoint which are not eventually constant are in fact equivalent.

Thus, for every $a\in(0,\infty)$ there are exactly two equivalence classes of paths with endpoint $a$:
the classes $[f_a']$ and $[f_a]$ with $f_a'$ and $f_a$ given by $f_a'(\lambda)=\lambda a$ and $f_a(\lambda)=a$, for $\lambda\in I_\QQ$. The endpoints $0$ and $\infty$ are particular:
The only path with endpoint $0$ is the constant path $f_0$ with value $0$.

The only difference between $\prec_1$ and $\prec_\infty$ appears now for paths with endpoint $\infty$.
There is no $\prec_1$-increasing path that is (eventually) constant with value $\infty$.
Therefore, all paths in $P(\PPbar,\prec_1)$ with endpoint $\infty$ are equivalent to $f_\infty'$ given by $f_\infty'(\lambda)=\tfrac{1}{1-\lambda}$.
On the other hand, $P(\PPbar,\prec_\infty)$ also contains the constant path $f_\infty$ with value $\infty$.
We obtain that
\[
M_1 = \big\{ [f_0],[f_a'],[f_a],[f_\infty'] : a\in(0,\infty) \big\},
\quad\text{ and }\quad
M_\infty = M_1 \cup \big\{ [f_\infty] \big\}.
\]
Thus, $M_1$ and $M_\infty$ differ only in that $M_\infty$ contains an additional infinite element.
It is easy to see that the natural map $M_1\to M_\infty$ is an additive order-embedding.
Hence, it suffices to describe order and addition in $M_\infty$.
We have $[f_0]\leq[f_a']\leq[f_a]\leq[f_\infty']\leq[f_\infty]$ for $a\in(0,\infty)$.
Further, for $a,b\in(0,\infty)$ we have $[f_a']\leq [f_b]$ if and only if $a\leq b$; and $[f_a]\leq[f_b']$ if and only if $a<b$.
We have $[f_\infty']<[f_\infty]$.

It is straightforward to check that the addition in $M_\infty$ is given by
\[
[f_a]+[f_b] = [f_{a+b}],
\quad\text{ and }\quad
[f_a']+[f_b] = [f_a']+[f_b'] = [f_{a+b}'],
\]
for $a\in [0,\infty]$ and $b\in[0,\infty)$.
We have that $[f_\infty']+[f_\infty]=[f_\infty]$.

Abusing notation, we use $a'$ and $a$ to denote $[f_a']$ and $[f_a]$ in $M_\infty$.
Further, we use $0$ to denote the classes of $f_0$. Now, the compact elements in $M_1$ are $0$ and $a$ for $a\in(0,\infty)$.
The soft elements in $M_1$ are $0$ and $a'$ for $a\in(0,\infty]$.
The additional element $\infty$ in $M_\infty$ is both soft and compact.

The endpoint map $M_1\to\PPbar$ is not injective since it sends both $[f_a]$ and $[f_a']$ to $a$, for every $a\in(0,\infty)$.
Analogously, the endpoint map $M_\infty\to\PPbar$ is not injective.
\end{exa}

\begin{prp}
\label{prp:Q:M}
We have $\Cu(M)\cong M_1$ for every $\mathrm{II}_1$-factor $M$;
and we have $\Cu(N)\cong M_\infty$ for every $\mathrm{II}_\infty$-factor $N$.
\end{prp}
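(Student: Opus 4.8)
The plan is to compute both Cuntz semigroups at once by means of the (unique up to a positive scalar) faithful normal semifinite trace on the factor, which I denote by $\mathrm{tr}$; it extends to a faithful normal semifinite trace on $M\otimes\KK$, again written $\mathrm{tr}$. Two standard facts from von Neumann algebra theory carry the bulk of the work. First, for $a\in(M\otimes\KK)_+$ and $\varepsilon>0$ the element $(a-\varepsilon)_+$ is invertible in the hereditary subalgebra cut down by its range projection $q_\varepsilon(a):=\chi_{[\varepsilon,\infty)}(a)$, so that $[(a-\varepsilon)_+]=[q_\varepsilon(a)]$ in $\Cu(M)$ and hence $[a]=\sup_{\varepsilon>0}[q_\varepsilon(a)]$. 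Second, Murray--von Neumann comparison in a factor gives, for projections $p,q\in M\otimes\KK$, that $p\precsim q$ if and only if $\mathrm{tr}(p)\le\mathrm{tr}(q)$, and $p\sim q$ if and only if $\mathrm{tr}(p)=\mathrm{tr}(q)$. I set $d([a]):=\sup_{\varepsilon>0}\mathrm{tr}(q_\varepsilon(a))\in[0,\infty]$. In the type $\mathrm{II}_1$ case every projection of $M\otimes\KK$ has finite trace and every value in $(0,\infty)$ is attained, while in the type $\mathrm{II}_\infty$ case the trace attains every value in $(0,\infty]$ on projections and all projections of infinite trace are equivalent.

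I would first establish a classification of Cuntz classes. Since $[a]=\sup_\varepsilon[q_\varepsilon(a)]$ and each $q_\varepsilon(a)$ is a projection, a class $[a]$ is compact exactly when this supremum is attained, i.e.\ when $[a]=[q_{1/n}(a)]$ for some $n$; thus the compact elements of $\Cu(M)$ are precisely the classes of projections, parametrized via Murray--von Neumann comparison by $\mathrm{tr}(p)$, which ranges over $(0,\infty)$ in the $\mathrm{II}_1$ case and over $(0,\infty]$ in the $\mathrm{II}_\infty$ case (together with $0$). If $[a]$ is not a projection class, then $\mathrm{tr}(q_\varepsilon(a))$ increases strictly to $d([a])$ without attaining it; a short comparison argument then shows that any two such classes sharing the same value of $d$ coincide, and that each is \emph{soft}, being the supremum of the strictly increasing sequence of compact classes $[q_{1/n}(a)]$. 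Conversely every value of $d$ is realized: a projection of trace $t$ for the compact class, and a genuinely soft element such as $\sum_n\lambda_n(p_n-p_{n-1})$ with $\lambda_n\downarrow 0$ and $\mathrm{tr}(p_n)\uparrow t$ for the soft one. This produces a bijection $\beta\colon\Cu(M)\to M_\ast$ ($\ast\in\{1,\infty\}$), sending $0$ to $0$, a nonzero projection class to the compact element $[f_{\mathrm{tr}(p)}]$, and a genuinely soft class to the soft element $[f'_{d([a])}]$, where I use the explicit description of $M_1$ and $M_\infty$ from \autoref{exa:Q:M}. In the $\mathrm{II}_\infty$ case the unique infinite-trace projection class maps to $[f_\infty]$, which is both compact and soft, whereas genuinely soft classes of infinite dimension map to $[f'_\infty]$.

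It then remains to verify that $\beta$ is a $\CatCu$-isomorphism. Because an order-isomorphism of positively ordered monoids automatically preserves suprema of increasing sequences, and hence the way-below relation (which is defined purely in terms of such suprema), it suffices to check that $\beta$ is additive and that both $\beta$ and $\beta^{-1}$ are order-preserving. Additivity is a routine case-check against the addition rules of \autoref{exa:Q:M}, using that $d$ is additive, that projection classes add via $\mathrm{tr}(p\oplus q)=\mathrm{tr}(p)+\mathrm{tr}(q)$, and that a sum fails to be a projection class exactly when it has a genuinely soft summand and no infinite-trace projection summand. For the order one runs through the four cases (compact or soft, against compact or soft), each reducing to a comparison of traces: a compact class $[p]$ lies below a soft class of dimension $t$ if and only if $\mathrm{tr}(p)<t$, while a soft class of dimension $s$ lies below a compact class $[q]$ if and only if $s\le\mathrm{tr}(q)$. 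These strict-versus-nonstrict distinctions mirror exactly the relations $[f_a]\le[f'_b]\Leftrightarrow a<b$ and $[f'_a]\le[f_b]\Leftrightarrow a\le b$ recorded in \autoref{exa:Q:M}.

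The main obstacle is the classification step, and within it the soft/compact dichotomy: one must prove that a class which is not a projection class is determined by the single number $d([a])$ and is genuinely soft. The delicate point that makes the bookkeeping consistent with \autoref{exa:Q:M} is that the supremum in $\Cu(M)$ of a strictly increasing sequence of projection classes with traces $t_n\uparrow t$ is \emph{not} the class of the strong-limit projection of trace $t$ (which is compact) but a strictly smaller soft class of dimension $t$---precisely as $\sup_n[f_{t_n}]=[f'_t]<[f_t]$ in \autoref{exa:Q:M}. The $\mathrm{II}_\infty$ case adds the observation that there is a single class of infinite-trace projections and that it is simultaneously compact and soft, accounting for the one extra element $[f_\infty]$ of $M_\infty$. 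Finally, since the trace is unique up to a positive scalar and rescaling induces an automorphism of $(\PPbar,\prec_\ast)$ and hence of $M_\ast$, the resulting isomorphism class is independent of the normalization.
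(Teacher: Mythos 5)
Your proposal is correct in substance, but it takes a genuinely different route from the paper. The paper's proof does not classify Cuntz classes by hand: it notes that every von Neumann algebra has real rank zero (Brown--Pedersen), computes the Murray--von Neumann semigroup $V(M)\cong[0,\infty)$ (respectively $V(N)\cong[0,\infty]$), and then invokes the general theorem of Antoine--Bosa--Perera that $\Cu(A)\cong\Lambda_\sigma(V(A))$, the semigroup of countably generated intervals, for any $\sigma$-unital \ca{} of real rank zero; the whole computation then reduces to listing the intervals $\{0\}$, $[0,a)$, $[0,a]$, $[0,\infty)$ and matching them with $M_1$ (resp.\ $M_\infty$). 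Your argument instead reconstructs the Cuntz semigroup of the factor directly from spectral projections and the trace, establishing the compact/soft dichotomy at the level of operators and verifying by hand that the resulting trace-indexed bijection is an additive order-isomorphism in both directions (whence automatically a $\CatCu$-isomorphism, as you correctly note). The paper's route is much shorter because the interval machinery already encodes your key observation---that the supremum of strictly increasing projection classes with traces $t_n\uparrow t$ is the \emph{soft} class of dimension $t$, strictly below the compact class of the limit projection---as the distinction $[0,t)\subsetneq[0,t]$; your route is self-contained and more instructive, at the price of a longer case analysis that you only sketch (the additivity and order checks, and the claim that two non-projection classes with equal $d$-value coincide).

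One intermediate assertion in your first paragraph is false as stated: $(a-\varepsilon)_+$ is \emph{not} invertible in the corner cut by $q_\varepsilon(a)=\chi_{[\varepsilon,\infty)}(a)$ when $\varepsilon\in\sigma(a)$ (the function $(t-\varepsilon)_+$ vanishes at $t=\varepsilon$), and correspondingly $[(a-\varepsilon)_+]=[q_\varepsilon(a)]$ fails in general: for $a$ with diffuse spectral measure, $[(a-\varepsilon)_+]$ is a non-projection (soft) class lying strictly below the compact class $[q_\varepsilon(a)]$. Fortunately, everything you actually use survives via the two-sided estimate
\[
[q_{\varepsilon'}(a)] \;\leq\; [(a-\varepsilon)_+] \;\leq\; [q_\varepsilon(a)]
\qquad (\varepsilon'>\varepsilon>0),
\]
where the left inequality holds because $q_{\varepsilon'}(a)\leq(\varepsilon'-\varepsilon)^{-1}(a-\varepsilon)_+$ places $q_{\varepsilon'}(a)$ in the hereditary subalgebra generated by $(a-\varepsilon)_+$. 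Interleaving gives $[a]=\sup_{\varepsilon>0}[(a-\varepsilon)_+]=\sup_{\varepsilon>0}[q_\varepsilon(a)]$, and your compactness criterion follows without the false claim: if $[a]$ is compact, then $[a]\ll[a]=\sup_n[q_{1/n}(a)]$ forces $[a]=[q_{1/n}(a)]$ for some $n$, while conversely projection classes are compact since $[p]=[(p-\tfrac12)_+]\ll[p]$. You should also make explicit the point you use silently throughout, namely that the Borel functions $\chi_{[\varepsilon,\infty)}(a)$ and their relatives lie in $M\otimes\KK$ and not merely in the von Neumann tensor product; this holds because $M\otimes\KK$ is a hereditary subalgebra of $M\,\bar{\otimes}\,B(H)$ and these elements are dominated by a multiple of $a$.
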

\begin{proof}
Let $M$ be a $\mathrm{II}_1$-factor $M$, let $\tau\colon M_+\to[0,1]$ denote its unique tracial state, and let $\tilde{\tau}\colon (M\otimes\KK)_+\to[0,\infty]$ denote the unique extension to a tracial weight on the stabilization. It is known that $V(M)$ is isomorphic to $[0,\infty)$, with the usual structure as a \pom, via $[p]\mapsto\tilde{\tau}(p)$.

Recall that a countably generated interval in a \pom{} is a nonempty, upward directed, order-hereditary subset that contains a countable cofinal subset.
By \cite[Theorem~6.4]{AntBosPer11CompletionsCu}, the Cuntz semigroup of a $\sigma$-unital \ca{} $A$ with real rank zero can be computed as $\Cu(A)\cong\Lambda_\sigma(V(A))$, the set of countably-generated intervals in $V(A)$. 

Now, the countably generated intervals in $[0,\infty)$ are given as:
$I_0:=\{0\}$; $I_a':=[0,a)$ and $I_a:=[0,a]$, for $a\in(0,\infty)$; and $I_\infty':=[0,\infty)$.
We obtain an order-isomorphism $\Lambda_\sigma([0,\infty))\cong M_1$ by mapping $I_a$ to $[f_a]$, for $a\in[0,\infty)$, and by mapping $I_a'$ to $[f_a']$, for $a\in(0,\infty]$.
Together, we obtain order-isomorphisms:
\[
\Cu(M) \cong \Lambda_\sigma(V(M)) \cong \Lambda_\sigma([0,\infty)) \cong M_1.
\]

For a $\mathrm{II}_\infty$-factor $N$, the argument runs analogous to the $\mathrm{II}_1$-case, with the difference that $N$ contains infinite projections. We thus have $V(N)\cong[0,\infty]$, and therefore $\Cu(N) \cong \Lambda_\sigma(V(N)) \cong \Lambda_\sigma([0,\infty]) \cong M_\infty$.
\end{proof}

\begin{dfn}
\label{dfn:Q:auxGenMor}
Let $S$ and $T$ be $\CatQ$-semigroups.
We define a binary relation $\prec$ on the set of generalized $\CatQ$-morphisms $\CatQMor[S,T]$ by setting $\varphi\prec\psi$ if and only $\varphi\leq \psi$ and $\varphi(a')\prec \psi(a)$ for all $a',a\in S$ with $a'\prec a$.
\end{dfn}

\begin{prp}
\label{prp:Q:genMorInQ}
Let $S$ and $T$ be $\CatQ$-semigroups.
Then the relation $\prec$ on $\CatQMor[S,T]$, as defined in \autoref{dfn:Q:auxGenMor}, is an auxiliary relation.
Moreover, $(\CatQMor[S,T],\prec)$ is a $\CatQ$-semigroup.
\end{prp}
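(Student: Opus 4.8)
The plan is to check, one by one, the two conditions (1) and (2) of \autoref{dfn:prelim:auxRel} that make $\prec$ an auxiliary relation on $\CatQMor[S,T]$, then the two additivity conditions (3) and (4), and finally to combine these with the facts recorded in \autoref{rmk:Q:CatQ}(2) --- that $\CatQMor[S,T]$ with pointwise addition and order is a \pom{} satisfying \axiomO{1} and \axiomO{4} --- to read off from \autoref{dfn:Q:CatQ} that $(\CatQMor[S,T],\prec)$ is a $\CatQ$-semigroup. The unifying idea is that each property of the relation $\prec$ on the hom-set should be deduced, pair by pair on $a'\prec a$ in $S$, from the corresponding property of the auxiliary relation $\prec$ on the target $T$.

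Three of the four conditions follow formally in this way and I expect them to be routine. For condition (2), given $\varphi'\le\varphi\prec\psi\le\psi'$ and a pair $a'\prec a$ in $S$, I would form the chain $\varphi'(a')\le\varphi(a')\prec\psi(a)\le\psi'(a)$ --- the outer inequalities being the pointwise order, the middle one the definition of $\varphi\prec\psi$ --- and then apply condition (2) of $\prec$ in $T$ to get $\varphi'(a')\prec\psi'(a)$; since $a'\prec a$ was arbitrary this yields $\varphi'\prec\psi'$. For condition (3), for any $\psi$ and any $a'\prec a$ one has $\mathbf 0(a')=0\prec\psi(a)$ by condition (3) of $\prec$ in $T$, so $\mathbf 0\prec\psi$. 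For condition (4), given $\varphi_1\prec\psi_1$, $\varphi_2\prec\psi_2$ and $a'\prec a$, applying the two hypotheses gives $\varphi_i(a')\prec\psi_i(a)$, and additivity of $\prec$ in $T$ (condition (4)) then produces $(\varphi_1+\varphi_2)(a')=\varphi_1(a')+\varphi_2(a')\prec\psi_1(a)+\psi_2(a)=(\psi_1+\psi_2)(a)$, i.e.\ $\varphi_1+\varphi_2\prec\psi_1+\psi_2$.

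The step I expect to be the main obstacle is condition (1), namely that $\varphi\prec\psi$ forces $\varphi\le\psi$ pointwise. From $\varphi\prec\psi$ together with condition (1) of $\prec$ in $T$ one gets $\varphi(a')\le\psi(a)$ for every $a'\prec a$, but this only bounds $\varphi$ on the approximants of $a$, whereas $\varphi\le\psi$ demands $\varphi(a)\le\psi(a)$ at $a$ itself. The plan is to bridge this gap by recovering $\varphi(a)$ as a supremum of such approximants: choosing an increasing sequence $(a_n)_n$ with $a_n\prec a$ and $a=\sup_n a_n$, the sup-preservation built into generalized $\CatQ$-morphisms gives $\varphi(a)=\sup_n\varphi(a_n)$, and since $\psi(a)$ dominates each $\varphi(a_n)$ it dominates the supremum, so $\varphi(a)\le\psi(a)$. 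This is the crux precisely because it is the only place where one cannot stay at the level of a single pair $a'\prec a$: one must approximate $a$ from below through the relation $\prec$ and then use \axiomO{1} and sup-preservation. I would flag here the need for such an approximation property of $S$ (automatic, for instance, when $S$ arises from a $\CatCu$-semigroup via $\iota$, where $\prec$ is $\ll$ and \axiomO{2} supplies the sequence $(a_n)_n$).

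With $\prec$ shown to be an additive auxiliary relation on $\CatQMor[S,T]$, the final step is purely bookkeeping: \autoref{rmk:Q:CatQ}(2) already provides the \pom{} structure and the axioms \axiomO{1} and \axiomO{4}, so \autoref{dfn:Q:CatQ} is satisfied and $(\CatQMor[S,T],\prec)$ is a $\CatQ$-semigroup, completing the proof.
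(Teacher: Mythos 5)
Your pointwise verifications largely coincide with the paper's own proof: the paper establishes \axiomO{1} and \axiomO{4} by the same pointwise-supremum observation you import from \autoref{rmk:Q:CatQ}, writes out only the additivity condition (4) via exactly your chain $(\varphi_1+\varphi_2)(a')=\varphi_1(a')+\varphi_2(a')\prec\psi_1(a)+\psi_2(a)=(\psi_1+\psi_2)(a)$, and dispatches the remaining conditions with the single phrase ``note that $\prec$ is an auxiliary relation on $\CatQMor[S,T]$''. Your explicit checks of conditions (2) and (3) are correct and fill in what the paper leaves implicit.

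The step you flagged, condition (1), is where your analysis is sharper than the paper's, which offers no argument for it at all. Your suspicion that an \axiomO{2}-type approximation property of $S$ is needed is correct, and it is not removable: for a general $\CatQ$-semigroup the implication $\varphi\prec\psi\Rightarrow\varphi\leq\psi$ can fail. Indeed, equip $S=\NNbar$ with the relation given by $a'\prec a$ if and only if $a'=0$; this is an additive auxiliary relation, and \axiomO{1} and \axiomO{4} hold, so $(S,\prec)$ is a legitimate $\CatQ$-semigroup. Since every generalized $\CatQ$-morphism preserves $0$, and since $0\prec\psi(a)$ holds in any target $T$ by condition (3) of \autoref{dfn:prelim:auxRel}, \emph{every} pair satisfies $\varphi\prec\psi$; taking $T=(\NNbar,\ll)$, $\varphi=\id$ and $\psi=0$ then violates condition (1). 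Your proposed repair --- writing $a=\sup_n a_n$ with $a_n\prec a$ and combining \axiomO{1} with sup-preservation of $\varphi$ --- is exactly what is needed, and it is available in every instance where the paper actually applies this proposition: in \autoref{dfn:bivarCu:auxGenMor} the source is a \CuSgp{} with $\prec$ equal to $\ll$, where \axiomO{2} supplies the required sequence. So your proof is complete under the hypothesis you flagged, and that hypothesis (or some substitute) is genuinely required for the first assertion of the proposition as stated; the paper's terse ``note that'' glosses over precisely this point.
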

\begin{proof}
Since addition and order in $\CatQMor[S,T]$ are defined pointwise, it is easy to verify that $\CatQMor[S,T]$ is a \pom{}.
Given an increasing sequence $(\varphi_n)_n$ in $\CatQMor[S,T]$, let $\varphi\colon S\to T$ be the pointwise supremum, that is, $\varphi(s):=\sup_n\varphi_n(s)$, for $s\in S$.
Then clearly $\varphi$ is a generalized $\CatQ$-morphism and $\sup_n\varphi_n=\varphi$ in $\CatQMor[S,T]$.
Thus, $\CatQMor[S,T]$ satisfies \axiomO{1}.
It is also clear that taking suprema is compatible with addition and hence $\CatQMor[S,T]$ also satisfies \axiomO{4}.

Next, note that $\prec$ is an auxiliary relation on $\CatQMor[S,T]$. It is also easy to verify that $\prec$ is additive.
Therefore, $(\CatQMor[S,T],\prec)$ is a $\CatQ$-semigroup.
\end{proof}

Next, we define bimorphisms in the category $\CatQ$ analogous to the definition of $\CatCu$-bimorphisms;
see \autoref{dfn:prelim:CatCuBimor}.
Recall the definition of $\CatPom$-bimorphisms from \autoref{pgr:prelim:CatPom}.

\begin{dfn}
\label{dfn:Q:CatQBimor}
Let $S,T$ and $P$ be $\CatQ$-semigroups, and let $\varphi\colon S\times T\to P$ be a $\CatPom$-bimorphism.
We say that $\varphi$ is a \emph{$\CatQ$-bimorphism} if it satisfies the following conditions:
\begin{enumerate}
\item
We have that $\sup_k\varphi(a_k,b_k)=\varphi(\sup_k a_k, \sup_k b_k)$, for every increasing sequences $(a_k)_k$ in $S$ and $(b_k)_k$ in $T$.
\item
If $a',a\in S$ and $b',b\in T$ satisfy $a'\prec a$ and $b'\prec b$, then $\varphi(a',b')\prec\varphi(a,b)$.
\end{enumerate}
We denote the set of $\CatQ$-bimorphisms by $\CatQBimor(S\times T,P)$.
\end{dfn}

Given $\CatQ$-semigroups $S,T$ and $P$, we equip $\CatQBimor(S\times T,P)$ with pointwise order and addition, giving it the structure of a \pom.
Similarly, we consider the set of $\CatQ$-morphisms between two $\CatQ$-semigroups as a \pom{} with the pointwise order and addition.

The proof of the following result follows straightforward from the definition of $\CatQ$-bimorphisms and is therefore omitted.

\begin{lma}
\label{prp:Q:sliceOfBimor}
Let $S,T$ and $P$ be $\CatQ$-semigroups, and let $\varphi\colon S\times T\to P$ be a $\CatQ$-bimorphism.
For each $a\in S$, define $\varphi_a\colon T\to P$ by $\varphi_a(b)=\varphi(a,b)$.
Then $\varphi_s$ belongs to $\CatQMor[T,P]$.
Moreover, if $a',a\in S$ satisfy $a'\prec a$, then $\varphi_{a'}\prec\varphi_a$.
\end{lma}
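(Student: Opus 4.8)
The plan is to verify the two assertions separately, each time reducing to the defining properties of a $\CatQ$-bimorphism by fixing the first variable or feeding in a constant sequence.

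First I would check that $\varphi_a$ lies in $\CatQMor[T,P]$. Since $\varphi$ is in particular a $\CatPom$-bimorphism, the slice $b\mapsto\varphi(a,b)$ is a $\CatPom$-morphism, so $\varphi_a$ preserves addition, the order, and the zero element for free. The only remaining requirement for a generalized $\CatQ$-morphism is preservation of suprema of increasing sequences. Given an increasing sequence $(b_k)_k$ in $T$, I would apply condition~(1) of \autoref{dfn:Q:CatQBimor} to the constant sequence $(a)_k$ and to $(b_k)_k$; since $\sup_k a=a$, this gives
\[
\varphi_a\big(\sup_k b_k\big)=\varphi\big(a,\sup_k b_k\big)=\sup_k\varphi(a,b_k)=\sup_k\varphi_a(b_k),
\]
as desired. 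Thus $\varphi_a\in\CatQMor[T,P]$.

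For the second assertion, assume $a'\prec a$ in $S$. Unwinding the definition of the auxiliary relation on $\CatQMor[T,P]$ from \autoref{dfn:Q:auxGenMor}, the claim $\varphi_{a'}\prec\varphi_a$ means precisely that $\varphi_{a'}(b')\prec\varphi_a(b)$ whenever $b'\prec b$ in $T$. For such $b',b$ I would simply invoke condition~(2) of \autoref{dfn:Q:CatQBimor}: from $a'\prec a$ and $b'\prec b$ it yields $\varphi(a',b')\prec\varphi(a,b)$, which is exactly $\varphi_{a'}(b')\prec\varphi_a(b)$.

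Both steps are immediate once the definitions are unpacked, so I do not anticipate a genuine obstacle; the only point requiring mild care is the observation that condition~(1) may be applied to a constant sequence in the first variable, which is what converts preservation of joint suprema into preservation of suprema in the single variable $T$.
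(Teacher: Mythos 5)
Your proof is correct, and it is exactly the routine verification the paper has in mind: the authors omit the proof, stating it ``follows straightforward from the definition of $\CatQ$-bimorphisms,'' and your write-up supplies precisely those details. In particular, your two key observations---that condition~(1) of the bimorphism definition applied to a constant sequence in the first variable yields preservation of suprema in the second variable, and that condition~(2) unwinds directly to the auxiliary relation of \autoref{dfn:Q:auxGenMor}---are the intended argument.
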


\begin{ntn}
\label{ntn:Q:sliceOfBimor}
Let $S,T$ and $P$ be $\CatQ$-semigroups, and let $\varphi\colon S\times T\to P$ be a $\CatQ$-bimorphism.
Using \autoref{prp:Q:sliceOfBimor} we may define a map $\tilde{\varphi}\colon S\to\CatQMor[T,P]$ by $\tilde{\varphi}(a)=\varphi_a$, for $a\in S$, which belongs to $\CatQMor(S,\CatQMor[T,P])$.
\end{ntn}

\begin{thm}
\label{prp:Q:ihom}
Let $S,T$ and $P$ be $\CatQ$-semigroups.
Then:
\begin{enumerate}
\item
For every $\CatQ$-morphism $\alpha\colon S\to\CatQMor[T,P]$ there exists a $\CatQ$-bimorphism $\varphi\colon S\times T\to P$ such that $\alpha=\tilde{\varphi}$.
\item
If $\varphi,\psi\colon S\times T\to P$ are $\CatQ$-bimorphisms, then $\varphi\leq\psi$ if and only if $\tilde{\varphi}\leq\tilde{\psi}$.
\end{enumerate}

Thus, the assignment $\Phi$ that sends a $\CatQ$-bimorphism $\varphi\colon S\times T\to P$ to the $\CatQ$-morphism $\tilde{\varphi}\colon S\to\CatQMor[T,P]$ defines a natural bijection
\[
\CatQBimor\big( S\times T,P \big) \cong \CatQMor\big( S,\CatQMor[T,P] \big),
\]
which respects the structure of the (bi)morphism sets as \pom{s}.
\end{thm}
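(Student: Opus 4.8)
The plan is to realize $\Phi$ as an order-isomorphism that preserves addition and the zero element, and then to record naturality. That $\Phi$ is well-defined into the stated codomain is exactly the content of \autoref{ntn:Q:sliceOfBimor} (relying on \autoref{prp:Q:sliceOfBimor}): for a $\CatQ$-bimorphism $\varphi$, the slice map $\tilde\varphi$ lies in $\CatQMor(S,\CatQMor[T,P])$. The two substantive items are statement~(1), which is surjectivity of $\Phi$, and statement~(2), which will give that $\Phi$ is an order-embedding; everything else is then formal.

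For~(1), given $\alpha\in\CatQMor(S,\CatQMor[T,P])$ the natural candidate is the evaluation $\varphi(a,b):=\alpha(a)(b)$, for which $\tilde\varphi=\alpha$ holds by construction; so the task reduces to checking that $\varphi$ is a $\CatQ$-bimorphism. That $\varphi$ is a $\CatPom$-bimorphism is immediate: in the second variable $\varphi(a,\freeVar)=\alpha(a)$ is a generalized $\CatQ$-morphism, and in the first variable additivity, monotonicity and $\varphi(0,b)=0$ follow since $\alpha$ preserves addition, order and the zero element, and these are computed pointwise in $\CatQMor[T,P]$.

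The two conditions of \autoref{dfn:Q:CatQBimor} are where the argument has content. For condition~(2), let $a'\prec a$ in $S$ and $b'\prec b$ in $T$; since $\alpha$ preserves the auxiliary relation we have $\alpha(a')\prec\alpha(a)$ in $\CatQMor[T,P]$, and unwinding the definition of $\prec$ from \autoref{dfn:Q:auxGenMor} at the pair $b'\prec b$ yields precisely $\varphi(a',b')=\alpha(a')(b')\prec\alpha(a)(b)=\varphi(a,b)$. This is the step for which the auxiliary relation on the morphism semigroup was designed. For condition~(1), take increasing sequences $(a_k)_k$ in $S$ and $(b_j)_j$ in $T$. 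Because $\alpha$ preserves suprema and these are pointwise in $\CatQMor[T,P]$ (see the proof of \autoref{prp:Q:genMorInQ}), and because each $\alpha(a_k)$ preserves suprema of increasing sequences, one gets $\varphi(\sup_k a_k,\sup_j b_j)=\sup_k\sup_j\alpha(a_k)(b_j)$. Since $\alpha(a_k)(b_j)$ is increasing in each index separately, the standard reduction of a doubly increasing double supremum to its diagonal gives $\sup_k\sup_j\alpha(a_k)(b_j)=\sup_k\alpha(a_k)(b_k)=\sup_k\varphi(a_k,b_k)$, as required. I expect this diagonal reduction to be the only point needing genuine care.

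Statement~(2) is immediate: both $\varphi\leq\psi$ and $\tilde\varphi\leq\tilde\psi$ unwind, via the pointwise orders on $P$ and on $\CatQMor[T,P]$, to the single condition $\varphi(a,b)\leq\psi(a,b)$ for all $a\in S$ and $b\in T$. Hence $\Phi$ is an order-embedding, in particular injective, and together with the surjectivity from~(1) it is an order-isomorphism. Preservation of addition and of the zero element follows from the pointwise identity $\widetilde{\varphi+\psi}(a)(b)=\varphi(a,b)+\psi(a,b)=(\tilde\varphi+\tilde\psi)(a)(b)$ together with the fact that the zero bimorphism corresponds to the zero morphism, so $\Phi$ is an isomorphism of positively ordered monoids. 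Finally, naturality in $S$, $T$ and $P$ is verified by unwinding definitions: since both $\Phi$ and its inverse are encoded by the single formula $\varphi(a,b)=\tilde\varphi(a)(b)$, the relevant squares for $\CatQ$-morphisms in each slot commute by compatibility of evaluation with pre- and post-composition.
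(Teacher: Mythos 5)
Your proposal is correct and follows the paper's own route exactly: the paper also defines $\varphi(a,b):=\alpha(a)(b)$ for surjectivity, observes that statement~(2) and the monoid compatibility unwind to pointwise conditions, and declares the verifications straightforward. Your write-up merely supplies the details the paper leaves implicit (in particular the diagonal reduction $\sup_k\sup_j\alpha(a_k)(b_j)=\sup_k\alpha(a_k)(b_k)$, which is valid since the double sequence is increasing in each index and the diagonal is cofinal), so there is nothing to correct.
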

\begin{proof}
To verify~(1), let  $\alpha\colon S\to\CatQMor[T,P]$ be a $\CatQ$-morphism.
Define $\varphi\colon S\times T\to P$ by $\varphi(s,t)=\alpha(s)(t)$.
It is straightforward to check that $\varphi$ is a $\CatQ$-bimorphism satisfying $\alpha=\tilde{\varphi}$, as desired.
Statement~(2) is also easily verified.
It follows that $\Phi$ is an order-isomorphism, and hence a bijection.
It is also clear that $\Phi$ is additive and preserves the zero element.
\end{proof}

\begin{lma}
\label{prp:Q:functorial}
Let $S_1$, $S_2$ and $T$ be $\CatQ$-semigroups, and let $\alpha\colon S_1\to S_2$ be a (generalized) $\CatQ$-morphism.
Then the map $\alpha^*\colon\CatQMor[S_2,T]\to\CatQMor[S_1,T]$ given by $\alpha^*(f) := f\circ\alpha$,
for $f\in\CatQMor[S_2,T]$, is a (generalized) $\CatQ$-morphism.

Analogously, given $\CatQ$-semigroups $S$, $T_1$ and $T_2$, and given a (generalized) $\CatQ$-morphism $\beta\colon T_1\to T_2$, the map $\beta_*\colon\CatQMor[S,T_1]\to\CatQMor[S,T_2]$ defined by $\beta_*(f) := \beta\circ f$, for $f\in\CatQMor[S,T_1]$, is a (generalized) $\CatQ$-morphism.
\end{lma}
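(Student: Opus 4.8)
The plan is to prove both halves by checking, one property at a time, that $\alpha^*$ and $\beta_*$ satisfy the defining conditions of a (generalized) $\CatQ$-morphism, with every verification reducing to a pointwise computation. Two facts make this routine. First, by (the proof of) \autoref{prp:Q:genMorInQ} the order on $\CatQMor[\freeVar,\freeVar]$ is the pointwise order and the supremum of an increasing sequence is computed pointwise. Second, the auxiliary relation on these morphism semigroups is the explicit relation of \autoref{dfn:Q:auxGenMor}.

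For $\alpha^*$, I would first record that it is well-defined: if $f\in\CatQMor[S_2,T]$ then $f\circ\alpha$ preserves addition, order, the zero element and suprema of increasing sequences, since both $\alpha$ and $f$ do and these properties are stable under composition, so $f\circ\alpha\in\CatQMor[S_1,T]$. Preservation of addition, order and the zero element by $\alpha^*$ then follow by evaluating at an arbitrary $a\in S_1$; for instance $((f+g)\circ\alpha)(a)=f(\alpha(a))+g(\alpha(a))$ gives $\alpha^*(f+g)=\alpha^*(f)+\alpha^*(g)$. For suprema, given an increasing sequence $(f_n)_n$ with pointwise supremum $f$, evaluating at $a$ yields $((\sup_n f_n)\circ\alpha)(a)=\sup_n f_n(\alpha(a))=\sup_n(f_n\circ\alpha)(a)$, so that $\alpha^*(\sup_n f_n)=\sup_n\alpha^*(f_n)$.

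The argument for $\beta_*$ is symmetric, now using that $\beta$ (rather than $\alpha$) preserves addition, order, the zero element and suprema; the only genuine change is that the supremum computation invokes preservation of suprema by $\beta$ applied to the increasing sequence $(f_n(a))_n$ in $T_1$, giving $(\beta\circ\sup_n f_n)(a)=\sup_n\beta(f_n(a))=\sup_n(\beta\circ f_n)(a)$.

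The single point where the non-generalized hypothesis is used is preservation of the auxiliary relation of \autoref{dfn:Q:auxGenMor}, and this is exactly what the parenthetical ``(generalized)'' records. For $\alpha^*$, suppose $\alpha$ is a genuine $\CatQ$-morphism and $f\prec g$; given $a'\prec a$ in $S_1$, preservation of $\prec$ by $\alpha$ gives $\alpha(a')\prec\alpha(a)$, and then $f\prec g$ gives $(f\circ\alpha)(a')=f(\alpha(a'))\prec g(\alpha(a))=(g\circ\alpha)(a)$, so $\alpha^*(f)\prec\alpha^*(g)$. Symmetrically, if $\beta$ is a $\CatQ$-morphism and $f\prec g$, then for $a'\prec a$ in $S$ we have $f(a')\prec g(a)$ and hence $\beta(f(a'))\prec\beta(g(a))$, i.e.\ $\beta_*(f)\prec\beta_*(g)$. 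I expect no real obstacle here: every step is a one-line pointwise calculation, and the only care needed is to track which of $\alpha$, $\beta$ must preserve $\prec$ in order to conclude that the corresponding map is a $\CatQ$-morphism rather than merely a generalized one.
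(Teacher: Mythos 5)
Your proof is correct and follows essentially the same route as the paper: the paper dismisses the generalized-morphism properties as straightforward pointwise checks (which you spell out) and then verifies preservation of the auxiliary relation via exactly your computation $\alpha^*(f_1)(a')=f_1(\alpha(a'))\prec f_2(\alpha(a))=\alpha^*(f_2)(a)$, with the $\beta_*$ case handled analogously. Nothing is missing.
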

\begin{proof}
It is straightforward to check that $\alpha^*$ and $\beta_*$ are generalized $\CatQ$-morphisms.
Assume that $\alpha$ is a $\CatQ$-morphism.
To show that $\alpha^*$ preserves the auxiliary relation, let $f_1,f_2\in\CatQMor[S_2,T]$ satisfy $f_1\prec f_2$.
To show that $\alpha^*(f_1)\prec\alpha^*(f_2)$, let $a',a\in S$ satisfy $a'\prec a$.
Since $\alpha$ preserves the auxiliary relation, we have $\alpha(a')\prec\alpha(a)$.
Using that $f_1\prec f_2$ at the second step, we deduce that
\[
\alpha^*(f_1)(a')
= f_1(\alpha(a'))
\prec f_2(\alpha(a))
= \alpha^*(f_2)(a),
\]
as desired.
Analogously, one shows that $\beta_*$ preserves the auxiliary relation whenever $\beta$ does.
\end{proof}

\begin{pgr}
\label{pgr:Q:ihom_functor}
Let $T$ be a $\CatQ$-semigroup.
We let $\CatQMor[\freeVar,T]\colon\CatQ\to\CatQ$ be the contravariant functor that sends a $\CatQ$-semigroup $S$ to the $\CatQ$-semigroup $\CatQMor[S,T]$ (see \autoref{prp:Q:genMorInQ}), and that sends a $\CatQ$-morphism $\alpha\colon S_1\to S_2$ to the $\CatQ$-morphism $\alpha^*\colon\CatQMor[S_2,T]\to\CatQMor[S_1,T]$ as in \autoref{prp:Q:functorial}.

Analogously, we obtain a covariant functor $\CatQMor[S,\freeVar]\colon\CatQ\to\CatQ$ for every $\CatQ$-sem\-i\-group $S$.
Thus, we obtain a bifunctor
\[
\CatQMor[\freeVar,\freeVar]\colon\CatQ\times\CatQ\to\CatQ.
\]
\end{pgr}

\section{Abstract bivariant Cuntz semigroups}
\label{sec:bivarCu}
In this section, we use the $\tau$-construction developed in Sections~\ref{sec:path} and~\ref{sec:Q} to prove that $\CatCu$ is a \emph{closed} symmetric monoidal category.

\subsection{Construction of abstract bivariant Cuntz semigroups}
\label{sec:bivarCu:construction}

Recall the notion of a \emph{generalized \CuMor{}} (see \autoref{dfn:prelim:CatCu}), and that the set of generalized \CuMor{s} $S\to T$ is denoted by $\CatCuMor[S,T]$.
We equip this set with pointwise order and addition, giving it a natural structure as a \pom.

From \autoref{pgr:Q:inclusionCuQ}, there is functor $\iota\colon\CatCu\to\CatQ$ that embeds $\CatCu$ as a full subcategory of $\CatQ$. This is given by
considering a \CuSgp{} $S$ as a $\CatQ$-semigroup for the auxiliary relation $\ll$.

In \autoref{dfn:Q:auxGenMor} we introduced an auxiliary relation on the set of generalized $\CatQ$-morphisms, giving itself the structure of a $\CatQ$-semigroup;
see \autoref{prp:Q:genMorInQ}.
Let us transfer this definition to the setting of $\CatCu$-semigroups.

\begin{dfn}
\label{dfn:bivarCu:auxGenMor}
Let $S$ and $T$ be \CuSgp{s}.
We define a binary relation $\prec$ on the set of generalized \CuMor{s} $\CatCuMor[S,T]$ by setting $\varphi\prec \psi$ if and only $\varphi(a')\ll\psi(a)$ for all $a',a\in S$ with $a'\ll a$.
\end{dfn}

\begin{rmks}
(1)
The auxiliary relation $\prec$ on the set of generalized \CuMor{s} was already considered in \cite[6.2.6]{AntPerThi14arX:TensorProdCu}. It is easy to verify that, for $\varphi,\psi\in \CatCuMor[S,T]$, the relation $\varphi\prec \psi$ as defined in \autoref{dfn:bivarCu:auxGenMor} implies $\varphi\leq \psi$.

(2)
Every \CuMor{} is also a generalized \CuMor{}, and we therefore consider $\CatCuMor(S,T)$ as a subset of $\CatCuMor[S,T]$.
For $\varphi\in\CatCuMor[S,T]$, we have $\varphi\prec\varphi$ if and only if $\varphi$ is a $\CatCu$-morphism.
\end{rmks}

It follows from \autoref{prp:Q:genMorInQ} that $\prec$ is an auxiliary relation on $\CatCuMor[S,T]$ and that $(\CatCuMor[S,T],\prec)$ is a $\CatQ$-semigroup.
We may therefore apply the $\tau$-construction.

\begin{dfn}
\label{dfn:bivarCu:ihom}
Let $S$ and $T$ be \CuSgp{s}.
We define the \emph{internal hom} from $S$ to $T$ as the \CuSgp{}
\[
\ihom{S,T} :=\tau\big( \CatCuMor[S,T],\prec \big).
\]
We call $\ihom{S,T}$ the \emph{bivariant \CuSgp{}}, or the \emph{abstract bivariant Cuntz semigroup} of $S$ and $T$.
\end{dfn}

\begin{rmk}
\label{rmk:bivarCu:ihom}
Recall that a path in $\CatCu[S,T]$ is a map $f\colon I_\QQ\to\CatCu[S,T]$ such that $f(\lambda')\prec f(\lambda)$ whenever $\lambda',\lambda\in I_\QQ$ satisfy $\lambda'<\lambda$.
We often denote $f(\lambda)$ by $f_\lambda$ and we denote the path by $\pathCu{f}=(f_\lambda)_\lambda$.
By definition then, the elements of $\ihom{S,T}$ are equivalence classes of paths in the $\CatQ$-semigroup $(\CatCu[S,T],\prec)$.
\end{rmk}

\begin{pgr}
\label{pgr:bivarCu:ihom_functor}
We now show that the internal-hom in $\CatCu$ is functorial in both variables:
contravariant in the first and covariant in the second variable.

Let $T$ be a \CuSgp{}.
Considering $T$ as a $\CatQ$-semigroup, we have a contravariant functor $\CatQMor[\freeVar,T]\colon\CatQ\to\CatQ$ as in \autoref{pgr:Q:ihom_functor}.
Precomposing with the inclusion $\iota\colon\CatCu\to\CatQ$ from \autoref{pgr:Q:inclusionCuQ} and postcomposing with the functor $\tau\colon\CatQ\to\CatCu$, we obtain a functor $\ihom{\freeVar,T}\colon\CatCu\to\CatCu$.

Given \CuSgp{s} $S_1$ and $S_2$, and a \CuMor{} $\alpha\colon S_1\to S_2$, we use $\alpha^*$ to denote the induced \CuMor{} $\ihom{S_2,T}\to\ihom{S_1,T}$.
Thus, if we consider $\alpha$ as a $\CatQ$-morphism and if we let $\alpha^*_{\CatQ}\colon\CatQMor[S_2,T]\to\CatQMor[S_1,T]$ denote the induced $\CatQ$-morphism from \autoref{prp:Q:functorial}, then $\alpha^*$ is given as $\alpha^* := \tau( \alpha^*_{\CatQ} )$.

Analogously, given a \CuSgp{} $S$, we define the functor $\ihom{S,\freeVar}\colon\CatCu\to\CatCu$ as the composition of the functors $\iota$, the functor $\CatQMor[S,\freeVar]$ from \autoref{pgr:Q:ihom_functor} and $\tau$.

Given \CuSgp{s} $T_1$ and $T_2$, and a \CuMor{} $\beta\colon T_1\to T_2$, we use $\beta_*$ to denote the induced \CuMor{} $\ihom{S,T_1}\to\ihom{S,T_2}$.
If we consider $\beta$ as a $\CatQ$-morphism and if we let $\beta_*^{\CatQ}\colon\CatQMor[S,T_1]\to\CatQMor[S,T_2]$ denote the induced $\CatQ$-morphism from \autoref{prp:Q:functorial}, then $\beta_*$ is given as $\beta_* := \tau( \beta_*^{\CatQ} )$.

Thus, the internal-hom in the category $\CatCu$ is a bifunctor
\[
\ihom{\freeVar,\freeVar}\colon\CatCu\times\CatCu\to\CatCu.
\]
\end{pgr}

Next, we transfer the concept of the endpoint map from \autoref{dfn:Q:endpointMap} to the setting of bivariant \CuSgp{s}.
To simplify notation, we write $\sigma_{S,T}$ for $\varphi_{\Cu[S,T]}$, the endpoint map associated to the $\CatQ$-semigroup $\CatCuMor[S,T]$.
The next definition makes this precise.

\begin{dfn}
\label{dfn:bivarCu:endpointMap}
Let $S$ and $T$ be \CuSgp{s}.
We let $\sigma_{S,T}\colon\ihom{S,T}\to\CatCuMor[S,T]$ be defined by
\[
\sigma_{S,T}([\pathCu{f}])(a)
= \sup_{\lambda\in I_\QQ} f_\lambda(a),
\]
for a path $\pathCu{f}=(f_\lambda)_\lambda$ in $\CatCu[S,T]$ and $a\in S$.
We refer to $\sigma_{S,T}$ as the \emph{endpoint map}.
\end{dfn}

\begin{lma}
\label{prp:bivarCu:inducedBimor}
Let $S$, $T$ and $P$ be \CuSgp{s}, and let $\alpha\colon S\to\ihom{T,P}$ be a \CuMor{}.
Let $\sigma_{T,P}\colon\ihom{T,P}\to\CatCu[T,P]$ be the endpoint map from \autoref{dfn:bivarCu:endpointMap}.
Define $\bar{\alpha}\colon S\times T\to P$ by
\[
\bar{\alpha}(a,b) =\sigma_{T,P}(\alpha(a))(b),
\]
for $a\in S$ and $b\in T$.
Then $\bar{\alpha}$ is a $\CatCu$-bimorphism.
\end{lma}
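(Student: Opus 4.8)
The plan is to recognize $\bar\alpha$ as the $\CatQ$-bimorphism associated, via the bijection of \autoref{prp:Q:ihom}, to a suitable $\CatQ$-morphism into $\CatCu[T,P]$, and then to observe that $\CatQ$-bimorphisms between $\CatCu$-semigroups are exactly $\CatCu$-bimorphisms.

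First I would view $S$, $T$, $P$ and $\ihom{T,P}$ as $\CatQ$-semigroups via the inclusion functor $\iota$ of \autoref{pgr:Q:inclusionCuQ}, so that each carries its way-below relation $\ll$ as auxiliary relation; under this identification $\CatCu[T,P]$ equipped with the relation $\prec$ of \autoref{dfn:bivarCu:auxGenMor} is precisely the $\CatQ$-semigroup $\CatQMor[T,P]$. Now $\alpha\colon S\to\ihom{T,P}$ is a \CuMor{}, hence a $\CatQ$-morphism, and $\sigma_{T,P}=\varphi_{\CatCu[T,P]}$ is a $\CatQ$-morphism by \autoref{prp:Q:endpointnatural}. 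Consequently the composite
\[
\gamma := \sigma_{T,P}\circ\alpha\colon S\to\CatCu[T,P]
\]
is a $\CatQ$-morphism, and by construction $\gamma(a)(b)=\sigma_{T,P}(\alpha(a))(b)=\bar\alpha(a,b)$.

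Next I would apply part~(1) of \autoref{prp:Q:ihom} (equivalently, \autoref{ntn:Q:sliceOfBimor}) to the $\CatQ$-morphism $\gamma$: the associated map $(a,b)\mapsto\gamma(a)(b)$, which is exactly $\bar\alpha$, is a $\CatQ$-bimorphism $S\times T\to P$. Finally, since all three semigroups carry the relation $\ll$, condition~(2) of \autoref{dfn:Q:CatQBimor} reads ``$a'\ll a$ and $b'\ll b$ imply $\bar\alpha(a',b')\ll\bar\alpha(a,b)$'', and condition~(1) is unchanged; thus the defining conditions of a $\CatQ$-bimorphism coincide verbatim with those of a $\CatCu$-bimorphism in \autoref{dfn:prelim:CatCuBimor}. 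Hence $\bar\alpha$ is a $\CatCu$-bimorphism.

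I expect the only delicate point to be bookkeeping with the auxiliary relations: one must keep track that on $\ihom{T,P}=\tau(\CatCu[T,P])$ the relevant relation is $\ll$ while on $\CatCu[T,P]$ it is the morphism relation $\prec$, and that $\sigma_{T,P}$ intertwines them, which is exactly \autoref{prp:Q:endpoint}~(3). The substantive content of the lemma, namely that $\bar\alpha$ preserves compact containment jointly (condition~(2)), is precisely what is packaged into the statement that $\gamma$ preserves the auxiliary relation, so no separate way-below computation is needed. Should one prefer a hands-on argument, condition~(2) can be checked directly: given $a'\ll a$ one has $\alpha(a')\ll\alpha(a)$, whence $\sigma_{T,P}(\alpha(a'))\prec\sigma_{T,P}(\alpha(a))$ by \autoref{prp:Q:endpoint}~(3), and then $b'\ll b$ yields $\bar\alpha(a',b')\ll\bar\alpha(a,b)$ by the definition of $\prec$ in \autoref{dfn:bivarCu:auxGenMor}; separate continuity in each variable, together with the standard double-supremum argument along the diagonal of an increasing sequence, then gives the $\CatPom$-bimorphism property and condition~(1).
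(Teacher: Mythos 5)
Your proof is correct, but its primary route differs from the paper's. The paper proves \autoref{prp:bivarCu:inducedBimor} by direct verification: it checks that $\bar{\alpha}(\freeVar,b)$ is additive, order-preserving and preserves suprema of increasing sequences (using that $\alpha$ and $\sigma_{T,P}$ both do, and that suprema in $\CatCuMor[T,P]$ are pointwise), notes that $\bar{\alpha}(a,\freeVar)=\sigma_{T,P}(\alpha(a))$ is itself an element of $\CatCuMor[T,P]$, and then verifies the joint way-below condition by exactly the three-step computation you sketch at the end: $\alpha(a')\ll\alpha(a)$ since $\alpha$ is a \CuMor{}, hence $\sigma_{T,P}(\alpha(a'))\prec\sigma_{T,P}(\alpha(a))$ since the endpoint map is a $\CatQ$-morphism, hence $\bar{\alpha}(a',b')\ll\bar{\alpha}(a,b)$ by the definition of $\prec$ in \autoref{dfn:bivarCu:auxGenMor}. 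Your main argument instead derives the lemma formally from the $\CatQ$-machinery: $\gamma=\sigma_{T,P}\circ\alpha$ is a $\CatQ$-morphism into $\CatQMor[T,P]=(\CatCuMor[T,P],\prec)$, part~(1) of \autoref{prp:Q:ihom} exhibits $\bar{\alpha}$ as the (necessarily unique, since $\tilde{\varphi}$ determines $\varphi$ pointwise) $\CatQ$-bimorphism with $\tilde{\varphi}=\gamma$, and \autoref{prp:bivarCu:fullSubcatBimor} upgrades this to a $\CatCu$-bimorphism. This is legitimate and free of circularity, since \autoref{prp:Q:ihom} is established earlier and \autoref{prp:bivarCu:fullSubcatBimor} is independent of the present lemma; indeed it is precisely the chain the paper itself uses immediately afterwards to prove \autoref{prp:bivarCu:bij_morToIhom_bimor}, so your route makes the lemma an essentially formal corollary of the adjunction framework. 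What it buys is brevity and conceptual clarity; what it costs is that the element-level content is pushed into the ``straightforward to check'' step inside the proof of \autoref{prp:Q:ihom}, whereas the paper's direct proof keeps explicit where each hypothesis is used --- in particular, that $\alpha$ is a genuine (not merely generalized) \CuMor{} is needed exactly once, to get $\alpha(a')\ll\alpha(a)$. One small slip: \autoref{ntn:Q:sliceOfBimor} goes in the opposite direction (from a bimorphism to a morphism), so it is not an ``equivalent'' citation for the step you need, but your appeal to \autoref{prp:Q:ihom}~(1) carries the argument on its own.
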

\begin{proof}
We write $\sigma$ for $\sigma_{T,P}$.
To show that $\bar{\alpha}$ is a generalized \CuMor{} in the first variable, let $b\in T$.
Since $\alpha$ and $\sigma$ are both additive and order preserving, we conclude that $\bar{\alpha}(\freeVar,b)=\sigma(\alpha(\freeVar))(b)$ is additive and order preserving as well.
To show that $\bar{\alpha}(\freeVar,b)$ preserves suprema of increasing sequences, let $(a_n)_n$ be an increasing sequence in $S$.
Set $a:=\sup_n a_n$.
Since both $\alpha$ and $\sigma$ preserve suprema of increasing sequences, we obtain that
\[
\sigma(\alpha(a)) =\sup_n \sigma(\alpha(a_n)),
\]
in $\CatCu[T,P]$.
Since the supremum of an increasing sequence in $\CatCu[T,P]$ is the pointwise supremum, we get that $\bar{\alpha}(a,b)=\sup_n\bar{\alpha}(a_n,b)$, as desired.

For each $a\in S$, we have $\bar{\alpha}(a,\freeVar)=\sigma(\alpha(a))$, which is an element in $\CatCu[T,P]$.
Therefore, $\bar{\alpha}$ is a generalized \CuMor{} in the second variable.

Lastly, to show that $\bar{\alpha}$ preserves the joint way-below relation, let $a',a\in S$ and $b',b\in T$ satisfy $a'\ll a$ and $b'\ll b$.
Since $\alpha$ is a \CuMor{} we have $\alpha(a')\ll\alpha(a)$.
Using that $\sigma$ is a $\CatQ$-morphism, it follows that $\sigma(\alpha(a'))\prec \sigma(\alpha(a))$.
Therefore, applying the definition of the auxiliary relation $\prec$ at the second step, we obtain that
\[
\bar{\alpha}(a',b')
= \sigma(\alpha(a'))(b')
\ll \sigma(\alpha(a))(b)
= \bar{\alpha}(a,b),
\]
as desired.
\end{proof}

We omit the straightforward proof of the following result.

\begin{lma}
\label{prp:bivarCu:fullSubcatBimor}
Let $S,T$ and $P$ be \CuSgp{s}, and let $\varphi\colon S\times T\to P$ be a map.
Then $\varphi$ is a $\CatCu$-bimorphism if and only if $\varphi$, considered as a map between $\CatQ$-semigroups, is a $\CatQ$-bimorphism.
Thus, we have a natural bijection
\[
\CatQBimor(S\times T,P)
\cong \CatCuBimor(S\times T,P),
\]
which, moreover, respects the structure of the bimorphism sets as \pom{s}.
\end{lma}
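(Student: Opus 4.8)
The plan is to show that, once the three \CuSgp{s} $S$, $T$ and $P$ are regarded as $\CatQ$-semigroups via the inclusion functor $\iota$ from \autoref{pgr:Q:inclusionCuQ}, the defining conditions for a $\CatCu$-bimorphism and for a $\CatQ$-bimorphism become literally the same. The asserted bijection will then simply be the identity on the common underlying set of maps $\varphi\colon S\times T\to P$, so that the only genuine work is an unwinding of definitions.

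First I would recall that $\iota$ equips each of $S$, $T$ and $P$ with the (sequential) way-below relation $\ll$ as its auxiliary relation; this is exactly how $\iota$ is defined in \autoref{pgr:Q:inclusionCuQ}. Consequently, the auxiliary relation $\prec$ appearing in \autoref{dfn:Q:CatQBimor} coincides with $\ll$ on each of the three semigroups. This bookkeeping step is the one point that needs care, since the entire equivalence hinges on the identification $\prec=\ll$.

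Next I would compare the two definitions term by term. Both \autoref{dfn:prelim:CatCuBimor} and \autoref{dfn:Q:CatQBimor} require $\varphi$ to be a $\CatPom$-bimorphism, so this hypothesis is shared. Condition~(1), on preservation of suprema of increasing sequences in each variable, is word-for-word identical in the two definitions. Condition~(2) of \autoref{dfn:Q:CatQBimor} reads: if $a'\prec a$ and $b'\prec b$ then $\varphi(a',b')\prec\varphi(a,b)$; substituting $\prec=\ll$ as identified above turns this into the statement that $a'\ll a$ and $b'\ll b$ imply $\varphi(a',b')\ll\varphi(a,b)$, which is precisely condition~(2) of \autoref{dfn:prelim:CatCuBimor}. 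Hence a map $\varphi$ is a $\CatCu$-bimorphism if and only if it is a $\CatQ$-bimorphism.

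Finally, since the two classes of maps coincide as subsets of the set of all maps $S\times T\to P$, the bijection $\CatQBimor(S\times T,P)\cong\CatCuBimor(S\times T,P)$ is the identity. Both sides carry the pointwise order and addition by definition, so the identity automatically preserves addition, the zero element and the order, i.e.\ it is an isomorphism of \pom{s}. I do not expect any real obstacle here beyond the first step, after which everything is a direct comparison of definitions, consistent with the paper's remark that the proof is straightforward.
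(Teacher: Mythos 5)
Your proof is correct and is exactly the straightforward unwinding of definitions that the paper intends; indeed the paper omits the proof as ``straightforward,'' and your argument---identifying the auxiliary relation with $\ll$ via the inclusion functor $\iota$, observing that conditions~(1) and~(2) of \autoref{dfn:prelim:CatCuBimor} and \autoref{dfn:Q:CatQBimor} then coincide verbatim, and noting the bijection is the identity and hence preserves the pointwise \pom{} structure---is the intended one. Nothing is missing.
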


\begin{lma}
\label{prp:bivarCu:bij_morToIhom_bimor}
Let $S, T$ and $P$ be \CuSgp{s}.
Then the assignment that sends a \CuMor{} $\alpha\colon S \to \ihom{T,P}$ to the $\CatCu$-bimorphism $\tilde{\alpha}\colon S\times T\to P$ given in \autoref{prp:bivarCu:inducedBimor} defines a natural bijection
\[
\CatCuMor\big( S, \ihom{T,P} \big)
\cong \CatCuBimor\big( S\times T, P \big),
\]
which respects the structure of the (bi)morphism sets as \pom{s}.
\end{lma}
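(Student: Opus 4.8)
The plan is to exhibit the desired bijection as a composite of three bijections that have already been established, and then to check that this composite coincides with the concrete assignment $\alpha\mapsto\tilde\alpha$ of \autoref{prp:bivarCu:inducedBimor}. By \autoref{dfn:bivarCu:ihom} we have $\ihom{T,P}=\tau(\CatCuMor[T,P],\prec)$, and I would first observe that, once $T$ and $P$ are regarded as $\CatQ$-semigroups via $\iota$ (see \autoref{pgr:Q:inclusionCuQ}), the $\CatQ$-semigroup $\CatCuMor[T,P]$ of \autoref{dfn:bivarCu:auxGenMor} coincides with $\CatQMor[T,P]$ of \autoref{dfn:Q:auxGenMor}: the underlying sets of generalized morphisms agree, and the two auxiliary relations agree precisely because the auxiliary relation on $T$ is $\ll$.

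With this identification in hand, the first bijection comes from the coreflection \autoref{thm:Q:coreflection}, applied to the $\CatCu$-semigroup $S$ and the $\CatQ$-semigroup $\CatCuMor[T,P]$, giving a \pom{} isomorphism
\[
\CatCuMor\big(S,\ihom{T,P}\big)\cong\CatQMor\big(S,\CatCuMor[T,P]\big)
\]
under which $\alpha$ is sent to $\sigma_{T,P}\circ\alpha$ (here $\sigma_{T,P}=\varphi_{\CatCuMor[T,P]}$ is the endpoint map of \autoref{dfn:bivarCu:endpointMap}). The second bijection is \autoref{prp:Q:ihom}, applied to the $\CatQ$-semigroups $S,T,P$, giving
\[
\CatQMor\big(S,\CatCuMor[T,P]\big)\cong\CatQBimor\big(S\times T,P\big),
\]
whose inverse sends a $\CatQ$-morphism $\alpha'$ to the $\CatQ$-bimorphism $(a,b)\mapsto\alpha'(a)(b)$. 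The third bijection is \autoref{prp:bivarCu:fullSubcatBimor}, which identifies $\CatQBimor(S\times T,P)$ with $\CatCuBimor(S\times T,P)$ and leaves the underlying map unchanged. Each of these three bijections respects the structure of the relevant sets as \pom{s} and is natural, so the composite is a natural \pom{} isomorphism.

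It then remains to check that the composite is precisely $\alpha\mapsto\tilde\alpha$. Tracing an element through, $\alpha$ is first sent to $\sigma_{T,P}\circ\alpha$, which is then sent to the bimorphism $(a,b)\mapsto(\sigma_{T,P}\circ\alpha)(a)(b)=\sigma_{T,P}(\alpha(a))(b)$, and this is exactly $\tilde\alpha(a,b)$ as defined in \autoref{prp:bivarCu:inducedBimor}; the final identification does not alter the map. I do not expect any genuine difficulty here: the one point requiring care—and the only real obstacle—is the bookkeeping that guarantees the three cited bijections are applicable as stated (in particular the identification $\CatCuMor[T,P]=\CatQMor[T,P]$ of $\CatQ$-semigroups, and the fact that $S$ enters the right-hand sides as a $\CatQ$-semigroup for the relation $\ll$) and that their composite reproduces the explicit formula of \autoref{prp:bivarCu:inducedBimor}.
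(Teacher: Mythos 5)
Your proof is correct and takes essentially the same route as the paper's: the paper's proof composes exactly the same three bijections (\autoref{thm:Q:coreflection}, \autoref{prp:Q:ihom}, and \autoref{prp:bivarCu:fullSubcatBimor}) in the same order and leaves the identification of the composite with $\alpha\mapsto\tilde{\alpha}$ as a straightforward check. Your explicit tracing of the composite, and your preliminary remark that $\CatCuMor[T,P]$ coincides with $\CatQMor[T,P]$ as $\CatQ$-semigroups once $T$ and $P$ are viewed via $\iota$, merely spell out details the paper treats as immediate from the definitions.
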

\begin{proof}
By definition, we have $\CatCuMor\big( S, \ihom{T,P} \big) = \CatCuMor\big( S, \tau(\CatQMor[T,P]) \big)$.
Further, we have natural bijections, respecting the structure as \pom{s}, using \autoref{thm:Q:coreflection} at the first step, using \autoref{prp:Q:ihom} at the second step, and using \autoref{prp:bivarCu:fullSubcatBimor} at the last step:
\[
\CatCuMor\big( S, \tau(\CatQMor[T,P]) \big)
\cong \CatQMor\big( S, \CatQMor[T,P] \big)
\cong \CatQBimor\big( S\times T, P \big)
\cong \CatCuBimor\big( S\times T, P \big).
\]
It is straightforward to check that the composition of these bijections identifies a \CuMor{} $\alpha$ with the $\CatCu$-bimorphism $\tilde{\alpha}$ as defined in \autoref{prp:bivarCu:inducedBimor}.
\end{proof}

\begin{thm}
\label{prp:bivarCu:ihomAdjoint}
Let $S, T$ and $P$ be \CuSgp{s}.
Then there are natural bijections
\[
\CatCuMor\big( S, \ihom{T,P} \big)
\cong \CatCuBimor\big( S\times T, P \big)
\cong \CatCuMor\big( S\otimes T, P \big),
\]
which respect the structure of the (bi)morphism sets as \pom{s}.

The first bijection is given by assigning to a \CuMor{} $\alpha\colon S\to\ihom{T,P}$ the $\CatCu$-bimorphism $\tilde{\alpha}\colon S\times T\to P$ as in \autoref{prp:bivarCu:inducedBimor}, that is, $\tilde{\alpha}(a,b)=\sigma_{T,P}(\alpha(a))(b)$, for $(a,b)\in S\times T$.
The second bijection is given by assigning to a \CuMor{} $\beta\colon S\otimes T\to P$ the $\CatCu$-bimorphism $S\times T\to P$, $(a,b)\mapsto\beta(a\otimes b)$, for $(a,b)\in S\times T$.
\end{thm}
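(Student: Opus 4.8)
The plan is to obtain the whole chain of bijections by concatenating two facts that are already available, so that essentially no new computation is required. For the first bijection
\[
\CatCuMor\big( S, \ihom{T,P} \big) \cong \CatCuBimor\big( S\times T, P \big),
\]
I would invoke \autoref{prp:bivarCu:bij_morToIhom_bimor} verbatim: it provides exactly this bijection, respecting the \pom{} structure, and realizes it by sending a \CuMor{} $\alpha\colon S\to\ihom{T,P}$ to the $\CatCu$-bimorphism $\tilde{\alpha}(a,b)=\sigma_{T,P}(\alpha(a))(b)$ of \autoref{prp:bivarCu:inducedBimor}. For the second bijection
\[
\CatCuBimor\big( S\times T, P \big) \cong \CatCuMor\big( S\otimes T, P \big),
\]
I would invoke the universal property of the tensor product in $\CatCu$, namely \autoref{prp:prelim:tensCu}, under which a \CuMor{} $\beta\colon S\otimes T\to P$ corresponds to the bimorphism $(a,b)\mapsto\beta(a\otimes b)$, again respecting the \pom{} structure. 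Composing the two yields the displayed bijections together with the stated explicit descriptions, and the composite respects the \pom{} structure because each factor does.

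The only content beyond citing these two results is naturality of the composite, which I would verify one variable at a time, using that each constituent bijection is already natural in the variables it involves. Naturality in $P$ governs the adjunction $\freeVar\otimes T\dashv\ihom{T,\freeVar}$: for a \CuMor{} $\gamma\colon P\to P'$ I would check that post-composition with $\gamma$ commutes with the assignment $\tilde{\alpha}(a,b)=\sigma_{T,P}(\alpha(a))(b)$, which reduces to the naturality of the endpoint map (\autoref{prp:Q:endpointnatural}) applied to the induced $\CatQ$-morphism $\gamma_*^{\CatQ}\colon\CatCuMor[T,P]\to\CatCuMor[T,P']$, and that it commutes trivially with the assignment $\beta\mapsto\big((a,b)\mapsto\beta(a\otimes b)\big)$. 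Naturality in $S$ is handled by precomposing with a \CuMor{} and using the contravariant functoriality of both $\ihom{\freeVar,P}$ and $\freeVar\otimes T$; naturality in $T$ by combining the functoriality of $\ihom{\freeVar,\freeVar}$ from \autoref{pgr:bivarCu:ihom_functor} with that of the tensor product recalled in \autoref{pgr:prelim:tensmaps}. In each case it suffices to compare the two resulting bimorphisms on simple tensors, which by the second part of \autoref{prp:prelim:tensCu} determines the associated \CuMor{s}.

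I expect the only real, albeit routine, difficulty to lie in the naturality in $T$, since $T$ occurs simultaneously inside $\ihom{T,P}$, inside the tensor factor $S\otimes T$, and inside the bimorphism domain $S\times T$; one must check that the three functorial actions induced by a \CuMor{} $\beta\colon T\to T'$ are mutually compatible under both bijections. Matching the behaviour of $\sigma_{T,P}$ under the functoriality of \autoref{pgr:bivarCu:ihom_functor} against the action of $\id_S\otimes\beta$ on $S\otimes T$ settles this, and the theorem then follows immediately.
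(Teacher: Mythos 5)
Your proposal is correct and follows essentially the same route as the paper's own proof, which likewise obtains the first bijection by citing \autoref{prp:bivarCu:bij_morToIhom_bimor} and the second from the universal property of the tensor product in \autoref{prp:prelim:tensCu}, with the \pom{} structure preserved because each cited result preserves it. Your additional verification of naturality (in particular the care taken with the variable $T$) is sound but goes beyond what the paper writes out, since naturality is already asserted in the cited lemmas.
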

\begin{proof}
The first bijection is obtained from \autoref{prp:bivarCu:bij_morToIhom_bimor}.
The second bijection follows from \autoref{prp:prelim:tensCu}.
It is also shown in these results that the bijections respect the structure of the (bi)morphism sets as \pom{s}.
\end{proof}

Let $T$ be a \CuSgp{}.
We consider the functor $\freeVar\otimes T\colon\CatCu\to\CatCu$ given by tensoring with $T$.
It follows from \autoref{prp:bivarCu:ihomAdjoint} that the functor $\ihom{T,\freeVar}$ is a right adjoint of $\freeVar\otimes T$.
By definition, this shows that the monoidal category $\CatCu$ is closed, and we obtain the following result:

\begin{thm}
\label{prp:bivarCu:CuClosed}
The category $\CatCu$ of abstract Cuntz semigroups is a closed, symmetric, mo\-noi\-dal category.
\end{thm}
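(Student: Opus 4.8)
The plan is to recognize that nearly all of the work is already done, and that the statement is essentially a repackaging of \autoref{prp:bivarCu:ihomAdjoint}. First I would note that the symmetric monoidal structure on $\CatCu$ is already in hand: by \autoref{pgr:prelim:tensmaps} the tensor product is a bifunctor with unit object $\NNbar$, and the associativity, unit and symmetry isomorphisms (together with their coherence) were established in \cite{AntPerThi14arX:TensorProdCu}. Hence the only genuinely new assertion is that $\CatCu$ is \emph{closed}.

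By the definition recalled in \autoref{pgr:prelim:closedCat}, closedness means that for every \CuSgp{} $T$ the endofunctor $\freeVar\otimes T\colon\CatCu\to\CatCu$ admits a right adjoint. I would take the candidate to be $\ihom{T,\freeVar}\colon\CatCu\to\CatCu$, which is a genuine functor by \autoref{pgr:bivarCu:ihom_functor}. The adjunction is then precisely the natural bijection
\[
\CatCuMor\big( S,\ihom{T,P} \big) \cong \CatCuMor\big( S\otimes T,P \big)
\]
of \autoref{prp:bivarCu:ihomAdjoint}: holding $T$ fixed and letting $S$ and $P$ range over $\CatCu$, this says exactly that $\freeVar\otimes T$ is left adjoint to $\ihom{T,\freeVar}$, which is what closedness requires.

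The single point deserving care is \emph{naturality}: a family of bijections of hom-sets, one per triple, is not yet an adjunction, and one must verify that the bijection is natural in $S$ (contravariantly) and in $P$ (covariantly). I expect this to be the main obstacle if one were to trace it out by hand, but here it comes for free: the bijection of \autoref{prp:bivarCu:ihomAdjoint} factors through the coreflection adjunction of \autoref{thm:Q:coreflection}, the tensor--hom correspondence in $\CatQ$ of \autoref{prp:Q:ihom}, and the identification of $\CatCu$- with $\CatQ$-bimorphisms of \autoref{prp:bivarCu:fullSubcatBimor}, each of which is already asserted to be natural and to respect the \pom{} structure. Since a composite of natural bijections is again natural, the assembled bijection is natural as needed. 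Combining this closedness with the recalled symmetric monoidal structure then yields the theorem.
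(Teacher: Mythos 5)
Your proposal is correct and matches the paper's own proof: the paper likewise obtains the theorem by combining the symmetric monoidal structure already established in \cite{AntPerThi14arX:TensorProdCu} with \autoref{prp:bivarCu:ihomAdjoint}, which exhibits $\ihom{T,\freeVar}$ as right adjoint to $\freeVar\otimes T$. Your additional remark on naturality is a fair elaboration of what the paper leaves implicit in the word ``natural'' in \autoref{prp:bivarCu:ihomAdjoint}, since that bijection is indeed assembled from the natural correspondences of \autoref{thm:Q:coreflection}, \autoref{prp:Q:ihom} and \autoref{prp:bivarCu:fullSubcatBimor}.
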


Every closed symmetric monoidal category is enriched over itself, as noted in \autoref{pgr:prelim:closedCat}.
Given \CuSgp{s} $S$ and $T$, the \CuSgp{} $\ihom{S,T}$ plays the role of morphisms from $S$ to $T$.
First, we show that \CuMor{s} $S\to T$ correspond to compact elements in $\ihom{S,T}$.

\begin{prp}
\label{prp:bivarCu:cpctElt}
Let $S$ and $T$ be \CuSgp{s}.
Then there is a natural bijection $\ihom{S,T}_c\cong\CatCuMor(S,T)$, between \CuMor{s} $S\to T$ and compact elements in $\ihom{S,T}$.
A \CuMor{} $\varphi\colon S\to T$ is associated with the class in $\ihom{S,T}$ of the constant path with value $\varphi$.
Conversely, given a compact element in $\ihom{S,T}$ represented by a path $(\varphi_\lambda)_\lambda$, then for $\lambda$ close enough to $1$ the map $\varphi_\lambda$ is a \CuMor{} and independent of $\lambda$. 
\end{prp}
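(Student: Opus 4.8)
The plan is to write down the candidate bijection explicitly and to recover its inverse through the compact-containment criterion of \autoref{prp:path:pathwaybelow}. First I would treat the map from morphisms to compact elements. Given a \CuMor{} $\varphi\colon S\to T$, recall that $\varphi\prec\varphi$ holds precisely because $\varphi$ preserves $\ll$ (the observation following \autoref{dfn:bivarCu:auxGenMor}); hence the constant assignment $c_\varphi\colon I_\QQ\to\CatCuMor[S,T]$ with $c_\varphi(\lambda)=\varphi$ is a path in $(\CatCuMor[S,T],\prec)$ and defines an element $[c_\varphi]\in\ihom{S,T}$. Applying \autoref{prp:path:pathwaybelow} to $[c_\varphi]\ll[c_\varphi]$, this holds if and only if there is $\mu\in I_\QQ$ with $c_\varphi\prec c_\varphi(\mu)=\varphi$, i.e.\ if and only if $\varphi\prec\varphi$; so $[c_\varphi]$ is compact. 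Injectivity of $\varphi\mapsto[c_\varphi]$ is then immediate: since $\prec$ is an auxiliary relation it is contained in $\le$ (axiom~(1) of \autoref{dfn:prelim:auxRel}), so $[c_\varphi]=[c_\psi]$ unwinds to $\varphi\prec\psi$ and $\psi\prec\varphi$, whence $\varphi=\psi$ by antisymmetry of the pointwise order.

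The heart of the argument is surjectivity, which simultaneously yields the explicit ``converse'' description. Let $[\pathCu{f}]$ with $\pathCu{f}=(\varphi_\lambda)_\lambda$ be compact. By \autoref{prp:path:pathwaybelow} there is $\mu\in I_\QQ$ with $\varphi_\nu\prec\varphi_\mu$ for all $\nu\in I_\QQ$. Using that $\prec$ is transitive (a consequence of the two axioms in \autoref{dfn:prelim:auxRel}), I would argue as follows: for $\mu<\lambda'<\lambda$ the compactness bound gives $\varphi_\lambda\prec\varphi_\mu$ and the path condition gives $\varphi_\mu\prec\varphi_{\lambda'}$, hence $\varphi_\lambda\prec\varphi_{\lambda'}$; combined with $\varphi_{\lambda'}\prec\varphi_\lambda$ (path condition) and the containment $\prec\,\subseteq\,\le$, this forces $\varphi_{\lambda'}=\varphi_\lambda$. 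Thus $\varphi_\lambda$ is independent of $\lambda$ on $(\mu,1)\cap\QQ$; write $\varphi$ for the common value, and note $\varphi\prec\varphi$ (apply the displayed equalities once more), so that $\varphi$ is a \CuMor{}. Finally I would verify $[\pathCu{f}]=[c_\varphi]$: since $\varphi_\mu\prec\varphi$ (path condition from $\mu$ into $(\mu,1)$), we get $\varphi_\lambda\prec\varphi_\mu\prec\varphi$ for every $\lambda$, giving $\pathCu{f}\precsim c_\varphi$; and since $\varphi=\varphi_\nu$ with $\varphi\prec\varphi$ for any $\nu\in(\mu,1)\cap\QQ$, we get $c_\varphi\precsim\pathCu{f}$. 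Hence $[\pathCu{f}]=[c_\varphi]$, proving that the map is onto.

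Naturality in both variables is formal and follows from the description of the functoriality maps in \autoref{pgr:bivarCu:ihom_functor}: for a \CuMor{} $\alpha\colon S_1\to S_2$ the induced map $\alpha^*=\tau(\alpha^*_{\CatQ})$ sends $[c_\varphi]$ to $[c_{\varphi\circ\alpha}]$, matching precomposition $\varphi\mapsto\varphi\circ\alpha$ on \CuMor{s}, and analogously $\beta_*$ sends $[c_\varphi]$ to $[c_{\beta\circ\varphi}]$. The only non-formal point, and the main obstacle, is the surjectivity step of the second paragraph: the single index $\mu$ furnished by \autoref{prp:path:pathwaybelow} must be leveraged, via transitivity of $\prec$ and the path condition, both into the constancy of $(\varphi_\lambda)_\lambda$ near $1$ and into the way-below preservation of its limiting value; once that is in place, everything else reduces to the elementary properties of auxiliary relations.
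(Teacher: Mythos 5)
Your proof is correct, but it takes the concrete route where the paper's displayed argument is categorical. The paper notes that verifying the two described associations directly is ``straightforward'' and then proves the bijection abstractly: it identifies $P_c\cong\CatCuMor(\NNbar,P)$ for any \CuSgp{} $P$ (a \CuMor{} $\NNbar\to P$ corresponds to the compact element $\varphi(1)$), and then chains
\[
\ihom{S,T}_c
\cong \CatCuMor\big(\NNbar,\ihom{S,T}\big)
\cong \CatCuMor\big(\NNbar\otimes S,T\big)
\cong \CatCuMor(S,T),
\]
using the adjunction of \autoref{prp:bivarCu:ihomAdjoint} and the unit isomorphism $\NNbar\otimes S\cong S$. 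You instead carry out the direct verification in full: the constant path $c_\varphi$ is a path precisely because $\varphi\prec\varphi$ characterizes \CuMor{s}, compactness of $[c_\varphi]$ drops out of the criterion in \autoref{prp:path:pathwaybelow}, and for surjectivity you leverage the single index $\mu$ from that criterion together with transitivity of the auxiliary relation and antisymmetry of the pointwise order to force $(\varphi_\lambda)_\lambda$ to be constant on $(\mu,1)\cap\QQ$ with \CuMor{} value; all of these steps check out, including the equivalences $\pathCu{f}\precsim c_\varphi\precsim\pathCu{f}$ and the naturality via $\alpha^*=\tau(\alpha^*_{\CatQ})$ and $\beta_*=\tau(\beta_*^{\CatQ})$. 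What your approach buys is that it actually proves the explicit ``converse'' description in the statement (eventual constancy of the representing path and way-below preservation of its terminal value), which the adjunction argument yields only as an abstract bijection whose concrete form would still need unwinding; what the paper's approach buys is brevity and the demonstration that the statement is a formal consequence of the closed monoidal structure already established, with no further path-level bookkeeping.
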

\begin{proof}
It is straightforward to verify that the described associations are well-defined and inverses of each other.
Alternatively, note that for every \CuSgp{} $P$, there is a natural identification of $P_c$ with $\CatCu(\NNbar,P)$, by associating to a \CuMor{} $\varphi\colon\NNbar\to P$ the compact element $\varphi(1)$.
Using this fact at the first step, using \autoref{prp:bivarCu:ihomAdjoint} at the second step, and using the isomorphism $\NNbar\otimes S\cong S$ at the third step, we obtain that
\[
\ihom{S,T}_c
\cong \CatCuMor\big( \NNbar,\ihom{S,T} \big)
\cong \CatCuMor\big( \NNbar\otimes S,T \big)
\cong \CatCuMor(S,T),
\]
as desired.
\end{proof}

In particular, the identity \CuMor{} $\id_S\colon S\to S$ naturally corresponds to a compact element in $\ihom{S,S}$, also denoted by $\id_S$.
Further, $\id_S$ also naturally corresponds to a \CuMor{} $j_S\colon\NNbar\to\ihom{S,S}$, which is the identity of $S$ for the enrichment of $\CatCu$ over itself.

Given \CuSgp{s} $S$ and $T$, recall that the \emph{counit map}, or \emph{evaluation map} is the \CuMor{} $\counit_T^S \colon\ihom{S,T}\otimes S\to T$  that corresponds to $\id_{\ihom{S,T}}$ under the identification $\CatCuMor \big( \ihom{S,T},\ihom{S,T}\big) \cong \CatCuMor\big( \ihom{S,T}\otimes S,T \big)$.

Given \CuSgp{s} $S$, $T$ and $P$, consider the following \CuMor{}:
\[
(\ihom{T,P}\otimes\ihom{S,T})\otimes S
\xrightarrow{\cong} \ihom{T,P}\otimes(\ihom{S,T}\otimes S)
\xrightarrow{\id\otimes\counit_T^S} \ihom{T,P}\otimes T
\xrightarrow{\counit_P^T} P.
\]
Under the identification $\CatCuMor\big( \ihom{T,P} \otimes \ihom{S,T}, \ihom{S,P} \big)
\cong \CatCuMor\big( \ihom{T,P} \otimes \ihom{S,T} \otimes S, P \big)$,
the above \CuMor{} corresponds to a \CuMor{}
\[
\circ\colon \ihom{T,P} \otimes \ihom{S,T} \to \ihom{S,P},
\]
that we will call the \emph{composition product}.
The composition product implements the composition of morphisms when viewing the category $\CatCu$ as enriched over itself. (See \cite{AntPerThi17pre:AbsbivarII} for further details.) 

\begin{rmk}
The order of the product in $KK$-theory is reversed from the one used here for the category $\CatCu$, that is, given \ca{s} $A,B$ and $D$, the product in $KK$-theory is as a bilinear map
\[
KK(A,D)\times KK(D,B) \to KK(A,B);
\]
see \cite[Section~18.1, p166]{Bla98KThy} and \cite[Before Lemma~2.2.9, p.73]{JenTho91ElementsKK}.

We have mainly two reasons for our choice of ordering for the composition product in the category $\CatCu$:
First, the composition product extends the usual composition of \CuMor{s} and our choice is compatible with the standard notation for composition of maps.
Second, our ordering agrees with that of the composition law of internal-homs in closed categories;
see \cite[Section~1.6, p.15]{Kel05EnrichedCat}.
\end{rmk}

\subsection{Examples}
\label{sec:bivarCu:examples}

In this subsection, we compute several examples of bivariant \CuSgp{s} $\ihom{S,T}$.
We mostly consider the case that $S$ and $T$ are the Cuntz semigroups of the Jacelon-Razak algebra $\mathcal{W}$, of the Jiang-Su algebra $\mathcal{Z}$, of a UHF-algebra of infinite type, or of the Cuntz algebra $\mathcal{O}_2$.


Recall that $\PPbar$ denotes the semigroup $[0,\infty]$ with the usual order and addition. It is known that $\PPbar\cong\Cu(\mathcal{W})$, the Cuntz semigroup of the Jacelon-Razak algebra $\mathcal{W}$ introduced in \cite{Jac13Projectionless} (see~\cite{Rob13Cone}). The product of real numbers extends to a natural product on $[0,\infty]$ giving $\PPbar$ the structure of a solid $\CatCu$-semiring;
see \cite[Definition~7.1.5, Example~7.1.7]{AntPerThi14arX:TensorProdCu}.

Let $M_1$ be defined as in \autoref{exa:Q:M}.
By \autoref{prp:Q:M}, $M_1$ is the Cuntz semigroup of a $\mathrm{II}_1$-factor $M$.

\begin{prp}
\label{prp:bivarCu:ihomPP}
There is a natural isomorphism $\ihom{\PPbar,\PPbar} \cong M_1$.
\end{prp}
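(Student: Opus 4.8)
The plan is to identify the $\CatQ$-semigroup $(\CatCu[\PPbar,\PPbar],\prec)$ with $(\PPbar,\prec_1)$ and then apply the functor $\tau$. Since $\tau$ is functorial (see \autoref{prp:path:functoriality}), any isomorphism of $\CatQ$-semigroups induces an isomorphism of the associated \CuSgp{s}; thus an isomorphism $(\CatCu[\PPbar,\PPbar],\prec)\cong(\PPbar,\prec_1)$ yields $\ihom{\PPbar,\PPbar}=\tau(\CatCu[\PPbar,\PPbar],\prec)\cong\tau(\PPbar,\prec_1)=M_1$, as desired. The whole argument therefore reduces to analysing the generalized \CuMor{s} $\PPbar\to\PPbar$ together with the auxiliary relation $\prec$ from \autoref{dfn:bivarCu:auxGenMor}.

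First I would classify the elements of $\CatCu[\PPbar,\PPbar]$. For $c\in[0,\infty]$, let $\varphi_c\colon\PPbar\to\PPbar$ be the map $\varphi_c(x)=cx$, with the conventions $0\cdot\infty=0$ and $c\cdot\infty=\infty$ for $c>0$. Each $\varphi_c$ is readily checked to be additive, order-preserving, to vanish at $0$, and to preserve suprema of increasing sequences, hence it lies in $\CatCu[\PPbar,\PPbar]$. Conversely, let $\varphi$ be an arbitrary generalized \CuMor{} and set $c:=\varphi(1)$. Additivity gives $\varphi(q)=qc$ for all rational $q\geq 0$, and monotonicity then forces $\varphi(x)=cx$ for every $x\in[0,\infty)$ by sandwiching $x$ between rationals (when $c=\infty$ this reads $\varphi(x)=\infty$ for $x>0$). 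Since $\infty=\sup_n n$, Scott continuity determines the remaining value $\varphi(\infty)=\sup_n\varphi(n)=c\cdot\infty$. Hence $\varphi=\varphi_c$, and $c\mapsto\varphi_c$ is a bijection $\PPbar\to\CatCu[\PPbar,\PPbar]$. It is immediate that this bijection preserves addition ($\varphi_c+\varphi_d=\varphi_{c+d}$) and order in both directions (evaluate at $x=1$ to recover $c$), so it is an isomorphism of \pom{s}.

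Next I would match the auxiliary relations. The key computation is the way-below relation on $\PPbar$: one checks directly that $x\ll y$ in $\PPbar$ if and only if $x=0$ or $x<y$; in particular $\infty\ll y$ holds for no $y$. Using \autoref{dfn:bivarCu:auxGenMor}, $\varphi_c\prec\varphi_d$ means $ca'\ll da$ whenever $a'\ll a$. If $c=\infty$, taking $0<a'<a<\infty$ gives $ca'=\infty\nll da$, so $\varphi_\infty\not\prec\varphi_d$ for every $d$. If $c<\infty$ and $c\leq d$, then for $a'\ll a$ one has $ca'=0$ or $ca'<da$ by a short case analysis (separating $a=\infty$ from $a<\infty$), so $\varphi_c\prec\varphi_d$. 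Finally, if $0<c<\infty$, testing with $a=1$ and $a'$ tending to $1$ from below forces $c\leq d$. Altogether $\varphi_c\prec\varphi_d$ if and only if $c<\infty$ and $c\leq d$, which is exactly the relation $\prec_1$ of \autoref{exa:Q:M}. Thus $c\mapsto\varphi_c$ is an isomorphism of $\CatQ$-semigroups $(\PPbar,\prec_1)\cong(\CatCu[\PPbar,\PPbar],\prec)$, and applying $\tau$ completes the proof.

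I expect the main obstacle to be the classification step, namely establishing that every generalized \CuMor{} $\PPbar\to\PPbar$ is linear: this is where the functional-equation-plus-monotonicity argument and the correct treatment of the endpoint at $\infty$ (and of the degenerate value $c=\infty$) must be handled with care. Once the identification $\CatCu[\PPbar,\PPbar]\cong\PPbar$ is in place, matching $\prec$ with $\prec_1$ is a matter of bookkeeping with the strict inequalities in the way-below relation, and the passage to $\ihom{\PPbar,\PPbar}$ is purely formal via functoriality of $\tau$.
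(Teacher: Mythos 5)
Your proposal is correct, and its overall architecture is exactly that of the paper: identify the $\CatQ$-semigroup $\big(\CatCuMor[\PPbar,\PPbar],\prec\big)$ with $(\PPbar,\prec_1)$ via $\varphi\mapsto\varphi(1)$, then apply the functor $\tau$ and quote the computation $\tau(\PPbar,\prec_1)=M_1$ from \autoref{exa:Q:M}. The one genuine difference is in the classification step. The paper does not prove linearity by hand: it invokes the fact that $\PPbar$ is a solid \CuSrg{}, so that every generalized \CuMor{} $\PPbar\to\PPbar$ is automatically $\PPbar$-linear (\autoref{prp:ring:linearGenCu}, that is, \cite[Proposition~7.1.6]{AntPerThi14arX:TensorProdCu}), whence $\varphi(x)=\varphi(1)x$ immediately. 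Your elementary substitute---additivity on the nonnegative rationals, order-preservation to sandwich irrational values, and Scott continuity to fix $\varphi(\infty)$, with the degenerate cases $c=0$ and $c=\infty$ treated separately---is a correct and self-contained replacement for that lemma. Your bookkeeping with the way-below relation on $\PPbar$ (namely $x\ll y$ iff $x=0$ or $x<y$) then reproduces the paper's matching of $\prec$ with $\prec_1$, including the essential exclusion $\varphi_\infty\nprec\varphi_d$, which the paper obtains from the single test $1\ll 2$ but $\varphi(1)=\infty\nll\infty=\psi(2)$, and the converse direction, which the paper verifies by the same computation $\varphi(s)=\varphi(1)s\ll\varphi(1)t\leq\psi(1)t=\psi(t)$ that underlies your case analysis. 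What the solidity route buys is brevity and uniformity---the identical principle powers the companion computations in \autoref{prp:bivarCu:ihomRpRq} and \autoref{exa:bivarCu:ihomRpP}; what your route buys is independence from the semiring machinery of \autoref{sec:ring} at the cost of a page of direct estimates.
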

\begin{proof}
We show that the $\CatQ$-semigroup $(\CatCuMor[\PPbar,\PPbar],\prec)$ is isomorphic to $(\PPbar,\prec_1)$, where $\prec_1$ is the auxiliary relation defined in \autoref{exa:Q:M}.
Applying the $\tau$-construction, and using the arguments in \autoref{exa:Q:M} at the last step, we then obtain
\[
\ihom{\PPbar,\PPbar}
= \tau\left( \CatCuMor[\PPbar,\PPbar],\prec \right)
\cong \tau\left( \PPbar,\prec_1 \right) = M_1.
\]

Since $\PPbar$ is a solid $\CatCu$-semiring, any generalized \CuMor{} $\varphi\colon\PPbar\to\PPbar$ is $\PPbar$-linear (see \cite[Proposition~7.1.6]{AntPerThi14arX:TensorProdCu}). Thus, we have $\varphi(x)=\varphi(1)x$ for all $x\in\PPbar$.
We may identify $\CatCuMor[\PPbar,\PPbar]$ with $\PPbar$ by $\varphi\mapsto\varphi(1)$ and this is easily seen to be an additive order-isomorphism.
To conclude the argument, we need to show that under this identification, the auxiliary relation $\prec$ on $\CatCuMor[\PPbar,\PPbar]$ corresponds to the auxiliary relation $\prec_1$ on $\PPbar$ as defined in \autoref{exa:Q:M}.

Let $\varphi,\psi\in\CatCuMor[\PPbar,\PPbar]$.
Clearly $\varphi\prec\psi$ implies $\varphi(1)\leq\psi(1)$.
Moreover, if $\varphi(1)=\psi(1)=\infty$, then $\varphi\nprec\psi$, since $\varphi(1)=\infty\nll\infty=\psi(2)$ while $1\ll 2$.
Thus, $\varphi\prec\psi$ implies $\varphi(1)\prec_1\psi(1)$.
Conversely, assume that $\varphi(1)\prec_1\psi(1)$.
By definition, $\varphi(1)$ is finite, and $\varphi(1)\leq\psi(1)$.
To show that $\varphi\prec\psi$, let $s,t\in\PPbar$ satisfy $s\ll t$.
Using that $\varphi(1)$ is finite at the second step, we deduce that
\[
\varphi(s) = \varphi(1) s \ll \varphi(1) t \leq \psi(1)t = \psi(t). \qedhere
\]
\end{proof}

We let $Z$ be the disjoint union $\NN\sqcup(0,\infty]$, with elements in $\NN$ being compact, and with elements in $(0,\infty)$ being soft. It is known that $Z$ is isomorphic to the Cuntz semigroup of the Jiang-Su algebra $\mathcal{Z}$ introduced in \cite{JiaSu99Projectionless} (see \cite{PerTom07Recasting} and also \cite{BroTom07ThreeAppl}). To distinguish elements in both parts, we write $a'$ (with a prime symbol) for the soft element of value $a$.
For example, the compact one, denoted $1$, corresponds the class of the unit in $\mathcal{Z}$;
and the soft one, denoted $1'$, corresponds to the class of a positive element $x$ in $\mathcal{Z}$ that has spectrum $[0,1]$ and with $\lim_{n\to\infty}\tau(x^{1/n})=1$, for the unique trace $\tau$ on $\mathcal{Z}$.

Order and addition are the usual inside the components $\NN$ and $(0,\infty]$ of $Z$.
Given $a\in\NN$ and $b'\in(0,\infty]$, we have $a+b'=(a+b)'$ (the soft part is absorbing), and we have $a\leq b'$ if and only $a'<b'$, and we have $a\geq b'$ if and only if $a'\geq b'$.

We have a natural commutative product in $Z$, extending the natural products in the components $\NN$ and $(0,\infty]$, and such that $0a=0$ for every $a\in Z$, and such that $ab'=(ab)'$ for $a\in\NN_{>0}$ and $b'\in(0,\infty)$.
Note that $1$ (the compact one) is a unit for this semiring, but $1'$ is not.
Indeed, we have $1'1=1'$. This gives $Z$ the structure of a solid $\CatCu$-semiring
T
(see \cite[Section~7.3]{AntPerThi14arX:TensorProdCu}).

Given a supernatural number $q$ satisfying $q=q^2\neq 1$, we let $\NN[\tfrac{1}{q}]$ denote the set of nonnegative rational numbers whose denominators divide $q$, with usual addition.
Let $R_q=\NN[\tfrac{1}{q}]\sqcup(0,\infty]$, with elements in $\NN[\tfrac{1}{q}]$ being compact, and with elements in $(0,\infty]$ being soft.
Addition and order in $R_q$ is defined in analogy with $Z$.
If $M_q$ denotes the UHF-algebra of type $q$, then it is known that $\Cu(M_q)\cong R_q$. Analogous to the case for $Z$, we can define a multiplication on $R_q$, giving it the structure of a solid $\CatCu$-semiring
(see \cite[Section~7.4]{AntPerThi14arX:TensorProdCu}).

We exclude zero as a supernatural number.
However, $1$ is supernatural number that agrees with its square.
It is consistent to let $R_1$ denote the Cuntz semigroup of the Jiang-Su algebra $\mathcal{Z}$.
Thus, we set $R_1:=Z$, which simplifies the statement of \autoref{prp:bivarCu:ihomRpRq} below.

Given supernatural numbers $p$ and $q$ satisfying $p=p^2$ and $q=q^2$, we have $R_p\otimes R_q\cong R_{pq}$.
In particular, $Z\otimes R_p = R_1\otimes R_p \cong R_p$.
Moreover, if we let $Q=\QQ^+\sqcup (0,\infty]$, then $Q$ is isomorphic to the Cuntz semigroup of the universal UHF-algebra (whose $K_0$-group is isomorphic to the rational numbers). We have $Q\otimes R_p\cong Q$.

\begin{prp}
\label{prp:bivarCu:ihomRpRq}
Let $p$ and $q$ be supernatural numbers with $p=p^2$ and $q=q^2$.
If $p$ divides $q$, then $\ihom{R_p,R_q}\cong R_q$.
If $p$ does not divide $q$, then $\CatCu(R_p,R_q)=\{0\}$ and $\ihom{R_p,R_q}\cong\PPbar$.
\end{prp}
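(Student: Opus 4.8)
The plan is to proceed exactly as in the proof of \autoref{prp:bivarCu:ihomPP}: first identify the $\CatQ$-semigroup $(\CatCuMor[R_p,R_q],\prec)$ explicitly, and then apply the $\tau$-construction together with \autoref{prp:Q:phiisisoforCu}, which gives $\tau(S,\ll)\cong S$ for any \CuSgp{} $S$. The basic observation driving the classification of generalized \CuMor{s} is that for every natural number $d$ dividing $p$ one has $d\cdot\tfrac1d=1$ in $R_p$, so additivity forces $d\,\varphi(\tfrac1d)=\varphi(1)$; thus $\varphi$ is strongly constrained by the single value $\varphi(1)$. Throughout I write $v(y)\in[0,\infty]$ for the value of $y\in R_q$, so that a compact element equals its value and a soft element $a'$ has value $a$.

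First I would treat the case $p\mid q$. Here $\NN[\tfrac1p]\subseteq\NN[\tfrac1q]$, so there is a unital inclusion of $\CatCu$-semirings $\kappa\colon R_p\hookrightarrow R_q$, which is a \CuMor{} and makes $R_q$ an $R_p$-semimodule. Since $R_p$ is solid, \autoref{prp:ring:linearGenCu} applies and shows that every generalized \CuMor{} $\varphi\colon R_p\to R_q$ is automatically $R_p$-linear; taking $w=1$ in $\varphi(x\cdot w)=x\cdot\varphi(w)$ yields $\varphi(x)=\kappa(x)\,\varphi(1)$. Hence $\varphi\mapsto\varphi(1)$ is an additive order-isomorphism $\CatCuMor[R_p,R_q]\cong R_q$ (this route also disposes of the delicate point that $R_1=Z$ is \emph{not} algebraic, so that morphisms out of it are not determined by their restriction to compacts alone). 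To match the auxiliary relations, testing $\varphi\prec\psi$ on the pair $1\ll1$ gives $\varphi(1)\ll\psi(1)$, so $\prec$ implies $\ll$; conversely, if $\varphi(1)\ll\psi(1)$ then for $a'\ll a$ in $R_p$ we have $\kappa(a')\ll\kappa(a)$, and since multiplication on $R_q$ is a $\CatCu$-bimorphism we obtain $\varphi(a')=\kappa(a')\,\varphi(1)\ll\kappa(a)\,\psi(1)=\psi(a)$. Thus $(\CatCuMor[R_p,R_q],\prec)\cong(R_q,\ll)$, and applying the functor $\tau$ (see \autoref{prp:path:functoriality}) together with \autoref{prp:Q:phiisisoforCu} gives $\ihom{R_p,R_q}\cong\tau(R_q,\ll)\cong R_q$.

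Next I would treat the case $p\nmid q$, where necessarily $p\neq1$ and hence $R_p$ \emph{is} algebraic, so every generalized \CuMor{} is determined by its restriction to $\NN[\tfrac1p]$. The number-theoretic heart of the argument is a divisibility obstruction: if $\varphi(1)=:c$ is compact, then each $\varphi(\tfrac1d)$ is compact (a soft element cannot be a $d$-fold multiple of the compact $c$), forcing $c/d\in\NN[\tfrac1q]$ for \emph{all} $d\mid p$. Choosing a prime $\ell$ with $\ell^\infty\mid p$ but $\ell\nmid q$ (which exists precisely because $p\nmid q$ and both are idempotent) and letting $d=\ell^k\to\infty$ shows $c=0$. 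In particular any honest \CuMor{} sends compacts to compacts, hence has $\varphi(1)=0$ and so is $0$; this proves $\CatCuMor(R_p,R_q)=\{0\}$. For generalized morphisms we conclude that $\varphi(1)$ is either $0$ or soft, and for soft $\varphi(1)=a'$ the relations $d\,\varphi(\tfrac1d)=a'$ force $\varphi(\tfrac1d)=(a/d)'$ and hence $\varphi(x)=(a\,v(x))'$ on compacts. Writing $\varphi_a$ for the morphism with $\varphi_a(1)=a'$, this gives an additive order-isomorphism $\CatCuMor[R_p,R_q]\cong\PPbar$ via $\varphi_a\mapsto a$.

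Finally I would identify the auxiliary relation in this case. Testing $\varphi_a\prec\varphi_b$ on $1\ll1$ gives $a'\ll b'$ in $R_q$, i.e.\ $a<b$; conversely $a<b$ gives $a\,v(x')<b\,v(x)$ whenever $x'\ll x$ (as then $v(x')\leq v(x)$), so that $\varphi_a(x')\ll\varphi_b(x)$. Hence $\varphi_a\prec\varphi_b$ if and only if $a=0$ or $a<b$, which is exactly the relation $\ll$ on $\PPbar$. The crucial point---and the reason the answer here is $\PPbar$ rather than the $M_1$ obtained in \autoref{prp:bivarCu:ihomPP}---is that $R_p$ contains the nonzero compact element $1$ with $1\ll1$, whose image $a'$ is \emph{soft} and therefore satisfies $a'\nll a'$; this is what makes $\prec$ strict. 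Applying $\tau$ and \autoref{prp:Q:phiisisoforCu} to the \CuSgp{} $\PPbar$ then yields $\ihom{R_p,R_q}=\tau(\CatCuMor[R_p,R_q],\prec)\cong\tau(\PPbar,\ll)\cong\PPbar$. The main obstacle throughout is the classification of $\CatCuMor[R_p,R_q]$: both the divisibility obstruction separating the two cases and the non-algebraicity of $R_1=Z$ (which I handle via the solidity result \autoref{prp:ring:linearGenCu}) require care, whereas once the underlying $\CatQ$-semigroup and its auxiliary relation are pinned down the passage through $\tau$ is formal.
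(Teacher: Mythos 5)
Your proposal is correct and follows essentially the same route as the paper: classify $\CatCuMor[R_p,R_q]$ via $\varphi\mapsto\varphi(1)$ (using $R_p$-linearity from solidity when $p\mid q$, and the prime-divisibility obstruction forcing images into the soft part when $p\nmid q$), identify the auxiliary relation $\prec$ with $\ll$ on $R_q$ resp.\ $\PPbar$, and apply $\tau$ together with \autoref{prp:Q:phiisisoforCu}. The only deviations are cosmetic: you deduce $\CatCu(R_p,R_q)=\{0\}$ from algebraicity of $R_p$ (valid, since $p\nmid q$ forces $p\neq1$) where the paper invokes simplicity of $R_p$, and you spell out the determination of $\varphi$ by $\varphi(1)$ on compacts, which the paper leaves implicit.
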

\begin{proof}
First, assume that $p$ divides $q$.
Then $R_p\cong R_p\otimes R_p$ and $R_q\cong R_q\otimes R_p$.
Let $\varphi\colon R_p\to R_q$ be a generalized \CuMor.
It follows from \cite[Proposition~7.1.6]{AntPerThi14arX:TensorProdCu} that $\varphi$ is $R_p$-linear.
Thus, $\varphi$ is determined by the image of the unit.
Moreover, for every $a\in R_q$, there is a generalized \CuMor{} $\varphi\colon R_p\to R_q$ with $\varphi(1)=a$, given by $\varphi(t)=ta$ for $t\in R_p$.
Thus, there is a bijection $\CatCuMor[R_p,R_q]\cong R_q$ given by identifying $\varphi$ with $\varphi(1)$.
It is straightforward to check that under this identification, the relation $\prec$ on $\CatCuMor[R_p,R_q]$ corresponds precisely to the way-below relation on $R_q$.
It follows that
\[
\ihom{R_p,R_q} = \tau\big( \CatCuMor[R_p,R_q],\prec \big) \cong \tau\big( R_q,\ll \big) \cong R_q,
\]
as desired.

Assume now that $p$ does not divide $q$.
Let $r$ be a prime number dividing $p$ but not $q$.
Every element of $R_p$ is divisible by arbitrary powers of $r$.

On the other hand, we claim that only the soft elements of $R_q$ are divisible by arbitrary powers of $r$.
Indeed, every element of $R_q$ is either compact or nonzero and soft.
Moreover, the sum of a nonzero soft element with any other element in $R_q$ is soft.
It follows that if a compact element of $R_q$ is divisible in $R_q$ then it is also divisible in the monoid of compact elements of $R_q$, which we identify with $\NN[\tfrac{1}{q}]$.
However, since $r$ does not divide $q$, the only element in $\NN[\tfrac{1}{q}]$ that is divisible by arbitrary powers of $r$ is the zero element, which is soft.

It follows that every generalized \CuMor{} $R_p\to R_q$ has its image contained in the soft part of $R_q$.
In particular, if $\varphi\colon R_p\to R_q$ is a $\CatCu$-morphism, then every compact element of $R_p$ is sent to zero by $\varphi$.
Using that $R_p$ is simple, it follows that $\varphi$ is the zero map.
Thus, $\CatCu(R_p,R_q)=\{0\}$, as desired.

We identify $\PPbar$ with the soft part of $R_p$, and similarly for $R_q$.
Let $\varphi\colon R_p\to R_q$ be a generalized \CuMor{}.
We have seen that $\varphi(1)$ belongs to $\PPbar=(R_q)_\txtSoft$.
Moreover, for every $a\in\PPbar$ there is a generalized \CuMor{} $\varphi\colon R_p\to \PPbar\subseteq R_q$ with $\varphi(1)=a$, given by $\varphi(t)=ta$ for $t\in R_p$.
Thus, there is a bijection $\CatCuMor[R_p,R_q]\cong \PPbar$ given by identifying $\varphi$ with $\varphi(1)$.
It is straightforward to check that under this identification, the relation $\prec$ on $\CatCuMor[R_p,R_q]$ corresponds to the way-below relation on $\PPbar$.
As above, it follows that
\[
\ihom{R_p,R_q} = \tau\big( \CatCuMor[R_p,R_q],\prec \big) \cong \tau\big( \PPbar,\ll \big) \cong \PPbar. \qedhere
\]
\end{proof}

\begin{exa}
\label{exa:bivarCu:ihomRpRq}
By \autoref{prp:bivarCu:ihomRpRq}, there are natural isomorphisms $\ihom{Z,Z} \cong Z$ and $\ihom{Q,Q} \cong Q$.
More generally, for every supernatural number $q$ with $q=q^2$, there are natural isomorphisms $\ihom{Z,R_q} \cong R_q$ and $\ihom{R_q,Q} \cong Q$.
\end{exa}

\begin{exa}
\label{exa:bivarCu:ihomRpP}
Let $q$ be a supernatural number with $q=q^2$.
Then there are natural isomorphisms $\ihom{R_q,\PPbar}\cong\PPbar$ and $\ihom{\PPbar,R_q}\cong M_1$, which can be proved similarly as Propositions~\ref{prp:bivarCu:ihomRpRq} and~\ref{prp:bivarCu:ihomPP}.
In particular, we have $\ihom{Z,\PPbar}\cong\PPbar$ and $\ihom{\PPbar,Z}\cong M_1$.
\end{exa}

Given $k\in\NN$, we set $E_k:=\{0,1,\dots,k,\infty\}$, equipped with the natural order and addition as a subset of $\NNbar$, with the convention that $a+b=\infty$ whenever $a+b>k$ in $\NNbar$.
With the obvious multiplication, $E_k$ is a solid $\CatCu$-semiring (see, e.g. \cite[Example~8.1.2]{AntPerThi14arX:TensorProdCu}).
Note that $E_0=\{0,\infty\}$ is the Cuntz semigroup of the Cuntz algebra $\mathcal{O}_2$ (or of any other simple, purely infinite \ca{}).

\begin{prp}
\label{prp:bivarCu:ihomEkEl}
Let $k,l$ be natural numbers.
Let $\lceil{\frac{l+1}{k+1}}\rceil$ denote the smallest natural number larger than or equal to ${\frac{l+1}{k+1}}$.
Then $\ihom{E_k,E_l}$ is isomorphic to the sub-$\CatCu$-semigroup $\{0,\lceil{\frac{l+1}{k+1}}\rceil,\dots,l,\infty\}$ of $E_l$.
\end{prp}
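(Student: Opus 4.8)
The plan is to first determine the $\CatQ$-semigroup $(\CatCuMor[E_k,E_l],\prec)$ completely by hand, and then to observe that the $\tau$-construction does not change it, so that $\ihom{E_k,E_l}=\tau(\CatCuMor[E_k,E_l],\prec)$ is isomorphic to $\CatCuMor[E_k,E_l]$ itself. The point driving everything is that, since $E_k$ and $E_l$ are finite, every increasing sequence is eventually constant and every element is compact; this makes both the computation of the morphisms and the collapse of the path construction elementary.

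First I would classify the generalized \CuMor{s} $\varphi\colon E_k\to E_l$. Since $E_k$ is finite, preservation of suprema of increasing sequences is automatic for any order-preserving map, so such a $\varphi$ is exactly an order-preserving, additive, zero-preserving monoid map. Assume $k\geq 1$ (the case $k=0$ is handled separately and gives $\{0,\infty\}$, which matches the statement since then $\lceil\tfrac{l+1}{k+1}\rceil=l+1>l$). Writing $a:=\varphi(1)$, additivity forces $\varphi(n)=na$ for $1\leq n\leq k$, and, using that the sum $k+1$ of $k,1\in E_k$ equals $\infty$, also $\varphi(\infty)=(k+1)a$. On the other hand $\infty=\infty+\infty$ forces $\varphi(\infty)=2\varphi(\infty)$, so that $(k+1)a\in\{0,\infty\}$. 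If $a=0$ then $\varphi\equiv 0$; if $a\geq 1$ then $(k+1)a\geq k+1\geq 1$, so necessarily $(k+1)a=\infty$ in $E_l$, that is $(k+1)a>l$, equivalently $a\geq\lceil\tfrac{l+1}{k+1}\rceil=:m_0$. Conversely, any $a\in\{0\}\cup\{m_0,\dots,l,\infty\}$ yields a well-defined such $\varphi$, the only point to check being additivity in the capped cases, which follows from $(n+m)a\geq(k+1)a=\infty$ whenever $n+m>k$. Thus $\varphi\mapsto\varphi(1)$ is a bijection, clearly additive and an order-isomorphism, identifying $\CatCuMor[E_k,E_l]$ with the sub-\pom{} $\{0,m_0,\dots,l,\infty\}$ of $E_l$.

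It remains to run the path construction, and here the key observation is that, since $E_k$ and $E_l$ are finite, the way-below relation coincides with the order on each. Consequently the auxiliary relation $\prec$ of \autoref{dfn:bivarCu:auxGenMor} reads: $\varphi\prec\psi$ if and only if $\varphi(a')\leq\psi(a)$ for all $a'\leq a$ in $E_k$. Taking $a'=a$ shows $\varphi\prec\psi$ implies $\varphi\leq\psi$, while monotonicity of $\varphi$ gives the converse; hence $\prec$ equals $\leq$ on $\CatCuMor[E_k,E_l]$. Since $\CatCuMor[E_k,E_l]$ is a finite \pom{} it is itself a \CuSgp{} whose way-below relation is $\leq$, so $(\CatCuMor[E_k,E_l],\prec)=\iota(\CatCuMor[E_k,E_l])$, and \autoref{prp:Q:phiisisoforCu} applies to show that the endpoint map $\tau(\CatCuMor[E_k,E_l],\prec)\to\CatCuMor[E_k,E_l]$ is an order-isomorphism. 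Combining with the previous paragraph,
\[
\ihom{E_k,E_l}=\tau\big(\CatCuMor[E_k,E_l],\prec\big)\cong\CatCuMor[E_k,E_l]\cong\{0,m_0,\dots,l,\infty\},
\]
as desired. The only genuinely computational part is the extraction of the constraint $(k+1)a>l$, and hence of the bound $m_0=\lceil\tfrac{l+1}{k+1}\rceil$; the triviality of the $\tau$-construction is then a formal consequence of finiteness via \autoref{prp:Q:phiisisoforCu}.
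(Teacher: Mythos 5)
Your proof is correct and follows essentially the same route as the paper's: classify the generalized \CuMor{s} $E_k\to E_l$ by the image of $1$ subject to the constraint $(k+1)a=\infty$, and then observe that finiteness trivializes the path construction. You spell out details the paper leaves implicit --- the idempotency argument forcing $(k+1)a\in\{0,\infty\}$, the $k=0$ edge case, and the identification $\prec\,=\,\leq$ making the endpoint map of \autoref{prp:Q:phiisisoforCu} an isomorphism, which is just an explicit version of the paper's remark that every such map is already a \CuMor{} --- but the underlying argument is identical.
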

\begin{proof}
Let $\varphi\colon E_k\to E_l$ be a generalized $\CatCu$-morphism.
Then $\varphi$ is determined by the image of $1$, which can be zero or any element $a\in E_l$ such that $(k+1)a=\infty$.
Thus, for every $a\geq \frac{l+1}{k+1}$ there is a unique generalized $\CatCu$-morphism $E_k\to E_l$ given by $x\mapsto ax$.
Moreover, each such a map preserves the way-below relation and is therefore a $\CatCu$-morphism.
The desired result follows.
\end{proof}

\begin{exa}
\label{exa:bivarCu:ihomEkEl}
There is a natural isomorphism $\ihom{\{0,\infty\},\{0,\infty\}}\cong\{0,\infty\}$, and more generally $\ihom{E_k,E_k}\cong E_k$ for every $k\in\NN$.
\end{exa}

\section{Applications to \texorpdfstring{\ca{s}}{C*-algebras}}
\label{sec:appl}

Given \ca{s} $A$ and $B$, a map $\varphi\colon A\to B$ is called \emph{completely positive contractive} (abbreviated c.p.c.) if it is linear, contractive and for each $n\in\NN$ the amplification to $n\times n$-matrices $\varphi\otimes\id\colon A\otimes M_n \to B\otimes M_n$ is positive.
Every c.p.c.\ map $\varphi\colon A\to B$ induces a contractive, positive map $\varphi\otimes\id\colon A\otimes\KK\to B\otimes\KK$.

Two elements $a$ and $b$ in a \ca{} are called \emph{orthogonal}, denoted $a\perp b$, if $ab=a^*b=ab^*=a^*b^*=0$.
If $a$ and $b$ are self-adjoint, then $a\perp b$ if and only if $ab=0$.
A c.p.c.\ map $\varphi$ is said to have \emph{order-zero} if for all $a,b\in A$ we have that $a\perp b$ implies $\varphi(a)\perp\varphi(b)$.
We denote the set of c.p.c.\ order-zero maps by $\cpc{A,B}$.

The concept of c.p.c.\ order-zero maps was studied by Winter and Zacharias, \cite{WinZac09CpOrd0}, who also gave a useful structure theorem for such maps.
We present their result in a slightly different way.

\begin{thm}[{Winter and Zacharias, \cite[Theorem~3.3]{WinZac09CpOrd0}}]
\label{prp:appl:oz:charWZ}
Let $A$ and $B$ be \ca{s}, and let $\varphi\colon A\to B$ be a c.p.c.\ order-zero map.
Set $C:=C^*(\varphi(A))$, the sub-\ca{} of $B$ generated by the image of $\varphi$.
Then there exists a unital \stHom{} $\pi_\varphi\colon \widetilde{A}\to M(C)$, from the minimal unitization of $A$ to the multiplier algebra of $C$,  such that
\[
\varphi(ab) =\varphi(a)\pi_\varphi(b) = \pi_\varphi(a)\varphi(b),
\]
for $a,b\in\widetilde{A}$.
In particular, the element $h:=\varphi(1_{\widetilde{A}})$ is contractive, positive, it commutes with the image of $\pi_\varphi$, and $\varphi(a)=h\pi_\varphi(a)=\pi_\varphi(a)h$ for all $a\in A$.
\end{thm}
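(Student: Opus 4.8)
The plan is to treat the statement as a reformulation of the Winter--Zacharias structure theorem, so that the substantive work lies in producing the supporting \stHom{} $\pi_\varphi$, after which the final (``in particular'') clause is a purely formal consequence. First I would replace $B$ by $C=C^*(\varphi(A))$ and pass to the multiplier algebra $M(C)$, extending $\varphi$ to a unital c.p.c.\ map $\widetilde\varphi\colon\widetilde A\to M(C)$; this extension is forced by the requirement that it be strictly continuous, using that $\varphi(A)$ generates $C$ and that any approximate unit of $C$ converges strictly to $1_{M(C)}$. I then set $h:=\widetilde\varphi(1_{\widetilde A})\in M(C)_+$, a positive contraction, which is the candidate central support element. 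Note already that for $a\in A_+$ one has $\varphi(a)\leq\|a\|\,\varphi(1_{\widetilde A})=\|a\|\,h$, so that $\varphi(A)$ lies in the hereditary \ca{} of $h$; in particular $\overline{hC}=C$, i.e.\ $h$ is strictly positive in $C$.

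The heart of the argument is the functional identity
\[
\varphi(a)\varphi(b)=h\,\varphi(ab)\andSep \varphi(a)^*\varphi(b)=h\,\varphi(a^*b),
\]
valid for all $a,b\in A$. For orthogonal positive elements both sides vanish, which is exactly the order-zero hypothesis. For a single positive element $a$ it follows by restricting $\varphi$ to the commutative \ca{} $C^*(a)\cong C_0(\mathrm{sp}(a)\setminus\{0\})$ and approximating by finite sums of mutually orthogonal positive functions, on which the order-zero condition forces the factorization through $h$; polarization together with linearity and norm-density then yields the identity in general. This is the step I expect to be the main obstacle: both the orthogonal-approximation argument and the bookkeeping needed to keep all products inside $M(C)$ and to control the adjoint identity.

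Granting the identity, I would construct $\pi_\varphi$ as an approximate-reciprocal of $h$ applied to $\varphi$: with $g_\varepsilon(t)=\min\{t^{-1},\varepsilon^{-1}\}$, the fact that $\varphi(A)$ sits in the hereditary algebra of $h$ makes the nets $g_\varepsilon(h)\varphi(a)$ strictly Cauchy in $M(C)$, with limit denoted $\pi_\varphi(a)$. The identity above then gives, interpreted through these strict limits,
\[
\pi_\varphi(a)\pi_\varphi(b)=h^{-1}\varphi(a)\,h^{-1}\varphi(b)=h^{-2}\varphi(a)\varphi(b)=h^{-1}\varphi(ab)=\pi_\varphi(ab),
\]
so that $\pi_\varphi$ is multiplicative, while the adjoint identity together with $\varphi(a^*)=\varphi(a)^*$ yields $\pi_\varphi(a)^*=\pi_\varphi(a^*)$; hence $\pi_\varphi$ is a \stHom. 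Since $h$ is strictly positive in $C$, its support projection equals $1_{M(C)}$, so $\pi_\varphi(1_{\widetilde A})=1_{M(C)}$ and $\pi_\varphi$ is unital. Moreover $h\,\pi_\varphi(a)=\lim_\varepsilon h\,g_\varepsilon(h)\varphi(a)=\varphi(a)$, since $h\,g_\varepsilon(h)$ converges strictly to the support projection of $h$, which fixes $\varphi(a)$.

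Finally, the displayed factorization of the theorem follows by writing $\varphi(ab)=h\,\pi_\varphi(ab)=h\,\pi_\varphi(a)\pi_\varphi(b)=\varphi(a)\pi_\varphi(b)$, and symmetrically $\varphi(ab)=\pi_\varphi(a)\,h\,\pi_\varphi(b)=\pi_\varphi(a)\varphi(b)$, for all $a,b\in\widetilde A$. The ``in particular'' clause is then immediate: $h=\widetilde\varphi(1_{\widetilde A})$ is a positive contraction by construction, and putting $a=1_{\widetilde A}$ and $b=1_{\widetilde A}$ into the two factorizations gives $\varphi(c)=h\,\pi_\varphi(c)$ and $\varphi(c)=\pi_\varphi(c)\,h$ for every $c\in\widetilde A$; comparing these shows $h$ commutes with $\pi_\varphi(\widetilde A)$, and restricting to $c=a\in A$ yields $\varphi(a)=h\,\pi_\varphi(a)=\pi_\varphi(a)\,h$, as desired.
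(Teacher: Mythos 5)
You should first be aware that the paper contains no proof of this statement: it is quoted, with attribution, directly from Winter and Zacharias \cite[Theorem~3.3]{WinZac09CpOrd0}, so your proposal can only be compared with their argument. Measured against that, your outline has the right skeleton --- the reduction to $C=C^*(\varphi(A))$, the definition of $\pi_\varphi$ as a strict limit of $g_\varepsilon(h)\varphi(a)$, and the formal derivation of the displayed factorization and of the ``in particular'' clause are all fine --- but the step you yourself flag as the main obstacle is a genuine gap, not a deferred technicality. The identity $\varphi(a)\varphi(b)=h\varphi(ab)$ cannot be obtained by restricting to $C^*(a)$ and ``approximating by finite sums of mutually orthogonal positive functions'': first, such sums are not norm-dense in $C_0(\mathrm{sp}(a)\setminus\{0\})$, since they vanish on the gaps between the supports, and the standard remedy of writing $f=f_1+f_2$ with each $f_i$ a sum of pairwise orthogonal bumps leaves the cross term $\varphi(f_1)\varphi(f_2)$ uncontrolled, because $f_1$ and $f_2$ are not orthogonal; second, and more fundamentally, the order-zero hypothesis only constrains $\varphi(e)\varphi(e')$ for $e\perp e'$, and says nothing about $\varphi(e)^2$ versus $h\varphi(e^2)$ for a \emph{single} positive $e$ --- which is precisely the base case any such approximation scheme would need, so the proposed route is empty exactly where the content lies. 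Winter and Zacharias do not argue this way: they work in the bidual (equivalently, through a Stinespring-type dilation), where the support projection of $h$ is available and bounded monotone nets converge, and the weighted-homomorphism structure rests on Wolff's theorem on disjointness-preserving self-adjoint maps.

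Two further points would need repair even if the identity were granted. The strictly continuous extension $\widetilde{\varphi}$ --- equivalently, the existence of $h$ as a strict limit of $\varphi(u_\lambda)$ in $M(C)$ --- is asserted as ``forced'' but never proved; strict convergence of $\varphi(u_\lambda)c$ for $c\in C$ itself requires the multiplicative structure you are trying to establish, which is why in the original proof $h$ is extracted from the bidual picture rather than posited. And your verification of multiplicativity, $\pi_\varphi(a)\pi_\varphi(b)=h^{-2}\varphi(a)\varphi(b)$, moves $h^{-1}$ past $\varphi(a)$ before the commutation of $h$ with $\varphi(A)$ has been established; as written this is circular, since you deduce commutation only at the very end from the factorization. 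This particular circle is repairable --- combining $\varphi(a)^*\varphi(b)=h\varphi(a^*b)$ with the adjoint of $\varphi(b)^*\varphi(a)=h\varphi(b^*a)$ yields $h\varphi(c)=\varphi(c)h$ directly, and an approximate unit extends this to all of $\varphi(A)$ --- but that derivation must precede the construction of $\pi_\varphi$, not follow it.
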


This structure theorem has many interesting applications.
For instance, it implies that c.p.c.\ order-zero maps induce generalized $\CatCu$-morphisms.
Let us recall some details.
Let $\varphi\colon A\to B$ be a c.p.c.\ order-zero map.
Then the amplification $\varphi\otimes\id\colon A\otimes\KK\to B\otimes\KK$ is a c.p.c.\ order-zero map as well;
see \cite[Corollary~4.3]{WinZac09CpOrd0}.
Define $\Cu[\varphi]\colon\Cu(A)\to\Cu(B)$ by
\[
\Cu[\varphi]([a]) := [(\varphi\otimes\id)(a)],
\]
for $a\in (A\otimes\KK)_+$.
Then $\Cu[\varphi]$ is a generalized \CuMor;
see \cite[Corollary~4.5]{WinZac09CpOrd0} and \cite[2.2.7, 3.2.5]{AntPerThi14arX:TensorProdCu}.
We thus obtain a natural map
\[
\cpc{A,B} \to \CatCuMor[\Cu(A),\Cu(B)].
\]
Below, we will show that this map factors through $\ihom{\Cu(A),\Cu(B)}$.

\begin{pgr}
The theorem of Winter and Zacharias also allows us to define functional calculus for order-zero maps:
Let $\varphi\colon A\to B$ be a c.p.c.\ order-zero map.
Choose $C$, $\pi_\varphi$ and $h$ as in \autoref{prp:appl:oz:charWZ}.
Given a continuous function $f\colon[0,1]\to[0,1]$ with $f(0)=0$, we define $f(\varphi)\colon A\to B$ by $f(\varphi)(a) := f(h)\pi_\varphi(a)$ for $a\in A$;
see \cite[Corollary~4.2]{WinZac09CpOrd0}.

In particular, this allows us to define `cut-downs' of c.p.c.\ order-zero maps:
Given $\varepsilon>0$, we may apply the function $(\freeVar-\varepsilon)_+$ to $\varphi$.
To simplify notation, we set $\varphi_\varepsilon:=(\varphi-\varepsilon)_+$.
Thus, for $a\in A$ we have $\varphi_\varepsilon(a) = (h-\varepsilon)_+\pi_\varphi(a)$.
\end{pgr}

\begin{thm}
\label{prp:appl:oz:oz_ind_ihom}
Let $A$ and $B$ be \ca{s}, and let $\varphi\colon A\to B$ be a c.p.c.\ order-zero map.
For each $\varepsilon>0$, let $f_\varepsilon\colon\Cu(A)\to\Cu(B)$ be the generalized $\CatCu$-morphism induced by the c.p.c.\ order-zero map $\varphi_\varepsilon\colon A\to B$.
Then $\pathCu{f}=(f_{1-\lambda})_\lambda$ is a path in $\CatCuMor[\Cu(A),\Cu(B)]$.
Moreover, the endpoint of $\pathCu{f}$ is $\CatCu[\varphi]$, the generalized $\CatCu$-morphism induced by $\varphi$.
\end{thm}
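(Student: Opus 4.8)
The plan is to transport the whole problem, via a genuine $\CatCu$-morphism, into the Cuntz semigroup of the cone $C_0((0,1])\otimes A\otimes\KK$, where both assertions become transparent. First I would invoke the structure theorem \autoref{prp:appl:oz:charWZ}: with $h=\varphi(1_{\widetilde{A}})$ and $\pi_\varphi$ the supporting \stHom{}, the two commuting \stHom{s} $g\mapsto g(h)$ (defined on $C_0((0,1])$, using $0\le h\le1$) and $\pi_\varphi$ combine to a \stHom{} $\rho\colon C_0((0,1])\otimes A\to B$ with $\rho(g\otimes a)=g(h)\pi_\varphi(a)$ (there is no tensor-norm ambiguity since $C_0((0,1])$ is nuclear); this is the Winter--Zacharias correspondence \cite{WinZac09CpOrd0}. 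Writing $t$ for the canonical generator of $C_0((0,1])$, the order-zero functional calculus gives $\varphi_\varepsilon=\rho\big((t-\varepsilon)_+\otimes\freeVar\big)$. Amplifying, $\Phi:=\rho\otimes\id_\KK$ is a \stHom, so by \autoref{prp:prelim:functorCu} it induces a genuine \CuMor{} $\Cu(\Phi)$ preserving order, the way-below relation and suprema of increasing sequences. The key identity, valid for $a\in(A\otimes\KK)_+$ by continuity on elementary tensors, is
\[
f_\varepsilon([a]) = [(\varphi_\varepsilon\otimes\id)(a)] = \Cu(\Phi)\big([(t-\varepsilon)_+\otimes a]\big),
\]
so that all remaining questions are governed by the elements $(t-\varepsilon)_+\otimes a$ in $\Cu\big(C_0((0,1])\otimes A\otimes\KK\big)$.

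Since each $\varphi_\varepsilon$ is itself a c.p.c.\ order-zero map, each $f_\varepsilon$ is a generalized \CuMor{} (by the results of \cite{WinZac09CpOrd0} recalled before the statement), so $\pathCu{f}=(f_{1-\lambda})_\lambda$ takes values in $\CatCuMor[\Cu(A),\Cu(B)]$. For the \emph{rapidly increasing} (path) condition from \autoref{dfn:path:path} I must show $f_{\varepsilon'}\prec f_\varepsilon$ whenever $\varepsilon'>\varepsilon$; indeed $\lambda'<\lambda$ corresponds to $1-\lambda'>1-\lambda$. By \autoref{dfn:bivarCu:auxGenMor} and the fact that $\Cu(\Phi)$ preserves $\ll$, it suffices to prove that $[a']\ll[a]$ forces $[(t-\varepsilon')_+\otimes a']\ll[(t-\varepsilon)_+\otimes a]$. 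In the commutative factor one has $(t-\varepsilon')_+\ll(t-\varepsilon)_+$, because the closed support $[\varepsilon',1]$ of $(t-\varepsilon')_+$ is a compact subset of the open support $(\varepsilon,1]$ of $(t-\varepsilon)_+$. Combining $(t-\varepsilon')_+\ll(t-\varepsilon)_+$ with the hypothesis $[a']\ll[a]$ through the canonical $\CatCu$-bimorphism $\Cu(C_0((0,1]))\times\Cu(A\otimes\KK)\to\Cu(C_0((0,1])\otimes A\otimes\KK)$ (which preserves the joint way-below relation; see \cite{AntPerThi14arX:TensorProdCu}) then yields the claim, and hence that $\pathCu{f}$ is a path.

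For the endpoint I would argue directly in the cone. Since $(t-\varepsilon)_+\nearrow t$ uniformly (with $\|t-(t-\varepsilon)_+\|\le\varepsilon$) and $(t-\varepsilon)_+\otimes a\le t\otimes a$, a standard Rørdam-lemma argument gives $\sup_n\big[(t-\tfrac1n)_+\otimes a\big]=[t\otimes a]$ in $\Cu\big(C_0((0,1])\otimes A\otimes\KK\big)$. Applying $\Cu(\Phi)$, which preserves suprema of increasing sequences, I obtain
\[
\sup_{\varepsilon} f_\varepsilon([a]) = \Cu(\Phi)\big([t\otimes a]\big) = [(\varphi\otimes\id)(a)] = \Cu[\varphi]([a]).
\]
Because suprema in $\CatCuMor[\Cu(A),\Cu(B)]$ are computed pointwise, this says exactly that the endpoint of $\pathCu{f}$ is $\Cu[\varphi]$, as desired.

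The main obstacle is the way-below preservation in the tensor product, namely $[(t-\varepsilon')_+\otimes a']\ll[(t-\varepsilon)_+\otimes a]$. The cleanest route is to cite the bimorphism property from \cite{AntPerThi14arX:TensorProdCu}; should a self-contained argument be preferred, one can instead exhibit an explicit Cuntz subequivalence $(t-\varepsilon')_+\otimes a'\precsim\big((t-\varepsilon)_+\otimes a-\kappa\big)_+$, taking $a'\precsim(a-\delta)_+$ and $\kappa=(\varepsilon'-\varepsilon)\delta$ and using the scaling identity $(\lambda b-\kappa)_+=\lambda\,(b-\kappa/\lambda)_+$ for $\lambda>0$. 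Two minor points also need to be recorded along the way: that amplification commutes with the order-zero cut-down (so that $(\varphi\otimes\id)_\varepsilon=\varphi_\varepsilon\otimes\id$ and the displayed key identity holds), and the nuclearity of $C_0((0,1])$ ensuring $\Phi$ is a well-defined \stHom.
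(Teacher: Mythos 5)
Your proof is correct, but it takes a genuinely different route from the paper's. The paper argues entirely inside $B$: using \autoref{prp:appl:oz:charWZ} it writes $\varphi_\varepsilon(a)=(h-\varepsilon)_+\pi_\varphi(a)$, reduces via the cut-down identity $f_{\varepsilon+\delta}=(f_\varepsilon)_\delta$ to the single claim $f_\varepsilon\prec f$, and proves that claim by choosing $\delta>0$ with $[a]\leq[(b-\delta)_+]$ and invoking the elementary inequality $(x-\varepsilon)_+(y-\delta)_+\leq(xy-\varepsilon\delta)_+$ for commuting positive elements, which yields $\varphi_\varepsilon(a)\precsim(\varphi(b)-\varepsilon\delta)_+$; the endpoint statement is then just the norm convergence $\varphi_\varepsilon(a)\to\varphi(a)$. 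You instead transport everything to the cone via the Winter--Zacharias correspondence $\rho\colon C_0((0,1])\otimes A\to B$, so that $f_\varepsilon([a])=\Cu(\rho)\big([(t-\varepsilon)_+\otimes a]\big)$, and then outsource the two nontrivial points to general machinery: way-below preservation to the canonical $\CatCu$-bimorphism $\Cu(C_0((0,1]))\times\Cu(A)\to\Cu(C_0((0,1])\otimes A)$ of \cite{AntPerThi14arX:TensorProdCu} (together with the exact computation $(t-\varepsilon')_+=\big((t-\varepsilon)_+-(\varepsilon'-\varepsilon)\big)_+$, which gives $[(t-\varepsilon')_+]\ll[(t-\varepsilon)_+]$), and the endpoint to the fact that $\Cu(\rho)$ preserves suprema of increasing sequences combined with $\sup_\varepsilon[(t-\varepsilon)_+\otimes a]=[t\otimes a]$. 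It is worth noting that the joint-$\ll$ preservation of the cited bimorphism is itself proved by essentially the same commuting-positives inequality, so the two arguments share the same computational kernel; indeed your self-contained fallback subequivalence is that inequality in disguise. What your version buys is modularity and conceptual clarity --- path-ness and the endpoint become formal consequences of functoriality of $\Cu$ once the key identity is established --- and it would adapt directly to variants of the statement (e.g.\ other functional-calculus cut-downs $f(\varphi)$, replacing $(t-\varepsilon)_+$ by $f$). What the paper's version buys is self-containedness: it needs no tensor products of \ca{s} and no results about $\Cu$ of tensor products, only the structure theorem. Your bookkeeping points (nuclearity of $C_0((0,1])$, and the identity $(\varphi_\varepsilon\otimes\id)=(\rho\otimes\id)((t-\varepsilon)_+\otimes\freeVar)$ checked on elementary tensors and extended by linearity and continuity) are exactly the right ones, and both are fine as stated.
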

\begin{proof}
We have already observed that every $f_\varepsilon$ is a generalized $\CatCu$-morphism.
To verify that $(f_{1-\lambda})_\lambda$ is a path, we need to show that $f_{\varepsilon'}\prec f_\varepsilon$ for $\varepsilon'>\varepsilon>0$.
Since $f_{\varepsilon+\delta}=(f_{\varepsilon})_{\delta}$, it is enough to show the following:

Claim:
We have $f_\varepsilon\prec f$.
To show the claim, let $a,b\in(A\otimes\KK)_+$ such that $[a]\ll[b]$ in $\Cu(A)$.
Recall that two positive elements $x$ and $y$ in a \ca{} satisfy $[x]\ll[y]$ if and only if there exists $\delta>0$ with $[x]\leq[(y-\delta)_+]$.
Thus, we can choose $\delta>0$ such that $[a]\leq[(b-\delta)_+]$.
Note that if $x$ and $y$ are commuting positive elements in a \ca{}, then $(x-\varepsilon)_+(y-\delta)_+\leq (xy-\varepsilon\delta)_+$.
Using this at the last step, we deduce that
\begin{align*}
\varphi_\varepsilon(a)
\precsim \varphi_\varepsilon((b-\delta)_+)
&= (h-\varepsilon)_+\pi_\varphi((b-\delta)_+) \\
&= (h-\varepsilon)_+ (\pi_\varphi(b)-\delta)_+
\leq (h \pi_\varphi(b)-\varepsilon\delta)_+
= (\varphi(b)-\varepsilon\delta)_+,
\end{align*}
which implies that
\[
f_\varepsilon([a]) = [\varphi_\varepsilon(a)] \ll [\varphi(b)] = f([b]),
\]
as desired.
This proves the claim and shows that $\pathCu{f}$ is a path.

Let $f$ be the generalized $\CatCu$-morphism induced by $\varphi$.
To show that the endpoint of $\pathCu{f}$ is $f$, let $a\in(A\otimes\KK)_+$.
We have
\[
\lim_{\lambda\to 1} \varphi_{1-\lambda}(a)
= \lim_{\varepsilon\to 0} \varphi_\varepsilon(a)
= \lim_{\varepsilon\to 0} (h-\varepsilon)_+\pi_\varphi(a)
= h\pi_\varphi(a)
= \varphi(a).
\]
This implies that $\sup_{\lambda<1}f_\lambda([a])=f([a])$ in $\Cu(A)$, as desired.
\end{proof}

\begin{dfn}
\label{dfn:appl:oz:oz_ind_ihom}
Let $A$ and $B$ be \ca{s}, and let $\varphi\colon A\to B$ be a c.p.c.\ order-zero map.
We let $\Cu(\varphi)$ be the element in $\ihom{\Cu(A),\Cu(B)}$ that is the class of the path  $(\Cu[\varphi_{1-\lambda}])_\lambda$ as constructed in \autoref{prp:appl:oz:oz_ind_ihom}.
\end{dfn}

\begin{rmk}
\label{rmk:appl:oz:oz_ind_ihom}
Let $\varphi\colon A\to B$ be a \stHom.
In the definition of the functor $\Cu\colon\CatCa\to\CatCu$ we denoted $\Cu(\varphi)$ as the $\CatCu$-morphism $\Cu(A)\to\Cu(B)$ given by $\Cu(\varphi)([a])=[(\varphi\otimes\id)(a)]$ for $a\in (A\otimes\KK)_+$.

On the other hand, in \autoref{dfn:appl:oz:oz_ind_ihom} we defined $\Cu(\varphi)$ as the class of the path $(\Cu[\varphi_{1-\lambda}])_\lambda$ as constructed in \autoref{prp:appl:oz:oz_ind_ihom}.
Given $\varepsilon>0$, it is easy to verify that $\varphi_\varepsilon=(1-\varepsilon)_+\varphi$.
It follows that $\Cu[\varphi_\varepsilon]=\Cu[\varphi]$ for $\varepsilon\in[0,1)$.
Thus, the path $(\Cu[\varphi_{1-\lambda}])_\lambda$ is constant with value $\CatCu[\varphi]$.

We identify a $\CatCu$-morphism $f\colon\Cu(A)\to\Cu(B)$ with the compact element in $\ihom{\Cu(A),\Cu(B)}$ given by the constant path with value $f$;
see \autoref{prp:bivarCu:cpctElt}.
It follows that the notation $\CatCu(\varphi)$ for a \stHom{} $\varphi$ is unambiguous.
\end{rmk}

\begin{pgr}
\label{pgr:appl:cpcToIhom}
The functor $\CatCa\to\CatCu$ defines a map
\[
\CatCu\colon\Hom(A,B) \to \CatCu(\Cu(A),\CatCu(B)).
\]
By \autoref{dfn:appl:oz:oz_ind_ihom} we obtain a well-defined map
\[
\cpc{A,B} \to \ihom{\Cu(A),\Cu(B)}.
\]
As noticed in \autoref{rmk:appl:oz:oz_ind_ihom}, these assignemnts are compatible, which means that the following diagram commutes:
\[
\xymatrix{
\cpc{A,B} \ar[r]^-{\CatCu} & \ihom{\Cu(A),\Cu(B)} \\
\Hom(A,B) \ar[r]^-{\CatCu} \ar@{^{(}->}[u] & \CatCu(\Cu(A),\CatCu(B)) \ar@{^{(}->}[u] .
}
\]
\end{pgr}

\begin{pbm}
\label{pbm:appl:cpcToIhom}
Study the properties of the map $\cpc{A,B} \to \ihom{\Cu(A),\Cu(B)}$.
In particular, when is this map surjective?
\end{pbm}

\begin{exa}
\label{exa:appl:cpcWW}
Recall that $\mathcal{W}$ denotes the Jacelon-Razak algebra.
We know that $\Cu(\mathcal{W})\cong\PPbar$.
By \autoref{prp:bivarCu:ihomPP}, we have $\ihom{\PPbar,\PPbar}\cong M_1$, and recall that $M_1=[0,\infty)\sqcup (0,\infty]$.
We claim that the map
\[
\cpc{\mathcal{W},\mathcal{W}} \to \ihom{\Cu(\mathcal{W}),\Cu(\mathcal{W})} \cong \ihom{\PPbar,\PPbar}\cong M_1
\]
is surjective.

The idea is to choose a unital, simple, AF-algebra $A$ with unique tracial state and a suitable element $x\in (A\otimes\KK)_+$ and consider the map $\mathcal{W}\to\mathcal{W}\otimes A$, given by $y\mapsto y\otimes x$, followed by a ${}^*$-isomorphism $\mathcal{W}\otimes A\cong \mathcal{W}$.

Let $A$ be a unital, simple AF-algebra with unique tracial state.
We claim that $\mathcal{W}\otimes A\cong\mathcal{W}$.
By construction, $\mathcal{W}$ is an inductive limit of the building blocks considered by Razak in \cite{Raz02ClassifProjectionless}.
Since $A$ is an AF-algebra, $\mathcal{W}\otimes A$ is an inductive limit of Razak building blocks as well.
Since $A$ is simple and has a unique tracial state, $\mathcal{W}$ and $\mathcal{W}\otimes A$ have the same invariant used for the classification \cite[Theorem~1.1]{Raz02ClassifProjectionless}, which gives the desired ${}^*$-isomorphism $\mathcal{W}\otimes A\cong\mathcal{W}$.

Given $a\in M_1$, let us define a c.p.c.\ order-zero map $\mathcal{W}\to\mathcal{W}$ corresponding to $a$.
We distinguish two cases:

Case~1:
Assume that $a$ is nonzero and soft.
Let $\mathcal{U}$ denote the universal UHF-algebra.
We have $\Cu(\mathcal{U})\cong \QQ_+ \sqcup (0,\infty]$.
We consider $a$ as a soft element in $\Cu(\mathcal{U})_\txtSoft = [0,\infty]$.
Choose $x_a\in(\mathcal{U}\otimes\KK)_+$ with Cuntz class $a$.
(For example, let $x_a$ be a positive element with spectrum $[0,1]$ - ensuring that its Cuntz class is soft - and such that for the unique normalized extended trace $\tau\colon (\mathcal{U}\otimes\KK)_+\to[0,\infty]$ we have $\lim_{n\to\infty} \tau(x_a^{1/n}) = a$.)

Consider the map $\varphi_a\colon\mathcal{W}\to\mathcal{W}\otimes\mathcal{U}$ given by $\varphi_a(y)=y\otimes x_a$ for $y\in\mathcal{W}$.
It is easy to see that $\varphi_a$ is a c.p.c.\ order-zero map.
Let $\psi\colon \mathcal{W}\otimes\mathcal{U}\to\mathcal{W}$ be an isomorphism.
Then $\psi\circ\varphi_a$ is a c.p.c.\ order-zero map $\mathcal{W}\to\mathcal{W}$ with the desired properties.

Case 2: Assume that $a$ is compact.
We claim that there exists a unital, simple AF-algebra $A$ with unique normalized trace $\tau\colon(A\otimes\KK)_+\to[0,\infty]$ and a projection $p_a\in(A\otimes\KK)_+$ with $\tau(p_a)=a$.
Indeed, if $a$ is rational, then we can take $A=\mathcal{U}$.
If $a$ is irrational, then we use that $\ZZ+a\ZZ$ is a dimension group for the order and addition inherited as a subgroup of $\RR$.
Moreover, $\ZZ+a\ZZ$ has a unique normalized state.
It follows that there is a unique unital AF-algebra $A$ such that $(K_0(A),K_0(A)_+,[1])$ is isomorphic to $(\ZZ+a\ZZ,(\ZZ+a\ZZ)\cap[0,\infty),1)$.
By construction, there exists a projection $p_a\in A\otimes\KK $ with $\tau(p_a)=a$.

Define $\varphi_a\colon\mathcal{W}\to\mathcal{W}\otimes A$ by $\varphi_a(y)=y\otimes p_a$ for $y\in\mathcal{W}$.
Then $\varphi_a$ is a \stHom.
Postcomposing with a ${}^*$-isomorphism $\mathcal{W}\otimes R_\theta\cong\mathcal{W}$, we obtain a \stHom{} $\mathcal{W}\to\mathcal{W}$ with the desired properties.
\end{exa}

\begin{exa}
\label{exa:appl:cpcToIhom}
With similar methods as in \autoref{exa:appl:cpcWW}, one can show that the map $\cpc{A,B} \to \ihom{\Cu(A),\Cu(B)}$ is surjective whenever $A$ and $B$ are any of the following \ca{s}:
a UHF-algebra of infinite type, the Jiang-Su algebra, the Jacelon-Razak algebra $\mathcal{W}$.
\end{exa}

\begin{rmk}
In \cite[Definition~2.27]{BosTorZac16arX:BivarThyCu}, Bosa, Tornetta and Zacharias introduced a bivariant Cuntz semigroup, denoted $\mathop{WW}(A,B)$, as suitable equivalence classes of c.p.c.\ order-zero maps $A\otimes\KK\to B\otimes\KK$.
It would be interesting to study if the map from \autoref{pbm:appl:cpcToIhom} factors through $\mathop{WW}(A,B)$, that is, if the following diagram can be completed to be commutative:
\[
\xymatrix{
\cpc{A,B} \ar[r] \ar[d] & \ihom{\Cu(A),\Cu(B)} \\
\mathop{WW}(A,B) \ar@{-->}[ur]
}.
\]
Observe that, in order for this to be satisfied, one needs to show that, given $\varphi$ and $\psi$ in $\cpc{A,B}$ such that $\varphi\precsim\psi$ in the sense of \cite{BosTorZac16arX:BivarThyCu} then, for $\epsilon>0$, there is $\delta>0$ such that $\Cu[\varphi_{1-\epsilon}]\prec\Cu[\psi_{1-\delta}]$.
\end{rmk}



\providecommand{\etalchar}[1]{$^{#1}$}
\providecommand{\bysame}{\leavevmode\hbox to3em{\hrulefill}\thinspace}
\providecommand{\noopsort}[1]{}
\providecommand{\mr}[1]{\href{http://www.ams.org/mathscinet-geNovember 21, 2018.
		2010
		Mathematics Subject Classification.
		Primary 06B35, 06F05, 15A69, 46L05. Secondary
		06B30, 06F25, 13J25, 16W80, 16Y60, 18B35, 18D20, 19K14, 46L
		06, 46M15, 54F05.
		Key words and phrases.
		Cuntz semigroup, tensor product, continuous postitem?mr=#1}{MR~#1}}
\providecommand{\zbl}[1]{\href{http://www.zentralblatt-math.org/zmath/en/search/?q=an:#1}{Zbl~#1}}
\providecommand{\jfm}[1]{\href{http://www.emis.de/cgi-bin/JFM-item?#1}{JFM~#1}}
\providecommand{\arxiv}[1]{\href{http://www.arxiv.org/abs/#1}{arXiv~#1}}
\providecommand{\doi}[1]{\url{http://dx.doi.org/#1}}
\providecommand{\MR}{\relax\ifhmode\unskip\space\fi MR }
\providecommand{\MRhref}[2]{%
  \href{http://www.ams.org/mathscinet-getitem?mr=#1}{#2}
}
\providecommand{\href}[2]{#2}

\end{document}